\theoremstyle{plain}
\newtheorem{thm}{Theorem}[section]
\theoremstyle{definition}
\newtheorem{defn}[thm]{Definition}
\theoremstyle{definition}
\newtheorem{example}[thm]{Example}
\theoremstyle{plain}
\newtheorem{lem}[thm]{Lemma}
\theoremstyle{plain}
\newtheorem{prop}[thm]{Proposition}
\theoremstyle{remark}
\newtheorem{notation}[thm]{Notation}
\begin{document}

\title{$C^{*}$-Algebra Relations}

\author{Terry A. Loring}

\address{Department of Mathematics and Statistics, University of New Mexico,
Albuquerque, NM 87131, USA.}

\urladdr{http://www.math.unm.edu/\textasciitilde{}loring}

\begin{abstract}
We investigate relations on elements in $C^{*}$-algebras, including
$*$-polynomial relations, order relations and all relations that
correspond to universal $C^{*}$-algebras. We call these $C^{*}$-relations
and define them axiomatically. Within these are the compact $C^{*}$-relations,
which are those that determine universal $C^{*}$-algebras, and we
introduce the more flexible concept of a closed $C^{*}$-relation. 

In the case of a finite set of generators, we show that
closed $C^{*}$-relations
correspond to the zero-sets of elements in a free $\sigma$-$C^{*}$-algebra.
This provides a solid link between two of the previous theories on
relations in $C^{*}$-algebras.

Applications to lifting problems are briefly considered in the last
section.
\end{abstract}

\maketitle

\section{Introduction}

In the contexts of operator inequalities, lifting problems, $K$-theory
and universal $C^{*}$-algebras, the need arises for relations on
an element $x$ in a $C^{*}$-algebra $A$ that that are best described
in terms of the $\mathbf{M}_{2}(A).$ An example is the relation 
\[
0\leq\left[\begin{array}{cc}
\left|x\right| & x^{*}\\
x & \left|x\right|\end{array}\right]\leq1
\]
on $x.$  We also need relations such as
\[
\left[\begin{array}{cc}
x_{11} & x_{12}\\
x_{21} & x_{22}\end{array}\right]^{2}=\left[\begin{array}{cc}
x_{11} & x_{12}\\
x_{21} & x_{22}\end{array}\right]
\]
that give do not determine universal $C^{*}$-algebras. 

The variety of relations that arise in operator theory is impressive.
In \cite{LoringFromMatrixToOperator} we study questions about operators
that can be reduced to questions about matrices. The relations that
arise include 
\[
\alpha\leq e^{x+x^{*}}\leq\beta
\]
and
\[
\left\Vert y\sqrt{\left|x\right|}-\sqrt{\left|x\right|}y\right\Vert \leq\delta.
\]
This example-rich environment will support a general theory, a theory
of $C^{*}$-relations. 

Two existing theories are compelling: that of Phillips in
\cite[\S 1.3]{PhillipsProCstarAlg}
and that of Hadwin, Kaonga and Mathes in \cite[\S 6]{Hadwin-Kaonga-Mathes}.
The allowed class of relations is, for our purposes, too large in
first instance, too small in the second. Our compromise is an axiomatic
approach that is more restrictive than allowed by Phillips. In the
case of finitely many generators, we can show that a subset of these
relations are equivalent to relations in the same basic form as considered
by Hadwin et al.

The lack of free $C^{*}$-algebras forces us to consider pro-$C^{*}$-algebras.
For background on this class of $*$-algebras, see \cite{Fragoulopoulou}
or \cite{PhillipsProCstarAlg}.

Another name for a pro-$C^{*}$-algebra is locally-$C^{*}$-algebra.
A pro-$C^{*}$-algebra is a topological $*$-algebra whose topology
arises from, and is complete with respect to, a set of $C^{*}$-seminorms.
Those seminorms are not part of the object in this category. The
morphisms are all continuous $*$-homomorphisms.

This terminology is in conflict with Grothendieck's notion of a pro-category
(\cite[p.\  4]{AtiyahSegalCompletion}). The conflict is slight, as continuous
$*$-homomorphisms give rise to families of $*$-homomorphisms between
$C^{*}$-algebras, as in Lemma~\ref{lem:proMorphismPicture}.

When a pro-$C^{*}$-algebra has a topology described by a sequence
of $C^{*}$-seminorms, it is metrizable and called a $\sigma$-$C^{*}$-algebra. 

The free pro-$C^{*}$-algebras $\mathbb{F}\langle x_{1},\ldots,x_{n}\rangle$
are $\sigma$-$C^{*}$-algebras. They contain in a nice way the $*$-polynomials
in finitely many noncommuting variables. The elements of
$\mathbb{F}\langle x_{1},\ldots,x_{n}\rangle$
are the noncommutative functions of Hadwin, Kaonga and Mathes, and
their zero sets provide a rich class of $C^{*}$-algebra relations.

There is a lot of confusion in the definition of a
relation for $C^{*}$-algebras,
mostly arising from the fact that free $C^{*}$-algebras do not exist
(except on zero generators). We cannot simply define the relations
as being elements of the free object that have been set to zero. The
free object we can access is in the wrong category, and is not easily
understood as it arises from completion with respect to a uncomputable
sequence of seminorms.

We can define a relation as a ``statement about elements in a $C^{*}$-algebra,''
but must take care. It is easy to have hidden ideas of what statements
are allowed. We only need to know the class of functions 
$f:\mathcal{X}\rightarrow A$
that are to be representations of a relation, so we work directly
with categories whose objects are functions from sets to $C^{*}$-algebras.

The statement 
\[
0\leq a_{1}\leq a_{2}\leq1
\]
is to be thought of as shorthand for the category whose objects are
functions
\[
f:\{ x_{1},x_{2}\}\rightarrow A
\]
for which
\[
0\leq f(x_{1})\leq f(x_{2})\leq1
\]
and whose morphisms are intertwining $*$-homomorphisms. The desired
universal representation
\[
\iota: \{ x_{1},x_{2}\} \rightarrow 
C^{*}\left\langle
x_{1},x_{2}\left|\,0\leq x_{1}\leq x_{2}\leq1\right.
\right\rangle 
\]
is the initial object in that category.

\section{$C^{*}$-Algebra Relations 
\label{sec:C^{*}-Algebra-Relations}}

We identify within a general class of relations those that correspond
to universal $C^{*}$-algebras.

\begin{defn}
Given a set $\mathcal{X},$ the \emph{null} $C^{*}$\emph{-relation}
on $\mathcal{X}$ is the category $\mathcal{F}_{\mathcal{X}}$ with
objects of the form $(j,A),$ where $A$ is a $C^{*}$-algebra and
$j:\mathcal{X}\rightarrow A$ is a function.  The morphisms from
$(j,A)$ to $(k,B)$ all $*$-homomorphisms $\varphi:A\rightarrow B$
for which $\varphi\circ j=k.$
\end{defn}

Given any nonempty set $\Lambda$ and $C^{*}$-algebras $A_{\lambda}$ for
$\lambda\in\Lambda,$ we use one of
\[
\prod_{\lambda\in\Lambda}A_{\lambda}
\mbox{ or }
\prod_{\lambda\in\Lambda}^{C^{*}}A_{\lambda}
\]
to denote the $C^{*}$-algebra of families 
$\left\langle a_{\lambda}\right\rangle _{\lambda\in\Lambda}$
that are bounded in norm and have $a_{\lambda}\in A_{\lambda}.$ 

\begin{defn}
Given a set $\mathcal{X},$ a \emph{$C^{*}$-relation on $\mathcal{X}$}
is a full subcategory $\mathcal{R}$ of $\mathcal{F}_{X}$ of such
that:
\begin{description}
\item [{C1}] 
the unique map $\mathcal{X}\rightarrow\{0\}$ is an object;
\item [{C2}] 
if $\varphi:A\hookrightarrow B$ is an injective $*$-homomorphism
and $f:\mathcal{X}\rightarrow A$ is a function, then
\[
f\mbox{ is an object }\Leftarrow\varphi\circ f\mbox{ is an object;}
\]
\item [{C3}] 
if $\varphi:A\rightarrow B$ is a $*$-homomorphism and 
$f:\mathcal{X}\rightarrow A$
is a function, then
\[
f\mbox{ is an object }\Rightarrow\varphi\circ f\mbox{ is an object;}
\]
\item [{C4f}] 
if $f_{j}:\mathcal{X}\rightarrow A_{j}$ is an object for
$1\leq j\leq n$ then 
\[
\prod f_{j}:\mathcal{X}\rightarrow\prod_{j=1}^{n}A_{j}
\]
is an object.
\end{description}
\end{defn}

The admissible relations defined in \cite{PhillipsProCstarAlg} are
only required to satisfy \textbf{C3} in the case where $\varphi$
is a surjection. Such a relation can be extended to a $C^{*}$-relation
by adding in any push-forward by an inclusion.

The \emph{intersection} of two or more $C^{*}$-relations on the same
set $\mathcal{X}$ will be the full subcategory whose objects are the 
$f:\mathcal{X} \rightarrow A$ that are representations of all the
given relations.  This intersection is again a $C^{*}$-relation.

We will generally not mention the morphisms as they are determined
by the objects.

\begin{defn}
The \emph{$C^{*}$}-relation $\mathcal{R}$ on \emph{$\mathcal{X}$}
is called \emph{compact} if
\begin{description}
\item [{C4}] 
for any nonempty set $\Lambda,$ if 
$f_{\lambda}:\mathcal{X}\rightarrow A_{\lambda}$
is an object for all $\lambda\in\Lambda$ then 
\[
\prod f_{\lambda}:\mathcal{X} \rightarrow
\prod_{\lambda\in\Lambda}A_{\lambda}
\]
is an object.
\end{description}
\end{defn}

\begin{example}
Let $\mathcal{R}$ be the subcategory of $\mathcal{F}_{\emptyset}$
whose only object is the unique function from $\emptyset$ to the
zero $C^{*}$-algebra. This is satisfies \textbf{C1}, \textbf{C2}
and \textbf{C4} but only the weaker form of \textbf{C3} where $\varphi$
is only allowed to be a surjection. 
\end{example}

Usually we will have a statement that determines the objects in a
$C^{*}$-relation. We will call this statement a $C^{*}$-relation
and the objects in the associated category \emph{representations of
the relation.} If we start with $\mathcal{R}$ we can use
\[
f:\mathcal{X}\rightarrow A\mbox{ is an object in }\mathcal{R}
\]
as a relation whose representations are the objects in $\mathcal{R}.$
For this reason, we generally call an object a representation.

\begin{example}
If $p$ is a noncommutative $*$-polynomial in $n$ variables with
zero constant term then
\[
p(x_1,\ldots,x_n)=0
\]
is a $C^*$-relation.
\end{example}

\begin{example}
The $C^*$-relation
\[
x^*x - x = 0
\]
is compact, since $x^*x=x$ implies $x^*=x$ and so $x$ is a projection, and
so has norm at most one.
\end{example}

\begin{example}
The $C^*$-relation
\[
x^2 - x = 0
\]
is a not compact, as idempotents can have any norm.
\end{example}

\begin{example}
Consider the relation determined by the equation
\[
xy - yx - 1 = 0,
\]
where if $x$ and $y$ are in $A$ then this relation holds if $A$
is unital and $xy-yx$ equals the unit in $A.$
If we allow the case $1=0$ in the zero $C^*$-algebra then
\textbf{C3} will fail.  If we exclude this case, then
\textbf{C1} will fail.  Either way, we do not obtain a $C^{*}$-relation.
\end{example}

For any $C^{*}$-algebra $A,$ let
\[
\mbox{rep}_{\mathcal{R}}(\mathcal{X},A)
=
\left\{ 
f:\mathcal{X}\rightarrow A
\left|
\ f\mbox{ is a representation of }\mathcal{R}
\right.
\right\} .
\]

\begin{defn}
If $\mathcal{X}$ is a set and $\mathcal{R}$ is a $C^{*}$-relation
on $\mathcal{X}$ then a function $\iota:\mathcal{X}\rightarrow U$
from $\mathcal{X}$ to a $C^{*}$-algebra $U$ is \emph{universal
for $\mathcal{R}$} if:
\begin{description}
\item [{U1}] 
given a $C^{*}$-algebra $A,$ if 
$\varphi:U\rightarrow A$ is a $*$-homomorphism
then $\varphi\circ\iota:\mathcal{X}\rightarrow A$
is a representation of $\mathcal{R};$ 
\item [{U2}] 
given a $C^{*}$-algebra $A,$ if a function $f:\mathcal{X}\rightarrow A$
is a representation of $\mathcal{R}$ then there is a unique $*$-homomorphism
$\varphi:U\rightarrow A$ so that $f=\varphi\circ\iota.$
\end{description}
\end{defn}

It should be clear that $\iota$ and $U$ are unique up to isomorphism.
Notice that $\iota$ must be a representation. The definition of a
universal representation is summarized by the bijection
\[
\hom(U,A)\rightarrow\mbox{rep}_{\mathcal{R}}(\mathcal{X},A)
\]
defined by $\varphi\mapsto\varphi\circ\iota.$

Notice that \textbf{U1} is absent in \cite[\S 1.3]{PhillipsProCstarAlg}.
See \cite[Definition 1.2]{Blackadar-shape-theory}.

Various versions of Theorem~\ref{thm:repsIFFuniversalExists} can
be found in \cite[\S 1.4]{Hadwin-Kaonga-Mathes},
\cite[\S 3.1]{Loring-lifting-perturbing}
and \cite[Proposition 1.3.6]{PhillipsProCstarAlg}.
The proof here uses the same techniques
as Hadwin and Ma in \cite[\S 2]{HadwinMaFreeProducts}.

\begin{thm}
\label{thm:repsIFFuniversalExists} 
If $\mathcal{R}$ is $C^{*}$-relation
on $\mathcal{X}$ then $\mathcal{R}$ is compact if and only if
there exists a universal representation for $\mathcal{R}.$ 
\end{thm}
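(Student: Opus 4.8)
The plan is to prove each implication separately. For the easy direction, suppose a universal representation $\iota:\mathcal{X}\rightarrow U$ exists. To check \textbf{C4}, let $f_\lambda:\mathcal{X}\rightarrow A_\lambda$ be representations for $\lambda\in\Lambda$. By \textbf{U2} each $f_\lambda = \varphi_\lambda\circ\iota$ for a $*$-homomorphism $\varphi_\lambda:U\rightarrow A_\lambda$. I would then form the product $*$-homomorphism $\Phi:U\rightarrow\prod_{\lambda\in\Lambda}A_\lambda$, noting it lands in the bounded product since every element of $U$ has $\|\varphi_\lambda(u)\|\le\|u\|$. Since $\prod f_\lambda = \Phi\circ\iota$ and $\iota$ is a representation (it equals $\mathrm{id}_U\circ\iota$, so \textbf{U1} applies), axiom \textbf{C3} gives that $\prod f_\lambda$ is a representation. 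Hence $\mathcal{R}$ is compact.

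For the hard direction, assume $\mathcal{R}$ is compact and construct $U$. The standard move is to take a set-theoretically large enough family of representations and form their product. Concretely, fix an infinite cardinal $\kappa\ge|\mathcal{X}|+\aleph_0$ and let $\Lambda$ index a set of representatives, up to isomorphism, of all representations $f_\lambda:\mathcal{X}\rightarrow A_\lambda$ in which $A_\lambda$ is generated (as a $C^*$-algebra) by $f_\lambda(\mathcal{X})$ and has density character at most $\kappa$; one checks $\Lambda$ can be taken to be a set. By \textbf{C4}, $g:=\prod_\lambda f_\lambda:\mathcal{X}\rightarrow\prod_\lambda A_\lambda$ is a representation. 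Let $U$ be the $C^*$-subalgebra of $\prod_\lambda A_\lambda$ generated by $g(\mathcal{X})$, and let $\iota:\mathcal{X}\rightarrow U$ be the corestriction of $g$; by \textbf{C2} applied to the inclusion $U\hookrightarrow\prod_\lambda A_\lambda$, $\iota$ is a representation. Property \textbf{U1} then follows from \textbf{C3}. For \textbf{U2}: given a representation $f:\mathcal{X}\rightarrow A$, replace $A$ by the subalgebra generated by $f(\mathcal{X})$ (still a representation by \textbf{C2}); this cut-down $f$ is isomorphic to some $f_{\lambda_0}$, so the coordinate projection $\pi_{\lambda_0}$ restricted to $U$, composed with that isomorphism, yields a $*$-homomorphism $\varphi:U\rightarrow A$ with $\varphi\circ\iota=f$; uniqueness is immediate since $\iota(\mathcal{X})$ generates $U$.

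The main obstacle is the set-theoretic bookkeeping in the hard direction: one must argue that "all representations generated by the image of $\mathcal{X}$, up to isomorphism" really forms a set, and that the density-character bound $\kappa$ is large enough that every representation $f:\mathcal{X}\rightarrow A$ restricts to one of bounded density character — the point being that the subalgebra generated by $f(\mathcal{X})$ has density character at most $|\mathcal{X}|+\aleph_0\le\kappa$, since $*$-polynomials with rational-complex coefficients in the elements of $f(\mathcal{X})$ are dense. Once that counting is pinned down, each axiom \textbf{C2}, \textbf{C3}, \textbf{C4} is used exactly once and the verification of \textbf{U1} and \textbf{U2} is routine. A secondary point worth stating carefully is that the product $\prod_\lambda A_\lambda$ used here is the bounded ($C^*$) product, so that norms do not blow up; this is exactly why \textbf{C4} is phrased with that product and why compactness, rather than merely the finite version \textbf{C4f}, is the right hypothesis.
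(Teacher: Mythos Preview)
Your proposal is correct and follows essentially the same strategy as the paper: for the forward direction both form the product $*$-homomorphism $\prod\varphi_\lambda$ and invoke \textbf{U1}/\textbf{C3}; for the converse both collect a \emph{set} of representatives of representations with generating image, take their $C^*$-product via \textbf{C4}, and let $U$ be the subalgebra generated by the image (the paper bounds cardinality of the underlying set where you bound density character, a purely cosmetic difference). One small correction: you write that ``each axiom \textbf{C2}, \textbf{C3}, \textbf{C4} is used exactly once,'' but \textbf{C1} is also needed---it ensures $\Lambda\neq\emptyset$ (the zero map $\mathcal{X}\to\{0\}$ furnishes a representative), which is a hypothesis of \textbf{C4}; the paper makes this step explicit.
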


\begin{proof}
Assume such a universal representation $f:\mathcal{X}\rightarrow U$
exists. We need to verify \textbf{C4.}

Suppose $\Lambda$ is a nonempty set and 
$f_{\lambda}:\mathcal{X}\rightarrow A_{\lambda}$
is a representation for each $\lambda\in\Lambda.$ For each $\lambda$
we know there exists a 
$*$-homomorphism $\varphi_{\lambda}:U\rightarrow A_{\lambda}$
with $f_{\lambda}=\varphi_{\lambda}\circ\iota.$ Since
\[
\prod f_{\lambda}=\left(\prod\varphi_{\lambda}\right)\circ\iota
\]
we have proven \textbf{C4}.

As to the converse, assume $\mathcal{R}$ is a compact $C^{*}$-relation
on $\mathcal{X}.$ 

Let $S_{1}$ be a set such that every $C^{*}$-algebra generated by
a set no larger than $\mathcal{X}$ has cardinality at most the cardinality
of $S_{1}.$ Let $S_{2}$ be the set of all $C^{*}$-algebras whose
underlying set is a subset of $S_{1}.$ Let $S_{3}$ be the set of
all functions from $\mathcal{X}$ to a $C^{*}$-algebra in $S_{2}.$
Let $S_{4}$ be the set containing every function $f:\mathcal{X}\rightarrow A$
in $S_{3}$ whose image $f(\mathcal{X})$ generates $A$ and so that
$f$ is a representation in $\mathcal{R}.$ Let these representations
be indexed as $f_{\lambda}:\mathcal{X}\rightarrow A_{\lambda}$ for
$\lambda$ in a set \emph{}$\Lambda.$ 

Given any representation $g:\mathcal{X}\rightarrow B,$ by \textbf{C2}
we know that by corestricting we can factor $g$ as $g=\alpha\circ g_{0}$
where $g_{0}:\mathcal{X}\rightarrow B_{0}$ has image that generates
$B_{0}$ and $\alpha:B_{0}\rightarrow B$ is an inclusion. There will
be an isomorphism $\beta:B_{0}\rightarrow B_{1}$ for some $B_{1}$
in $S_{2}.$ Let $g_{1}:\mathcal{X}\rightarrow B_{1}$ be defined
as $g_{1}=\beta\circ g_{0}.$ This will be a representation by \textbf{C3},
with generating image, and so $g_{1}=f_{\lambda}$ and $B_{1}=A_{\lambda}$
for some $\lambda$ in $\Lambda.$ Thus $g$ factors as 
$g=\gamma\circ f_{\lambda}$
where $g:A_{\lambda}\rightarrow B$ is the injective $*$-homomorphism
$g=\alpha\circ\beta^{-1}.$

To summarize the last paragraph: every representation $g$ in $\mathcal{R}$
can be factored as
$g=\varphi\circ f_{\lambda}$ where $\varphi:A_{\lambda}\rightarrow B$
is an injective $*$-homomorphism. 

By \textbf{C1} there is a representation, so we know $\Lambda\neq\emptyset.$

Let
\[
f=\prod_{\lambda\in\Lambda}f_{\lambda}   :  
\mathcal{X}\rightarrow\prod_{\lambda\in\Lambda}A_{\lambda}.
\]
This is well defined and a representation by \textbf{C4}. Let $U$
denote the $C^{*}$-algebra generated by the image of $f$ and let
$\iota:\mathcal{X}\rightarrow U$ be the corestriction of $f.$ The
inclusion of $U$ in the product we call $\eta,$ so $f=\eta\circ\iota.$ 

Suppose $\varphi:U\rightarrow A$ is a $*$-homomorphism.  Since
$\iota$ is a representation, \textbf{C3} tells us that $\varphi\circ\iota$
is also a representation. We have meet the first requirement on $U.$ 

Suppose $B$ is a $C^{*}$-algebra and that 
a function $g:\mathcal{X}\rightarrow B$
is a representation in $\mathcal{R}.$ We can factor $g$ as 
$g=\varphi\circ f_{\lambda_{0}}$
where $\varphi:A_{\lambda_{0}}\rightarrow B$ is an 
injective $*$-homomorphism.
Let $p_{\lambda_{0}}$ denote the coordinate projection
\[
p_{\lambda_{0}} : 
\prod_{\lambda\in\Lambda}A_{\lambda}\rightarrow A_{\lambda_{0}}.
\]
Define $\psi:U\rightarrow B$ as the $*$-homomorphism 
$\psi=\varphi\circ p_{\lambda_{0}}\circ\eta.$
Then
\[
\psi\circ\iota = \varphi\circ p_{\lambda_{0}}\circ f
= \varphi\circ f_{\lambda_{0}} = g.
\]
Since $\iota$ has range that generates $U,$ the $*$-homomorphism
$\psi$ is the unique one satisfying $\psi\circ\iota=g.$
\end{proof}
If $\mathcal{R}$ is a $C^{*}$-relation with universal representation
$\iota:\mathcal{X}\rightarrow U$ then we call $U$ the \emph{universal
$C^{*}$-algebra for $\mathcal{R}$} and use for notation 
$U=C^{*}\left\langle \mathcal{X}\left|\mathcal{R}\right.\right\rangle .$
Sometimes we will use $\iota_{\mathcal{R}}$ in place of the generic
$\iota.$

\begin{example}
There is one free $C^{*}$-algebra, namely 
$C^{*}\left\langle 
\emptyset\left|\mathcal{F}_{\emptyset}\right.
\right\rangle ,$
which is just $\{0\}.$
\end{example}

\begin{example}
For any $C^{*}$-algebra $A,$
\[
C^{*}\left\langle 
A\left|A\rightarrow B\mbox{ is a $*$-homomorphism}\right.
\right\rangle \cong A.
\]
That is, if we let $\mathcal{R}_{A}$ be the full 
subcategory of $\mathcal{F}_{A}$
with objects $f:A\rightarrow B$ that are $*$-homomorphisms, then
$A$ is isomorphic to 
$C^{*}\left\langle A\left|\mathcal{R}_{A}\right.\right\rangle .$
\end{example}

Neither the zero sets of noncommutative polynomials, not even $a^{2}=0,$
nor basic order relations like $a\leq b$ are compact. The story must
continue, and that means leaving our familiar category.

\section{\label{sec:pro-Cstar-Relations} Relations in Pro-$C^{*}$-algebras}

For any pro-$C^{*}$-algebra $A,$ let $S(A)$ denote the set of all
continuous $C^{*}$-seminorms on $A.$ For $p$ in $S(A)$ we have
the $C^{*}$-algebra $A_{p}=A/\ker(p)$ and the surjection 
$\pi_{p}:A\rightarrow A_{p}.$
For $q\geq p$ we have also surjections $\pi_{q,p}:A_{q}\rightarrow A_{p}.$
If $S\subseteq S(A)$ is cofinal then 
${\displaystyle A=\lim_{\longleftarrow}A_{p}}$
where $p$ ranges over $S.$ 

Starting from an inverse system of $C^{*}$-algebras, 
$\rho_{\lambda^{\prime},\lambda}:A_{\lambda^{\prime}}\rightarrow A_{\lambda}$
for $\lambda\preceq\lambda^{\prime}$ in $\Lambda,$ we can take the
inverse limit and get a pro-$C^{*}$-algebra 
${\displaystyle A=\lim_{\longleftarrow}A_{\lambda}}.$
However, the induced $*$-homomorphisms 
$\rho_{\lambda}:A\rightarrow A_{\lambda}$
may fail to be surjective. However, if $\Lambda=\mathbb{N}$ then
the $\rho_{\lambda}$ are always surjections. For proofs of these
facts, see \cite[\S 1]{PhillipsProCstarAlg} and
\cite[\S 5]{PhillipsInverseLimits}.

\begin{lem}
\label{lem:inductiveSystemsReallySetsOfSeminorm} 
Suppose 
${\displaystyle A=\lim_{\longleftarrow}A_{\lambda}}$
is a pro-$C^{*}$-algebra and $\rho_{\lambda}:A\rightarrow A_{\lambda}$
is a surjection for all $\lambda$ in $\Lambda.$ There is an order-preserving,
cofinal map $\gamma:\Lambda\rightarrow S(A)$ and there are isomorphisms
$\varphi_{\lambda}:A_{\gamma(\lambda)}\rightarrow A_{\lambda}$ so
that $\rho_{\lambda}=\varphi_{\lambda}\circ\pi_{\gamma(\lambda)}.$
\end{lem}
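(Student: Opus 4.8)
The plan is to produce the seminorms directly from the maps $\rho_\lambda$. For each $\lambda \in \Lambda$ set
\[
\gamma(\lambda)(a) = \bigl\| \rho_\lambda(a)\bigr\|_{A_\lambda}, \qquad a \in A .
\]
Since $\rho_\lambda$ is a continuous $*$-homomorphism into a $C^{*}$-algebra, $\gamma(\lambda)$ is a continuous $C^{*}$-seminorm, so $\gamma(\lambda) \in S(A)$, and $\ker \gamma(\lambda) = \ker \rho_\lambda$. The surjection $\pi_{\gamma(\lambda)} : A \to A_{\gamma(\lambda)}$ has exactly this kernel, so it induces a $*$-homomorphism $\varphi_\lambda : A_{\gamma(\lambda)} \to A_\lambda$ with $\rho_\lambda = \varphi_\lambda \circ \pi_{\gamma(\lambda)}$. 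From $\|\varphi_\lambda(\pi_{\gamma(\lambda)}(a))\| = \|\rho_\lambda(a)\| = \gamma(\lambda)(a) = \|\pi_{\gamma(\lambda)}(a)\|$ we see that $\varphi_\lambda$ is isometric, hence injective; it is surjective because $\rho_\lambda$ is. Thus $\varphi_\lambda$ is an isomorphism and the required factorization holds. Order-preservation of $\gamma$ is then immediate: for $\lambda \preceq \lambda'$ the inverse-system identity $\rho_\lambda = \rho_{\lambda',\lambda}\circ\rho_{\lambda'}$ together with the fact that the $*$-homomorphism $\rho_{\lambda',\lambda}$ does not increase norm gives $\gamma(\lambda)(a) \le \gamma(\lambda')(a)$ for all $a \in A$.

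The substantive point is that $\gamma$ is cofinal in $S(A)$. Here I would use that the topology of $A = \lim_{\longleftarrow} A_\lambda$ is the initial topology for the family $(\rho_\lambda)$, and so is defined by the seminorms $\{\gamma(\lambda) : \lambda \in \Lambda\}$. Let $q \in S(A)$. Continuity of $q$ provides $\lambda_1, \dots, \lambda_m \in \Lambda$ and $\varepsilon > 0$ with $q(a) < 1$ whenever $\gamma(\lambda_i)(a) < \varepsilon$ for all $i$. Choosing $\lambda' \succeq \lambda_i$ for every $i$ (possible since $\Lambda$ is directed) and invoking the monotonicity just established, we get $q(a) < 1$ whenever $\gamma(\lambda')(a) < \varepsilon$, and a routine homogeneity argument upgrades this to the seminorm inequality $q \le \varepsilon^{-1}\gamma(\lambda')$.

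The step I expect to be the only one that is not purely formal is the removal of the constant $\varepsilon^{-1}$, which is where the $C^{*}$-condition on the seminorms is used. Since $q$ and $\gamma(\lambda')$ are $C^{*}$-seminorms,
\[
q(a)^2 = q(a^*a) \le \varepsilon^{-1}\gamma(\lambda')(a^*a) = \varepsilon^{-1}\gamma(\lambda')(a)^2 ,
\]
so $q \le \varepsilon^{-1/2}\gamma(\lambda')$; iterating, $q \le \varepsilon^{-1/2^k}\gamma(\lambda')$ for every $k \ge 1$, and letting $k \to \infty$ yields $q \le \gamma(\lambda')$. Hence every continuous $C^{*}$-seminorm on $A$ is dominated by some $\gamma(\lambda)$, so $\gamma$ is cofinal, which completes the proof.
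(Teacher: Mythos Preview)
Your proof is correct and follows essentially the same approach as the paper: define $\gamma(\lambda)(a)=\|\rho_\lambda(a)\|$, observe $\varphi_\lambda$ is an isomorphism because $\rho_\lambda$ is onto, get order-preservation from the bonding maps being contractive, and deduce cofinality from the fact that the $\gamma(\lambda)$ determine the inverse-limit topology. The only difference is that the paper asserts cofinality in one sentence, whereas you spell out the standard $C^*$-seminorm argument (the square-root iteration to absorb the constant); that extra detail is fine and correct.
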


\begin{proof}
Simply define $\gamma(\lambda)(a)=\|\rho_{\lambda}(a)\|.$ There is
clearly an injective $*$-homomorphism $\varphi_{\lambda}$ defined
by
\[
\varphi_{\lambda}(a+\ker\gamma(\lambda))=\rho_{\lambda}(a)
\]
and it is onto because $\rho_{\lambda}$ is assumed to be onto. If
$\lambda\preceq\lambda^{\prime}$ then $\rho_{\lambda^{\prime},\lambda}$
is norm decreasing, which is easily seen to imply 
$\gamma(\lambda)\leq\gamma(\lambda^{\prime}).$
The inverse limit topology on $A$ is determined by the $\gamma(\lambda)$
and so $\gamma(\Lambda)$ must be cofinal. 
\end{proof}

\begin{lem}
\label{lem:proMorphismEmbeddings}
Suppose $A$ and $B$ 
are pro-$C^{*}$-algebras
and that $T\subseteq S(B)$ is cofinal. If $\varphi:A\rightarrow B$
is a $*$-homomorphism that is a homeomorphism onto its image then
there is a cofinal function $\theta:T\rightarrow S(A)$ and injective
$*$-homomorphisms $\varphi_{p}:A_{\theta(p)}\hookrightarrow B_{p}$
so that, for all $p$ in $T,$ we have 
$\pi_{p}\circ\varphi=\varphi_{p}\circ\pi_{\theta(p)}.$ 
\end{lem}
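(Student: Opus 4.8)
The plan is to build $\theta$ by pulling seminorms back along $\varphi$, and to read off the maps $\varphi_p$ from the canonical factorizations this produces.

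First I would set $\theta(p) = p \circ \varphi$ for each $p \in T$. Since $\varphi$ is a homeomorphism onto its image it is in particular a continuous $*$-homomorphism $A \to B$, so $\theta(p)$ is a continuous seminorm on $A$, and the computation $\theta(p)(a^{*}a) = p(\varphi(a)^{*}\varphi(a)) = p(\varphi(a))^{2} = \theta(p)(a)^{2}$ shows $\theta(p) \in S(A)$. Because $\ker\theta(p) = \varphi^{-1}(\ker p)$, the rule $a + \ker\theta(p) \mapsto \varphi(a) + \ker p$ is a well-defined $*$-homomorphism $\varphi_{p}\colon A_{\theta(p)} \to B_{p}$, and it is injective (indeed isometric, since $\|\varphi_{p}(\pi_{\theta(p)}(a))\| = p(\varphi(a)) = \theta(p)(a) = \|\pi_{\theta(p)}(a)\|$). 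The intertwining identity $\pi_{p}\circ\varphi = \varphi_{p}\circ\pi_{\theta(p)}$ then holds by construction, as both sides carry $a$ to $\varphi(a) + \ker p$. I would also note in passing that $\theta$ is automatically order-preserving, since $p \le p'$ gives $p\circ\varphi \le p'\circ\varphi$, although only cofinality is claimed.

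The substantive step is to show $\theta(T)$ is cofinal in $S(A)$. Given $r \in S(A)$, injectivity of $\varphi$ lets $r$ descend to a function $\tilde r$ on $\varphi(A)$ with $r = \tilde r \circ \varphi$; continuity of $\varphi^{-1}\colon \varphi(A)\to A$ — this is precisely where "homeomorphism onto its image", rather than mere continuity, is used — makes $\tilde r = r\circ\varphi^{-1}$ continuous for the subspace topology on $\varphi(A)\subseteq B$. Since $T$ is cofinal in the directed set $S(B)$, that subspace topology is generated by the restrictions $p|_{\varphi(A)}$, $p \in T$, so continuity of $\tilde r$ at $0$ produces $p \in T$ and $\epsilon > 0$ with $p(\varphi(a)) < \epsilon \Rightarrow r(a) < 1$; a standard homogeneity argument upgrades this to $r(a) \le \epsilon^{-1}\theta(p)(a)$ for all $a$, and the $C^{*}$-identity upgrades domination up to a constant to honest domination $r \le \theta(p)$. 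Hence $\theta(T)$ is cofinal.

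The only genuine obstacle is this cofinality argument, and the point to get right is the use of the homeomorphism hypothesis: without it $\tilde r$ need not be continuous and $\theta(T)$ can fail to be cofinal (think of a dense, non-closed embedding). Two minor bookkeeping points I would check carefully are that a basic subspace neighborhood of $0$ in $\varphi(A)$ can be taken to come from a single seminorm in $T$ — valid because a cofinal subset of a directed set is directed, and $S(B)$ is directed since the pointwise maximum of two $C^{*}$-seminorms is again a $C^{*}$-seminorm — and that the scaling argument treats elements with $\theta(p)(a) = 0$ separately, such $a$ forcing $r(a) = 0$.
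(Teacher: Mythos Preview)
Your proof is correct and follows essentially the same route as the paper: define $\theta(p)=p\circ\varphi$, get the injective $\varphi_p$ from the isometric factorization, and for cofinality use the homeomorphism hypothesis to produce, for a given seminorm on $A$, some $p\in T$ and $\epsilon>0$ with $\theta(p)(a)<\epsilon\Rightarrow q(a)<1$, then upgrade via the $C^{*}$-identity. The only cosmetic difference is that the paper phrases the key step in terms of $\varphi$ being open (so $\varphi(B(q,1))$ contains a basic subspace neighborhood), whereas you phrase it in terms of $\varphi^{-1}$ being continuous; these are two sides of the same coin.
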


\begin{proof}
For any $p$ in $T$ we know that $p\circ\varphi$ is in $S(A),$
so we define $\theta(p)=p\circ\varphi.$ Since $a\in\ker(\theta(p))$
implies 
\[
\|\pi_{p}\circ\varphi(a)\|=p(\varphi(a))=0
\]
we find that $\varphi$ induces a $*$-homomorphism $\varphi_{p}$
from $A_{\theta(p)}$ to $B_{p}$ with 
$\pi_{p}\circ\varphi=\varphi_{p}\circ\pi_{\theta(p)}.$
It is injective since
\[
\|\varphi_{p}(\pi_{\theta(p)}(a))\|
  =  \|\pi_{p}(\varphi(a))\|
  =  p(\varphi(a)) 
  =   \theta(p)(a)
  =  \|\pi_{\theta(p)}(a)\|.
\]

We wish to show that $\theta(T)$ is cofinal. For $p$ in $T$ let
\[
B(p,\epsilon)=\left\{ b\in B\left|\, \strut p(b)<\epsilon\right.\right\} 
\]
and define $B(q,\epsilon)$ similarly for $q$ in $S(A).$ These sets
form neighborhood bases at the respective origins.

Suppose $q$ is in $S(A).$ Since $\varphi$ is open, there is an
$\epsilon>0$ and a $p$ in $T$ so that
\[
B(p,\epsilon)\subseteq\varphi(B(q,1)).
\]
For $a$ in $A,$
\[
\theta(p)(a)<\epsilon
\implies
\exists a_{1}\in A \mbox{ s.t.\,}
q(a_1)<1\mbox{ and }\varphi(a_{1})=\varphi(a)
\]
and, since $\varphi$ is one-to-one, 
\[
\theta(p)(a)<\epsilon\implies q(a)<1.
\]
Standard facts about $C^{*}$-algebras show that this implies 
$q\leq\theta(p).$
\end{proof}

\begin{lem}
\label{lem:proMorphismPicture}
Suppose $A$ and $B$ are pro-$C^{*}$-algebras
and that $S\subseteq S(A)$ and $T\subseteq S(B)$ are cofinal. If
$\varphi:A\rightarrow B$ is a continuous $*$-homomorphism then there
is a function $\theta:T\rightarrow S$ and there are $*$-homomorphisms
$\varphi_{p}:A_{\theta(p)}\rightarrow B_{p}$ so that 
$\pi_{p}\circ\varphi=\varphi_{p}\circ\pi_{\theta(p)}$
for all $p$ in $T.$
\end{lem}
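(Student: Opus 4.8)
The plan is to imitate the proof of Lemma~\ref{lem:proMorphismEmbeddings}, but without the openness hypothesis --- so the only thing we lose is the injectivity of the induced maps $\varphi_p$ and the cofinality of $\theta$, neither of which is asserted in the statement. First I would fix $p$ in $T$ and observe that since $\varphi$ is continuous and $S$ is cofinal in $S(A)$, the $C^*$-seminorm $p\circ\varphi$ on $A$ is dominated by some $q$ in $S$; that is, there is $q \in S$ with $p(\varphi(a)) \le q(a)$ for all $a$. (This is exactly where cofinality of $S$ is used: the seminorm $p\circ\varphi$ is continuous on $A$, hence bounded by a scalar multiple of some $q_0 \in S$, and by rescaling, or by absorbing into the directed set, one gets $p\circ\varphi \le q$ for some $q \in S$.) Set $\theta(p) = q$ for one such choice of $q$, so that $\theta: T \to S$ is defined (using the axiom of choice to pick $q$ for each $p$).

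Next I would check that $\varphi$ descends to a $*$-homomorphism $\varphi_p: A_{\theta(p)} \to B_p$ with $\pi_p \circ \varphi = \varphi_p \circ \pi_{\theta(p)}$. The point is that $a \in \ker(\theta(p)) = \ker(q)$ forces $\theta(p)(a) = 0$, hence $p(\varphi(a)) \le \theta(p)(a) = 0$, so $\varphi(a) \in \ker(p)$; therefore the assignment $\pi_{\theta(p)}(a) \mapsto \pi_p(\varphi(a))$ is well defined. It is clearly a $*$-homomorphism, and it is automatically norm-decreasing (so bounded, hence continuous) because $\|\varphi_p(\pi_{\theta(p)}(a))\| = p(\varphi(a)) \le \theta(p)(a) = \|\pi_{\theta(p)}(a)\|$; in fact one should extend $\varphi_p$ by continuity from the dense image of $\pi_{\theta(p)}$ to all of $A_{\theta(p)}$, but since $\pi_{\theta(p)}$ is surjective onto $A_{\theta(p)}$ this is immediate. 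The required identity $\pi_p \circ \varphi = \varphi_p \circ \pi_{\theta(p)}$ then holds on all of $A$ by construction.

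I do not expect any serious obstacle here: this is a genuinely easier statement than Lemma~\ref{lem:proMorphismEmbeddings}, and the only subtlety worth flagging is the first step, namely that continuity of $\varphi$ together with cofinality of $S$ really does give $p \circ \varphi \le q$ for some single $q \in S$ (as opposed to merely $p\circ\varphi \le C q$ for a constant, or $p\circ\varphi \le \max(q_1,\dots,q_k)$ for finitely many $q_i \in S$ --- both of which reduce to the clean statement because $S$ is a directed cofinal subset of the lattice $S(A)$ and $C^*$-seminorms absorb positive scalar multiples in the relevant sense, e.g.\ $Cq \le q'$ for suitable $q' \in S(A)$, and one then replaces $q'$ by a dominating member of $S$). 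Once that is in hand the rest is the routine induced-map construction already carried out twice above, so I would write it compactly, citing the computation in Lemma~\ref{lem:proMorphismEmbeddings} for the norm estimates rather than repeating it.
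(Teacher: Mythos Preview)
Your proposal is correct and follows exactly the paper's approach: pick $\theta(p)\in S$ dominating $p\circ\varphi$, then observe that $\ker\theta(p)\subseteq\ker(p\circ\varphi)$ so the map descends. The one place you overcomplicate matters is the ``subtlety'' you flag at the end: since $\varphi$ is a $*$-homomorphism and $p$ is a $C^{*}$-seminorm, the composition $p\circ\varphi$ is itself a continuous $C^{*}$-seminorm on $A$, i.e.\ an element of $S(A)$ outright, so cofinality of $S$ gives $\theta(p)\geq p\circ\varphi$ directly with no constants or finite maxima to absorb (and indeed $Cq$ for $C\neq 1$ is not a $C^{*}$-seminorm, so that detour would not work as stated).
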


\begin{proof}
For any $p$ in $T,$ we know that $p\circ\varphi$ is in $S(A),$
so choose $\theta(p)\in S$ with $\theta(p)\geq p\circ\varphi.$
Since $a\in\ker(\theta(p))$ implies 
\[
\|\pi_{p}\circ\varphi(a)\|=p(\varphi(a))=0
\]
we find that $\varphi$ induces a $*$-homomorphism $\varphi_{p}$
from $A_{\theta(p)}$ to $B_{p}$ with 
$\pi_{p}\circ\varphi=\varphi_{p}\circ\pi_{\theta(p)}.$ 
\end{proof}

In the last two lemmas, the function $\theta$ and the maps 
$\varphi_{p}:A_{\theta(p)}\rightarrow B_{p}$
are a morphism in the sense of Grothendieck 
(\cite{AtiyahSegalCompletion})
between the pro-objects $(A,\{ A_{p}\},\{\pi_{p}\})$ 
and $(B,\{ B_{p}\},\{\pi_{p}\}).$
That is, one can show that if $p\leq p^{\prime}$ and $q\geq\theta(p)$
and $q\geq\theta(p')$ then 
\[
\varphi_{p}\circ\pi_{q,\theta(p)}
=
\pi_{p^{\prime},p}\circ\varphi_{p^{\prime}}\circ\pi_{q,\theta(p^{\prime})}.
\]

\begin{lem}
\label{lem:mapsProCstarToCstarFactor} 
Suppose ${\displaystyle A=\lim_{\longleftarrow}A_{\lambda}}$
is a pro-$C^{*}$-algebra and $\rho_{\lambda}:A\rightarrow A_{\lambda}$
is a surjection for all $\lambda$ in $\Lambda.$ Suppose $B$ is
a $C^{*}$-algebra. If $\varphi:A\rightarrow B$ is a continuous $*$-homomorphism
then there exists $\lambda$ in $\Lambda$ and 
$\varphi^{\prime}:A_{\lambda}\rightarrow B$
so that $\varphi=\varphi^{\prime}\circ\rho_{\lambda}.$
\end{lem}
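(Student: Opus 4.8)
The plan is to reduce the statement, via Lemma~\ref{lem:inductiveSystemsReallySetsOfSeminorm}, to the case where the index set is a cofinal set $S$ of continuous $C^{*}$-seminorms on $A$ and each $\rho_{\lambda}$ is the canonical quotient $\pi_{p}:A\to A_{p}$. So it suffices to find $p\in S$ and a $*$-homomorphism $\varphi':A_{p}\to B$ with $\varphi=\varphi'\circ\pi_{p}$; since the $\pi_{q,p}$ relate the various $A_{p}$, the conclusion for a general inverse system follows by transport along the isomorphisms $\varphi_{\lambda}$ of that lemma.

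Next I would produce the seminorm $p$ from the continuity of $\varphi$. The target $B$ is a $C^{*}$-algebra, so $b\mapsto\|b\|$ is a continuous $C^{*}$-seminorm on $B$, and $a\mapsto\|\varphi(a)\|$ is therefore a continuous $C^{*}$-seminorm on $A$; by cofinality of $S$ there is $p\in S$ with $p\geq\|\varphi(\cdot)\|$. Then $\ker(\pi_{p})=\ker(p)\subseteq\ker(\varphi)$, so $\varphi$ descends to a well-defined $*$-homomorphism $\varphi':A_{p}\to B$ with $\varphi'\circ\pi_{p}=\varphi$. Concretely one sets $\varphi'(a+\ker p)=\varphi(a)$ and checks well-definedness from the kernel inclusion; automatic continuity of $*$-homomorphisms between $C^{*}$-algebras handles boundedness. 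This is essentially the same mechanism already used in Lemma~\ref{lem:proMorphismPicture}, now with the single target seminorm being the norm on $B$.

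The one point requiring a little care — and the step I expect to be the main obstacle — is that $p$ lies in the \emph{chosen} cofinal set $S$, equivalently that the index appearing in the final factorization is one of the original $\lambda\in\Lambda$ rather than some auxiliary seminorm. This is exactly why the reduction via Lemma~\ref{lem:inductiveSystemsReallySetsOfSeminorm} is needed: after that reduction $S=\gamma(\Lambda)$ is cofinal in $S(A)$, so $\|\varphi(\cdot)\|$ is dominated by some $\gamma(\lambda)$, and pulling back along the isomorphism $\varphi_{\lambda}:A_{\gamma(\lambda)}\to A_{\lambda}$ gives the desired $\varphi'=\varphi''\circ\varphi_{\lambda}^{-1}$ with $\varphi=\varphi'\circ\rho_{\lambda}$. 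I would also remark that no metrizability or $\sigma$-hypothesis is needed here; the argument works for an arbitrary inverse system provided the structure maps $\rho_{\lambda}$ to the $C^{*}$-quotients are surjective, which is the standing hypothesis.
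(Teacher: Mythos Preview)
Your proposal is correct and follows essentially the same approach as the paper: reduce via Lemma~\ref{lem:inductiveSystemsReallySetsOfSeminorm} to the seminorm picture, then factor through $A_{p}$ using the continuity of $\varphi$, which is precisely the content of Lemma~\ref{lem:proMorphismPicture} specialized to a $C^{*}$-algebra target. The paper simply cites those two lemmas, whereas you have unpacked the mechanism of the second one explicitly.
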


\begin{proof}
Lemma~\ref{lem:inductiveSystemsReallySetsOfSeminorm} reduces this
to a special case of Lemma~\ref{lem:proMorphismPicture}.
\end{proof}

\begin{lem}
Suppose $\mathcal{R}$ is a $C^{*}$-relation on $\mathcal{X}.$ Suppose
$f:\mathcal{X}\rightarrow A$ is a function and $A$ is a pro-$C^{*}$-algebra.
If $\pi_{p}\circ f$ is a representation of $\mathcal{R}$ in $A_{p}$
for all $p$ in a cofinal set $S$ in $S(A)$ then $\varphi\circ f$
is a representation of $\mathcal{R}$ for every continuous $*$-homomorphisms
$\varphi$ from $A$ to a $C^{*}$-algebra.
\end{lem}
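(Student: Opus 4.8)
The plan is to reduce the claim to Lemma~\ref{lem:mapsProCstarToCstarFactor} together with axiom \textbf{C3}. Fix a $C^{*}$-algebra $B$ and a continuous $*$-homomorphism $\varphi:A\to B$; the goal is to show $\varphi\circ f$ is a representation of $\mathcal{R}$. Since $S$ is cofinal in $S(A)$, the facts recalled at the start of this section give ${\displaystyle A=\lim_{\longleftarrow}A_{p}}$ with $p$ ranging over $S$, and each quotient map $\pi_{p}:A\to A_{p}$ is surjective (here one uses that $A$ is complete, so each $A_{p}=A/\ker p$ is already a $C^{*}$-algebra). Thus the hypotheses of Lemma~\ref{lem:mapsProCstarToCstarFactor} are met with the index set taken to be $S$.

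First I would apply that lemma to obtain some $p\in S$ and a $*$-homomorphism $\varphi^{\prime}:A_{p}\to B$ with $\varphi=\varphi^{\prime}\circ\pi_{p}$. By hypothesis $\pi_{p}\circ f$ is a representation of $\mathcal{R}$, since $p\in S$. As $\varphi^{\prime}$ is a $*$-homomorphism between $C^{*}$-algebras, axiom \textbf{C3} shows that $\varphi^{\prime}\circ(\pi_{p}\circ f)$ is again a representation of $\mathcal{R}$. But
\[
\varphi\circ f=\varphi^{\prime}\circ\pi_{p}\circ f,
\]
so this is precisely the assertion that $\varphi\circ f$ is a representation of $\mathcal{R}$.

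There is no genuinely hard step here: the content is carried entirely by Lemma~\ref{lem:mapsProCstarToCstarFactor}, and the only thing needing care is the bookkeeping that licenses its use, namely that the canonical presentation of $A$ by the quotients $A_{p}$, $p\in S$, is an inverse system of $C^{*}$-algebras in which every map $A\to A_{p}$ is surjective. I would also record the observation that, taking $B=A_{q}$ and $\varphi=\pi_{q}$ for an arbitrary $q\in S(A)$, the conclusion upgrades the hypothesis from ``$\pi_{p}\circ f$ is a representation for all $p$ in a cofinal set $S$'' to ``$\pi_{p}\circ f$ is a representation for all $p\in S(A)$''; this stronger form is the one convenient in the applications to closed $C^{*}$-relations.
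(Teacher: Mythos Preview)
Your proof is correct and follows essentially the same strategy as the paper: factor $\varphi$ through some quotient $A_{p}$ and then apply axiom \textbf{C3}. The only packaging difference is that the paper first upgrades the hypothesis to all $p\in S(A)$ via the elementary identity $\pi_{p}=\pi_{q,p}\circ\pi_{q}$ for $q\geq p$ in $S$, and then observes that an arbitrary $\varphi$ is (up to an injective $*$-homomorphism) one of the $\pi_{p}$; you instead invoke Lemma~\ref{lem:mapsProCstarToCstarFactor} to get the factorisation through $\pi_{p}$ with $p\in S$ directly, and record the upgrade to all of $S(A)$ as a corollary at the end.
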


\begin{proof}
Composition with a $*$-isomorphism preserves representations of $\mathcal{R},$
so it suffices to show $\pi_{p}\circ f$ is a representation for any
$p$ in $S(A).$ Since $S$ is cofinal, we know $\pi_{q}\circ f$
is a representation for some $q\geq p.$ Therefore
\[
\pi_{p}\circ f  =  \pi_{q,p}\circ\pi_{q}\circ f
\]
is a representation.
\end{proof}

Given $A_\lambda$ a pro-$C^*$-algebra for each $\lambda$ in
a set $\Lambda,$ we denote the $*$-algebra of all families
$\langle a_\lambda \rangle$ with $a_\lambda \in A_\lambda$ by
\[
A=\prod_{\lambda\in\Lambda}^{\textnormal{pro}C^{*}}A_{\lambda},
\]
with projection maps $\rho_{\lambda}:A\rightarrow A_{\lambda}.$
This becomes a pro-$C^*$-algebra if we endow it with the
product topology.

\begin{lem}
If $A_\lambda$ is a family of pro-$C^*$-algebras and
\[
A=\prod_{\lambda\in\Lambda}^{\textnormal{pro}C^{*}}A_{\lambda},
\]
then the seminorms of the form
\[
\max\left(
p_{1}\circ\rho_{\lambda_{1}},\ldots,p_{n}\circ\rho_{\lambda_{n}}
\right)
\]
 for $p_{j}$ in $S\left(A_{\lambda_{j}}\right)$ are cofinal in $S(A).$
\end{lem}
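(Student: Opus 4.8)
The plan is to separate the statement into three routine pieces: that every proposed seminorm actually lies in $S(A)$, that these seminorms generate the product topology, and that they are cofinal in $S(A)$ by the standard domination fact for $C^{*}$-seminorms.

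First I would check membership in $S(A)$. Each $p_{j}\circ\rho_{\lambda_{j}}$ is a continuous $C^{*}$-seminorm on $A$ because $\rho_{\lambda_{j}}$ is a continuous $*$-homomorphism and $p_{j}\in S(A_{\lambda_{j}})$. The pointwise maximum of finitely many $C^{*}$-seminorms is again a $C^{*}$-seminorm: the $C^{*}$-identity passes to the maximum since $\max_{j}r_{j}(a^{*}a)=\max_{j}r_{j}(a)^{2}=(\max_{j}r_{j}(a))^{2}$, submultiplicativity passes since $\max_{j}r_{j}(ab)\leq\max_{j}(r_{j}(a)r_{j}(b))\leq(\max_{j}r_{j}(a))(\max_{j}r_{j}(b))$, and the triangle inequality and homogeneity are immediate. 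Hence every $q=\max(p_{1}\circ\rho_{\lambda_{1}},\ldots,p_{n}\circ\rho_{\lambda_{n}})$ is a continuous $C^{*}$-seminorm, so lies in $S(A)$.

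Next I would record a neighborhood base at $0$ for the product topology: it consists of the finite intersections $\bigcap_{j=1}^{n}\rho_{\lambda_{j}}^{-1}(V_{j})$ with $V_{j}$ a neighborhood of $0$ in $A_{\lambda_{j}}$. Since each $A_{\lambda_{j}}$ is a pro-$C^{*}$-algebra, $V_{j}$ contains a set $\{b:p_{j}(b)<\delta_{j}\}$ for some $p_{j}\in S(A_{\lambda_{j}})$, and with $\delta=\min_{j}\delta_{j}$ the set $\{a:q(a)<\delta\}$, for $q=\max_{j}p_{j}\circ\rho_{\lambda_{j}}$, is contained in that intersection; conversely each $\{a:q(a)<\delta\}$ is open in the product topology. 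So the sets cut out by our family of seminorms form a neighborhood base at $0$, i.e.\ this family defines the product topology. In particular every $q$ in the family is continuous on $A$, confirming the previous paragraph once more.

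Finally, cofinality. Given an arbitrary $p\in S(A)$, continuity of $p$ together with the base just described yields some $q$ in the family and an $\epsilon>0$ with $q(a)<\epsilon\Rightarrow p(a)<1$. Rescaling $a$ by positive scalars gives $p\leq\epsilon^{-1}q$, and then applying this to $(a^{*}a)^{k}$ and using the relation $r((a^{*}a)^{k})=r(a^{*}a)^{k}$, valid for any $C^{*}$-seminorm $r$, and letting $k\to\infty$ upgrades the estimate to $p(a^{*}a)\leq q(a^{*}a)$, hence $p\leq q$; this is precisely the ``standard facts about $C^{*}$-algebras'' step used at the end of the proof of Lemma~\ref{lem:proMorphismEmbeddings}. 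Thus every element of $S(A)$ is dominated by a member of the stated family, which is the asserted cofinality. The only nonroutine ingredient is this last domination fact, and since it is already invoked earlier in the paper I do not anticipate a genuine obstacle; the rest is bookkeeping with the product topology.
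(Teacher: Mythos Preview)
Your proposal is correct and follows essentially the same route as the paper. The paper's proof is just a terse version of yours: it invokes the general principle that a family of $C^{*}$-seminorms closed under pairwise $\max$ is cofinal if and only if it determines the topology, and then observes that the product topology is generated by the $p\circ\rho_{\lambda}$; your steps (2) and (3) simply unpack both halves of that principle, with the domination argument in (3) being exactly the ``standard facts about $C^{*}$-algebras'' trick already used in Lemma~\ref{lem:proMorphismEmbeddings}.
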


\begin{proof}
A collection of $C^{*}$-seminorms on $A$ that is closed under the
pairwise max operation is cofinal if and only if it determines the
topology on $A.$ In this case, the topology is component-wise convergence,
and the seminorms $p\circ\rho_{\lambda},$ for $p\in S\left(A_{\lambda}\right),$
determine the topology.
\end{proof}

Suppose $\mathcal{X}$ is any set. For each 
$l:\mathcal{X}\rightarrow[0,\infty)$
define
\[
F_{l}\langle\mathcal{X}\rangle
=
C^{*}\left\langle
\mathcal{X}\left|\forall x\in\mathcal{X},\,\| x\|\leq l(x)\right.
\right\rangle 
\]
with $\iota_{l}$ the universal representation. Consider also the
$*$-algebra of $*$-polynomials in noncommuting variables 
\[
\left\{ x,x^{*}\left|x\in\mathcal{X}\right.\right\} 
\]
 (the $x^{*}$ being some symbols not in $\mathcal{X}$) hereby denoted
$\mathbb{C}\left[\mathcal{X}\cup\mathcal{X}^{*}\right].$

Lemma~\ref{lem:StarPolysEmbed} is by Goodearl and Menal
\cite[Proposition 2.2]{GoodearlMenalRFDandFree}.
The proof is only a little modified from theirs.

\begin{lem}
\label{lem:StarPolysEmbed}For any $l>0$ the canonical $*$-homomorphism
\[
\mathbb{C}\left[\mathcal{X}\cup\mathcal{X}^{*}\right]\rightarrow
C^{*}
\left\langle
\mathcal{X}\left|\|\,\strut x\|\leq l(x)\right.
\right\rangle 
\]
is one-to-one.
\end{lem}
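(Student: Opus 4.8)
The plan is to deduce the lemma from the universal property of $F_{l}\langle\mathcal{X}\rangle=C^{*}\langle\mathcal{X}\,|\,\|x\|\le l(x)\rangle$. Write $\kappa:\mathbb{C}[\mathcal{X}\cup\mathcal{X}^{*}]\to F_{l}\langle\mathcal{X}\rangle$ for the canonical $*$-homomorphism (it exists because $\|\iota_{l}(x)\|\le l(x)<\infty$, so $*$-polynomials in the $\iota_{l}(x)$ make sense). If $f:\mathcal{X}\to A$ is a representation of the relation and $\varphi:F_{l}\langle\mathcal{X}\rangle\to A$ is the induced $*$-homomorphism with $\varphi\circ\iota_{l}=f$, then $\varphi\circ\kappa$ is evaluation of $*$-polynomials at $f$. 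Hence $\kappa$ is one-to-one as soon as we exhibit a \emph{single} representation $f:\mathcal{X}\to B(H)$ for which evaluation at $f$ is a one-to-one map out of $\mathbb{C}[\mathcal{X}\cup\mathcal{X}^{*}]$; there is then no need to treat finitely and infinitely many generators separately. This single representation will be a rescaling of a free Fock-type model, following Goodearl and Menal.

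First I would build a faithful copy of $\mathbb{C}[\mathcal{X}\cup\mathcal{X}^{*}]$ in bounded operators, ignoring the norm bound. Passing to the self-adjoint generators $h_{x}=\tfrac12(x+x^{*})$ and $k_{x}=\tfrac1{2i}(x-x^{*})$ identifies $\mathbb{C}[\mathcal{X}\cup\mathcal{X}^{*}]$ with the free associative $\mathbb{C}$-algebra on $\mathcal{Y}=\{h_{x},k_{x}:x\in\mathcal{X}\}$, the involution being ``conjugate the coefficients and reverse each word.'' Let $\mathcal{M}$ be the free monoid on $\mathcal{Y}$, let $H=\ell^{2}(\mathcal{M})$ with orthonormal basis $\{\delta_{w}:w\in\mathcal{M}\}$, let $\Omega=\delta_{e}$ be the vector at the empty word, and for $y\in\mathcal{Y}$ let $L_{y}$ be the creation isometry $L_{y}\delta_{w}=\delta_{yw}$ and $T_{y}=L_{y}+L_{y}^{*}$, a self-adjoint operator of norm at most $2$. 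Since $\mathbb{C}[\mathcal{X}\cup\mathcal{X}^{*}]$ is free on $\mathcal{Y}$ and each $T_{y}$ is self-adjoint, $y\mapsto T_{y}$ extends to a $*$-homomorphism $\Phi:\mathbb{C}[\mathcal{X}\cup\mathcal{X}^{*}]\to B(H)$.

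The crux is that $\Phi$ is injective, and this is the step that needs care. Given $0\ne p=\sum_{w}c_{w}w$, let $m$ be the greatest length of a word with $c_{w}\ne 0$ and apply $\Phi(p)$ to the vacuum $\Omega$. Expanding each $T_{y}=L_{y}+L_{y}^{*}$ and reading the product right to left, one uses that $L_{y}^{*}\Omega=0$ and that applying any $L_{y}^{*}$ strictly shortens the current basis vector, so that the full length $|w|$ can only be attained by using creation operators throughout; hence $\Phi(w)\Omega=\delta_{w}+\zeta_{w}$, where $\zeta_{w}$ is a combination of $\delta_{u}$ with $|u|<|w|$. Summing, the component of $\Phi(p)\Omega$ living on words of length $m$ is exactly $\sum_{|w|=m}c_{w}\delta_{w}$, which is nonzero because the $\delta_{w}$ are orthonormal and some such $c_{w}\ne 0$. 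Thus $\Phi(p)\Omega\ne 0$, so $\Phi(p)\ne 0$. This ``top length term survives on the vacuum'' bookkeeping is the only genuinely computational point.

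Finally I would insert the norm bound by rescaling. Set $Z_{x}=\Phi(x)=T_{h_{x}}+iT_{k_{x}}$; then $0<\|Z_{x}\|\le 4$ (it is nonzero since $Z_{x}\Omega=\delta_{h_{x}}+i\delta_{k_{x}}\ne 0$), so because $l(x)>0$ we may put $s_{x}=l(x)/\|Z_{x}\|\in(0,\infty)$ and define $f:\mathcal{X}\to B(H)$ by $f(x)=s_{x}Z_{x}$, which has $\|f(x)\|=l(x)$. Thus $f$ is a representation of the relation $\|x\|\le l(x)$, and the induced $*$-homomorphism $\varphi:F_{l}\langle\mathcal{X}\rangle\to B(H)$ satisfies $\varphi\circ\iota_{l}=f$. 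Now $\varphi\circ\kappa:\mathbb{C}[\mathcal{X}\cup\mathcal{X}^{*}]\to B(H)$ sends $x\mapsto s_{x}Z_{x}$, so $\varphi\circ\kappa=\Phi\circ\tau$, where $\tau$ is the $*$-automorphism of $\mathbb{C}[\mathcal{X}\cup\mathcal{X}^{*}]$ with $\tau(x)=s_{x}x$. Since $\Phi$ is injective and $\tau$ is bijective, $\varphi\circ\kappa$, and therefore $\kappa$, is injective. The hypothesis $l>0$ enters exactly here: it makes every $s_{x}$ finite and strictly positive, so that $\tau$ is invertible and no generator is forced to $0$.
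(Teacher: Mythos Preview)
Your proof is correct and takes a genuinely different route from the paper. The paper (following Goodearl--Menal) embeds $\mathbb{C}[\mathcal{X}\cup\mathcal{X}^{*}]$ into the full group $C^{*}$-algebra of the free group on two disjoint copies $\dot{\mathcal{X}}\cup\bar{\mathcal{X}}$ via $x\mapsto\dot{x}+\bar{x}$; injectivity is seen by tracking the monomials in $\varphi(p)$ whose letters alternate in the pattern $\dot{\ }\ \bar{\ }\ \dot{\ }\ \bar{\ }\ldots$, which undergo no cancellation in the free group. You instead pass to self-adjoint generators and use the full Fock space over $\ell^{2}(\mathcal{Y})$, sending $y\mapsto L_{y}+L_{y}^{*}$ and reading off injectivity from the top-length component of $\Phi(p)\Omega$. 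Your argument is slightly more self-contained (it does not invoke the embedding of a group algebra into its full $C^{*}$-algebra) and handles an arbitrary $l>0$ directly by the rescaling automorphism $\tau$, whereas the paper first normalizes to $l\equiv 2$; on the other hand, the paper avoids the change of generators and the rescaling bookkeeping, and its alternating-pattern argument is arguably more transparent for readers comfortable with free groups. Both are standard faithful models and either suffices.
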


\begin{proof}
For two nonzero choices for $l$ we get isomorphic $C^{*}$-algebras.
It is then easy to reduce to the case $l(x)=2$ for all $x.$

Let $U$ denote the full group $C^{*}$-algebra of the free group
generated by two copies of $\mathcal{X}.$ Let the two disjoint copies
of $x\in\mathcal{X}$ be $\dot{x}$ and $\bar{x}.$ In terms of generators
and relations in the category of unital $C^{*}$-algebras, 
\[
U=
C_{1}^{*}\left\langle
\dot{\mathcal{X}}\cup\bar{\mathcal{X}}
\left|\,\strut
\mbox{each }\dot{x}\mbox{ and }\bar{x}\mbox{ is unitary}
\right.
\right\rangle .
\]
We know that the group algebra embeds in $U,$ and so it is safe to
drop the inclusion map from our notation. Define
\[
\varphi:
\mathbb{C}\left[\mathcal{X}\cup\mathcal{X}^{*}\right]\rightarrow U
\]
by $\varphi(x)=\dot{x}+\bar{x}.$ Notice
$\varphi(x^{*})=\dot{x}^{-1}+\bar{x}^{-1}.$
Given a $*$-polynomial $p$ of degree $n,$ consider the terms in
$\varphi(p)$ that are in the alternating pattern 
``$\dot{\;}\;\bar{\;}\;\dot{\;}\;\bar{\;}\;\dot{\;}\;\bar{\;}.$''
Any simplifying that happens in $\varphi(p)$ will not involve these
terms, so $\varphi(p)=0$ implies that all monomials have coefficient
zero. This means $\varphi$ is injective, and the result follows.

To illustrate the argument based on the pattern of decorations, suppose
$\mathcal{X}=\{ x\}$ and
\[
p=x^{*}x+2xx^{*}+3x.
\]
Then
\[
\varphi(p)
=
\left(\dot{x}^{-1}\bar{x}+2\dot{x}\bar{x}^{-1}\right) +
\left(\bar{x}^{-1}\dot{x}+2\bar{x}\dot{x}^{-1}\right)
+3\dot{x}+3\bar{x}+6
\]
and so the dot-dash terms of length two reflect the coefficients of
the terms of length two in $p.$
\end{proof}

There are surjections between these ``free'' $C^{*}$-algebras. If
$l\geq l^{\prime}$ then sending $x$ to $x$ determines
\[
\gamma_{l,l^{\prime}} :
F_{l}\langle\mathcal{X}\rangle \rightarrow
F_{l^{\prime}}\langle\mathcal{X}\rangle.
\]
Finally let 
${\displaystyle \mathbb{F}\langle\mathcal{X}\rangle
=\lim_{\longleftarrow}F_{l}\langle\mathcal{X}\rangle}$
and $\iota:\mathcal{X}\rightarrow\mathbb{F}\langle\mathcal{X}\rangle$
be defined so that $\iota(x)$ corresponds to the coherent family
$\left\langle \iota_{l}(x)\right\rangle _{l}.$ There are the obvious
$*$-homomorphisms
$\gamma_{l} :
\mathbb{F}\langle\mathcal{X}\rangle \rightarrow
F_{l}\langle\mathcal{X}\rangle.$  These are in fact surjections, as
each generator determines a coherent family that is then sent to
the copy of that generator in $F_{l}\langle\mathcal{X}\rangle.$
Notice $\iota(\mathcal{X})$ algebraically generates a dense copy
of $\mathbb{C}\left[\mathcal{X}\cup\mathcal{X}^{*}\right].$ 

\begin{thm}
In the category of pro-$C^{*}$-algebras and continuous $*$-homomorphisms,
$\iota:\mathcal{X}\rightarrow\mathbb{F}\langle\mathcal{X}\rangle$
is free. 
\end{thm}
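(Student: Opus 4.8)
The plan is to verify the universal property of a free object directly: for an arbitrary pro-$C^{*}$-algebra $B$ and an arbitrary function $g:\mathcal{X}\rightarrow B$, I must produce a unique continuous $*$-homomorphism $\varphi:\mathbb{F}\langle\mathcal{X}\rangle\rightarrow B$ with $\varphi\circ\iota=g$. Uniqueness is the easy half: $\iota(\mathcal{X})$ algebraically generates a dense $*$-subalgebra of $\mathbb{F}\langle\mathcal{X}\rangle$ (the dense copy of $\mathbb{C}[\mathcal{X}\cup\mathcal{X}^{*}]$ noted above), so any two continuous $*$-homomorphisms agreeing on $\iota(\mathcal{X})$ agree on that dense subalgebra and hence, by continuity, everywhere.

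For existence, write $B=\lim_{\longleftarrow}B_{p}$ over $p\in S(B)$, with the surjections $\pi_{p}:B\rightarrow B_{p}$ and connecting maps $\pi_{q,p}:B_{q}\rightarrow B_{p}$ for $q\geq p$. For each $p$ set $l_{p}(x)=p(g(x))$; this is a function $\mathcal{X}\rightarrow[0,\infty)$. Since $\|\pi_{p}(g(x))\|=p(g(x))=l_{p}(x)$, the function $\pi_{p}\circ g:\mathcal{X}\rightarrow B_{p}$ is a representation of the relation $\|x\|\leq l_{p}(x)$, so the universal property of $F_{l_{p}}\langle\mathcal{X}\rangle$ yields a unique $*$-homomorphism $\psi_{p}:F_{l_{p}}\langle\mathcal{X}\rangle\rightarrow B_{p}$ with $\psi_{p}\circ\iota_{l_{p}}=\pi_{p}\circ g$. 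Set $\varphi_{p}=\psi_{p}\circ\gamma_{l_{p}}:\mathbb{F}\langle\mathcal{X}\rangle\rightarrow B_{p}$; this is continuous, and since $\gamma_{l_{p}}\circ\iota=\iota_{l_{p}}$ we have $\varphi_{p}\circ\iota=\pi_{p}\circ g$.

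Next I would check that $\langle\varphi_{p}\rangle$ is a coherent family, i.e.\ that $\pi_{q,p}\circ\varphi_{q}=\varphi_{p}$ whenever $q\geq p$: both sides are continuous $*$-homomorphisms $\mathbb{F}\langle\mathcal{X}\rangle\rightarrow B_{p}$ sending $\iota(x)$ to $\pi_{p}(g(x))$, so they coincide by the same density argument used for uniqueness. The universal property of the inverse limit then assembles $\langle\varphi_{p}\rangle$ into a continuous $*$-homomorphism $\varphi:\mathbb{F}\langle\mathcal{X}\rangle\rightarrow B$ with $\pi_{p}\circ\varphi=\varphi_{p}$ for all $p$. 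Finally $\pi_{p}\circ(\varphi\circ\iota)=\varphi_{p}\circ\iota=\pi_{p}\circ g$ for every $p\in S(B)$, and since the $\pi_{p}$ jointly separate the points of $B$, this forces $\varphi\circ\iota=g$.

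I do not anticipate a serious obstacle; the construction is the standard bookkeeping around the inverse-limit universal property combined with the already-established universal property of each $F_{l}\langle\mathcal{X}\rangle$. The only points needing a little care are the coherence of the $\varphi_{p}$ (which is precisely where density of $\iota(\mathcal{X})$ is invoked a second time) and the harmless fact that $l_{p}$ may vanish on some generators, so that the relevant $F_{l_{p}}\langle\mathcal{X}\rangle$ simply sets those generators to zero without affecting any step.
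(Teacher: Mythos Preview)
Your argument is correct and follows essentially the same route as the paper's proof: factor through the universal $C^{*}$-algebras $F_{l}\langle\mathcal{X}\rangle$ at each seminorm level, check coherence via uniqueness on generators, and assemble via the inverse-limit universal property. The only cosmetic difference is that the paper first isolates the $C^{*}$-algebra case as a separate paragraph and then invokes it for each $B_{p}$, whereas you inline that step directly; the content is the same.
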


\begin{proof}
First suppose $A$ is a $C^{*}$-algebra. 
For any function $f:\mathcal{X}\rightarrow A$
we can set $l(x)=\| f(x)\|$ and there is a 
$*$-homomorphism 
$\varphi_{l}:F_{l}\langle\mathcal{X}\rangle\rightarrow A$
sending $\iota_{l}(x)$ to $f(a).$ Then $\varphi_{l}\circ\gamma_{l}$
is a continuous $*$-homomorphism that sends $\iota(x)$ to $f(a).$
This is the unique such map since $\iota(\mathcal{X})$ generates
$\mathbb{F}\langle\mathcal{X}\rangle.$ 

Suppose $A$ is a pro-$C^{*}$-algebra and $f:\mathcal{X}\rightarrow A$
is a function. For each $p$ in $S(A)$ there
is a unique continuous $*$-homomorphism
$\varphi_{p}:\mathbb{F}\langle\mathcal{X}\rangle\rightarrow A_{p}$
for which $\varphi_{p}\circ\iota=\pi_{p}\circ f.$ Since
\[
\pi_{p,p^{\prime}}\circ\varphi_{p}\circ\iota 
  =   \pi_{p,p^{\prime}}\circ\pi_{p}\circ f 
   =   \pi_{p^{\prime}}\circ f
\]
we can conclude 
$\pi_{p,p^{\prime}}\circ\varphi_{p}=\varphi_{p^{\prime}}.$
This means there is a continuous 
$*$-homomorphism 
$\varphi:\mathbb{F}\langle\mathcal{X}\rangle\rightarrow A$
so that $\pi_{p}\circ\varphi=\varphi_{p}.$ Therefore
\[
  \pi_{p}(\varphi(\iota(x)))  
= \varphi_{p}(\iota(x)) 
= \pi_{p}(f(x))
\]
and so $\varphi(\iota(x))=f(x).$ 

The uniqueness of $\varphi$ again follows from the fact that 
$\iota(\mathcal{X})$
generates $\mathbb{F}\langle\mathcal{X}\rangle.$
\end{proof}

\begin{lem}
The \emph{pro-}$C^{*}$-algebra 
$\mathbb{F}\langle\mathcal{X}\rangle$
is a $\sigma$-$C^{*}$-algebra if and only if 
$\mathcal{X}$ is finite.
\end{lem}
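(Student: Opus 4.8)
The plan is to reduce metrizability of $\mathbb{F}\langle\mathcal{X}\rangle$ to a cofinality question about the poset of functions $l\colon\mathcal{X}\to[0,\infty)$, ordered pointwise. Recall that $\mathbb{F}\langle\mathcal{X}\rangle=\lim_{\longleftarrow}F_{l}\langle\mathcal{X}\rangle$ with the limit taken over exactly this poset, that the $\gamma_{l}$ are surjections, and set $p_{l}=\|\gamma_{l}(\cdot)\|\in S(\mathbb{F}\langle\mathcal{X}\rangle)$.

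For the ``if'' direction I would simply observe that when $\mathcal{X}$ is finite the constant functions $l_{n}\equiv n$ are cofinal in the index poset (any $l$ has $\max_{x}l(x)<\infty$), so $\mathbb{F}\langle\mathcal{X}\rangle=\lim_{\longleftarrow}F_{l_{n}}\langle\mathcal{X}\rangle$ is a countable inverse limit of $C^{*}$-algebras whose topology is described by the sequence $p_{l_{1}}\le p_{l_{2}}\le\cdots$; hence it is a $\sigma$-$C^{*}$-algebra.

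For the ``only if'' direction I would first record two facts. (a) Since the $\gamma_{l}$ are onto and the bonding maps $\gamma_{l,l'}$ (for $l\ge l'$) are $*$-homomorphisms of $C^{*}$-algebras and so norm-decreasing, $l\ge l'$ implies $p_{l}\ge p_{l'}$; and $p_{l}(\iota(x))=\|\iota_{l}(x)\|=l(x)$, the upper bound being the defining relation of $F_{l}\langle\mathcal{X}\rangle$ and the lower bound coming from the one-dimensional representation $x\mapsto l(x)$, all other generators $\mapsto 0$. Consequently $p_{l}\ge p_{l'}$ iff $l\ge l'$. (b) The family $\{p_{l}\}$ determines the topology of $\mathbb{F}\langle\mathcal{X}\rangle$ and is upward directed via $\max(l,l')$, so every continuous $C^{*}$-seminorm $q$ satisfies $\{q<1\}\supseteq\{p_{l}<\epsilon\}$ for some single $l$ and $\epsilon>0$; thus $q\le\epsilon^{-1}p_{l}$, and the usual $C^{*}$-seminorm iteration $q\le Cp_{l}\Rightarrow q\le p_{l}$ upgrades this to $q\le p_{l}$. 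I would also invoke the standard fact (immediate from the definition of a $\sigma$-$C^{*}$-algebra together with that same iteration) that a pro-$C^{*}$-algebra is a $\sigma$-$C^{*}$-algebra exactly when $S$ of it admits a countable cofinal subset.

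Then the argument closes quickly. A countable cofinal family $\{q_{m}\}\subseteq S(\mathbb{F}\langle\mathcal{X}\rangle)$ yields, by (b), functions $l^{(m)}$ with $q_{m}\le p_{l^{(m)}}$; hence $\{p_{l^{(m)}}\}$ is cofinal in $S(\mathbb{F}\langle\mathcal{X}\rangle)$, and by (a) the $l^{(m)}$ are cofinal in $\bigl([0,\infty)^{\mathcal{X}},\le\bigr)$. If $\mathcal{X}$ is infinite this is impossible: choosing distinct $x_{1},x_{2},\dots\in\mathcal{X}$ and putting $l(x_{n})=l^{(n)}(x_{n})+1$ and $l=0$ elsewhere gives an $l$ with $l\not\le l^{(m)}$ for every $m$. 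I expect the only real friction to lie in fact (a) — pinning down that $p_{l}$ recovers $l$ on the generators and that $l\mapsto p_{l}$ both preserves and reflects the order — after which the remainder is bookkeeping and a one-line diagonalization.
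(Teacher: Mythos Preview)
Your proof is correct. The ``if'' direction matches the paper exactly. For ``only if'' you take a different route: you establish that $l\mapsto p_l$ is an order embedding of $[0,\infty)^{\mathcal{X}}$ onto a cofinal subset of $S(\mathbb{F}\langle\mathcal{X}\rangle)$, so countable cofinality of the latter forces countable cofinality of the former, which a one-line diagonalization rules out for infinite $\mathcal{X}$. The paper instead argues directly in the countably infinite case: assuming a sequence $p_1,p_2,\ldots$ describes the topology, it rescales the generators to form $y_k=\alpha_k\iota(x_k)$ with $p_n(y_k)\le 1/k$ for $k\ge n$ (so $y_k\to 0$), then exploits freeness to build a continuous $*$-homomorphism into $B(\mathbb{H})$ sending each $y_k$ to an operator of norm $1$, a contradiction. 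Your approach is more structural and yields the pleasant byproduct that the seminorm lattice of $\mathbb{F}\langle\mathcal{X}\rangle$ contains an order-isomorphic cofinal copy of $[0,\infty)^{\mathcal{X}}$; the paper's is more hands-on and avoids the order-theoretic bookkeeping at the cost of a slightly delicate rescaling. The key ingredient you flagged in (a) --- that $p_l(\iota(x))=l(x)$ via the one-dimensional representation --- is exactly the point where your argument touches the freeness of $\mathbb{F}\langle\mathcal{X}\rangle$, playing the same role as the paper's construction of the map to $B(\mathbb{H})$.
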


\begin{proof}
Suppose $\mathcal{X}$ is the finite set 
$\{ x_{1},\ldots,x_{n}\}.$
The functions $l_{k}$ defined by $l_{k}(x_{j})=k$ are cofinal among
all functions from $\mathcal{X}$ to $[0,\infty)$. 
Therefore $\mathbb{F}\langle x_{1},\ldots,x_{n}\rangle$
is an inverse limit of a sequence of $C^{*}$-algebras,
\[
\mathbb{F}\langle x_{1},\ldots,x_{n}\rangle
= \lim_{\longleftarrow} 
C^{*}\left\langle
x_{1},\ldots,x_{n}\left| \,\strut
\left\Vert
x_{1}\right\Vert \leq k,\ldots,\left\Vert x_{n}\right\Vert \leq k
\right.
\right\rangle .
\]

For the converse it suffices to show that 
$\mathbb{F}\langle x_{1},x_{2},\ldots\rangle$
is not a $\sigma$-$C^{*}$-algebra. 

Suppose $p_{1},p_{2},\ldots$ is an increasing sequence of $C^{*}$-seminorms
determining the topology of $\mathbb{F}\langle x_{1},x_{2},\ldots\rangle.$
By passing to a subsequence, we are able to
assume $p_{n}(\iota(x_{n})) \neq 0$
for all $n.$  Define
\[
\alpha_{k}=\min_{n\leq k}\left(kp_{n}(\iota(x_{k}))\right)^{-1}
\]
and $y_{n}=\alpha_{n}\iota(x_{n}).$ For $k\geq n$ we have 
$p_{n}(y_{k})\leq\frac{1}{k}.$
Therefore $\lim_{k\rightarrow\infty}y_{k}=0.$ Take any 
sequence $a_{k}$
in $B(\mathbb{H})$ so that $\| a_{k}\|=\alpha_{k}^{-1}.$ There is
a continuous $*$-homomorphism
\[
\varphi:\mathbb{F}\langle x_{1},x_{2},\ldots\rangle
\rightarrow B(\mathbb{H})
\]
with $\varphi(\iota(x_{k}))=a_{k}.$ 
This means $\alpha_{k}a_{k}$
converges to zero, contradicting the fact that 
$\|\alpha_{k}a_{k}\|$
has norm $1.$
\end{proof}

\begin{defn}
Given a set $X,$ the \emph{null pro-$C^{*}$-relation on} $\mathcal{X}$
is the category $\mathcal{F}_{\mathcal{X}}^{\textnormal{pro}C^{*}}$
whose objects are of the form $(j,A),$ where $A$ is a pro-$C^{*}$-algebra
and $j:\mathcal{X}\rightarrow A$ is a function from $\mathcal{X}$
to (the underlying set of) $A.$ As morphisms from $(j,A)$ to $(k,B)$
it has all continuous $*$-homomorphisms $\varphi:A\rightarrow B$
for which $\varphi\circ j=k.$
\end{defn}

\begin{defn}
Given a set $\mathcal{X},$ a \emph{pro-$C^{*}$relation on $\mathcal{X}$}
is full subcategory 
$\mathcal{R}$ of 
$\mathcal{F}_{\mathcal{X}}^{\textnormal{pro}C^{*}}$
such that:
\begin{description}
\item [{PC1}] 
the unique map $\mathcal{X}\rightarrow\{0\}$ is an object;
\item [{PC2}] 
if $\varphi:A\hookrightarrow B$ is the inclusion of a closed
$*$-subalgebra of a pro-$C^{*}$-algebra $B$ and if 
$f:\mathcal{X}\rightarrow A$
is a function, then
\[
f\mbox{ is an object }\Leftarrow\varphi\circ f\mbox{ is an object;}
\]
\item [{PC3}] 
if $\varphi:A\rightarrow B$ is a 
continuous $*$-homomorphism,
and if $f:\mathcal{X}\rightarrow A$ is a function, then
\[
f\mbox{ is an object } \Rightarrow
\varphi\circ f\mbox{ is an object;}
\]
\item [{PC4}] 
if $\Lambda$ is a nonempty set, and if 
$f_{\lambda}:\mathcal{X}\rightarrow A_{\lambda}$
is an object for each $\lambda\in\Lambda,$ then 
\[
\prod f_{\lambda} :
\mathcal{X}
\rightarrow
\prod_{\lambda\in\Lambda}^{\textnormal{pro}C^{*}}A_{\lambda}
\]
is an object.
\end{description}
\end{defn}

Again we will conflate statements with categories and representations
with objects.

\begin{defn}
Suppose $\mathcal{X}$ is a set and $\mathcal{R}$ is 
a pro-$C^{*}$-relation
on $\mathcal{X}.$ A function $\iota:\mathcal{X}\rightarrow U$ from
$\mathcal{X}$ to a pro-$C^{*}$-algebra $U$ is \emph{universal for
$\mathcal{R}$} if:
\begin{description}
\item [{PU1}] 
given a pro-$C^{*}$-algebra $A,$ if $\varphi:U\rightarrow A$
is a continuous $*$-homomorphism  then 
$\varphi\circ\iota:\mathcal{X}\rightarrow A$
is a representation of $\mathcal{R};$ 
\item [{PU2}] 
given a pro-$C^{*}$-algebra $A,$ if a 
function $f:\mathcal{X}\rightarrow A$
is a representation in $\mathcal{R}$ then there is a 
unique $*$-homomorphism
$\varphi:U\rightarrow A$ so that 
$f=\varphi\circ\iota.$
\end{description}
\end{defn}

It should be clear that $\iota$ and $U$ are unique, up to isomorphism.
Also notice that $\iota$ must be a representation.

The definition of a universal representation is again summarized by
the bijection 
\[
\hom(U,A)\rightarrow\mbox{rep}_{\mathcal{R}}(\mathcal{X},A)
\]
defined by $\varphi\mapsto\varphi\circ\iota,$ but now for $A$ any
pro-$C^{*}$-algebra and $\hom(\mbox{--},\mbox{--})$ meaning
the set of continuous $*$-homomorphisms.

\begin{thm}
If $\mathcal{R}$ is a pro-$C^{*}$-relation on $\mathcal{X}$ then
there exists a universal representation for $\mathcal{R}.$ 
\end{thm}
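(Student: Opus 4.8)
The plan is to adapt, almost verbatim, the proof of Theorem~\ref{thm:repsIFFuniversalExists}, with the role of compactness now played by the axiom \textbf{PC4}, which is built into the definition of a pro-$C^{*}$-relation. First I would fix a cardinal bound: a set $S_{1}$ so large that every pro-$C^{*}$-algebra topologically generated by (the image of) a set no larger than $\mathcal{X}$ has underlying set of cardinality at most $|S_{1}|$. From $S_{1}$ I would build $S_{2}$, the set of all pro-$C^{*}$-algebra structures carried by subsets of $S_{1}$ (a $*$-algebra structure together with a topology, hence a set of data bounded by $S_{1}$), then $S_{3}$, the set of functions $\mathcal{X}\to A$ with $A\in S_{2}$, and finally index by $\lambda\in\Lambda$ those $f_{\lambda}:\mathcal{X}\to A_{\lambda}$ in $S_{3}$ that are representations of $\mathcal{R}$ whose image topologically generates $A_{\lambda}$. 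By \textbf{PC1} we have $\Lambda\neq\emptyset$.

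The second step is the factorization statement: every representation $g:\mathcal{X}\to B$ can be written $g=\gamma\circ f_{\lambda}$ for some $\lambda\in\Lambda$, where $\gamma$ is, up to a pro-$C^{*}$-isomorphism, the inclusion of a closed $*$-subalgebra. One corestricts $g$ to the closed $*$-subalgebra $B_{0}$ generated by $g(\mathcal{X})$ (again a pro-$C^{*}$-algebra); by \textbf{PC2} the corestriction $g_{0}$ is a representation, by the cardinality bound $B_{0}$ is pro-$C^{*}$-isomorphic to some $B_{1}\in S_{2}$, and pushing $g_{0}$ across that isomorphism (a representation by \textbf{PC3}) produces $g_{1}=f_{\lambda}$ for a suitable $\lambda$. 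Now set $f=\prod_{\lambda\in\Lambda}f_{\lambda}:\mathcal{X}\to\prod_{\lambda\in\Lambda}^{\textnormal{pro}C^{*}}A_{\lambda}=:P$, a representation by \textbf{PC4}; let $U$ be the closed $*$-subalgebra of $P$ generated by $f(\mathcal{X})$, with $\iota:\mathcal{X}\to U$ the corestriction and $\eta:U\hookrightarrow P$ the inclusion, so $f=\eta\circ\iota$. Then \textbf{PC2} shows $\iota$ is a representation, and \textbf{PU1} is immediate from \textbf{PC3}. For \textbf{PU2}, given a representation $g:\mathcal{X}\to B$, factor $g=\gamma\circ f_{\lambda_{0}}$ and put $\psi=\gamma\circ\rho_{\lambda_{0}}\circ\eta:U\to B$, where $\rho_{\lambda_{0}}$ is the coordinate projection; then $\psi\circ\iota=\gamma\circ\rho_{\lambda_{0}}\circ f=\gamma\circ f_{\lambda_{0}}=g$, and uniqueness holds because two continuous $*$-homomorphisms agreeing on $\iota(\mathcal{X})$ agree on the closed $*$-subalgebra it generates, which is all of $U$.

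The one genuinely new ingredient, and the step I expect to be the main obstacle, is the cardinality bound of the first step. In the $C^{*}$-case a $C^{*}$-algebra generated by $\mathcal{X}$ obviously has bounded cardinality, but here a pro-$C^{*}$-algebra is an inverse limit and one must simultaneously control the quotients and the number of seminorms. Freeness of $\mathbb{F}\langle\mathcal{X}\rangle$ does this: if $A$ is topologically generated by $f(\mathcal{X})$ then the induced continuous $*$-homomorphism $\widehat{f}:\mathbb{F}\langle\mathcal{X}\rangle\to A$ has dense image, so each $p\in S(A)$ restricts to a \emph{distinct} $C^{*}$-seminorm $p\circ\widehat{f}$ on $\mathbb{F}\langle\mathcal{X}\rangle$ (continuous $C^{*}$-seminorms agreeing on a dense set agree), giving $|S(A)|\leq|S(\mathbb{F}\langle\mathcal{X}\rangle)|$; meanwhile each $A_{p}$ is a $C^{*}$-algebra topologically generated by at most $|\mathcal{X}|$ elements, hence of bounded cardinality, and $A$ embeds in $\prod_{p\in S(A)}A_{p}$. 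Combining these estimates bounds $|A|$ by a cardinal depending only on $|\mathcal{X}|$ and $|\mathbb{F}\langle\mathcal{X}\rangle|$, which is exactly what makes $S_{1}$ a set.
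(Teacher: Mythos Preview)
Your proof is correct and follows essentially the same strategy as the paper: both adapt the proof of Theorem~\ref{thm:repsIFFuniversalExists} and use the free pro-$C^{*}$-algebra $\mathbb{F}\langle\mathcal{X}\rangle$ to show that the pro-$C^{*}$-algebras topologically generated by $\mathcal{X}$ form (up to isomorphism) a set. The only packaging difference is that the paper parametrizes this set directly as completions of algebraic quotients of $\mathbb{F}\langle\mathcal{X}\rangle$ by closed two-sided $*$-ideals (with respect to chosen families of $C^{*}$-seminorms on each quotient), whereas you extract a cardinality bound from the injection $S(A)\hookrightarrow S(\mathbb{F}\langle\mathcal{X}\rangle)$ and the embedding $A\hookrightarrow\prod_{p}A_{p}$ and then parametrize by pro-$C^{*}$-structures on subsets of $S_{1}$; these are equivalent bookkeeping devices for the same idea.
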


\begin{proof}
Suppose $g:\mathcal{X}\rightarrow A$ is a representation 
of $\mathcal{R}.$
Let $B$ be the closed $*$-algebra generated by 
$g(\mathcal{X}).$
There is a continuous $*$-homomorphism 
$\varphi:\mathbb{F}\langle\mathcal{X}\rangle\rightarrow B$
so that $\varphi(\iota(x))=g(x).$ By \textbf{PC2}, 
we can corestrict
$g$ to a representation $f_{1}:\mathcal{X}\rightarrow B.$ Let $\kappa$
be the inclusion of $B$ in $A,$ so $\kappa\circ f_{1}=g.$ There
is an open, continuous {*}-algebra isomorphism
\[
\psi : 
\overline{\mathbb{F}\langle\mathcal{X}\rangle/\ker(\varphi)}\rightarrow B
\]
where the completion is with respect to the seminorms 
\[
\varphi(y)+\ker(\varphi) \mapsto
p\left(\varphi(y)\right)
\quad(\mbox{for } p\in S(B).)
\]
By \textbf{PC3}, $f_{2}=\psi^{-1}\circ f_{1}$ is a representation
and $f=\kappa\circ\psi\circ f_{2}.$

The algebraic quotients of $\mathbb{F}\langle\mathcal{X}\rangle$
by closed, two-sided self-adjoint ideals form a set. The collection
of all $C^{*}$-seminorms on each quotient is a set, and so the collection
of all possible completions of quotients of 
$\mathbb{F}\langle\mathcal{X}\rangle$
is a set. Therefore, we can index by a set $\Lambda$ all representation
into these particular pro-$C^{*}$-algebras 
$f_{\lambda}:\mathcal{X}\rightarrow A_{\lambda}$
so that a generic representation $g$ as above factors as 
$g=\gamma\circ f_{\lambda}$
for some continuous $*$-homomorphism $\gamma.$ 

By \textbf{PC1} there are representations, so we know
$\Lambda\neq\emptyset.$

Let
\[
f=\prod_{\lambda\in\Lambda}f_{\lambda} :
\mathcal{X} \rightarrow 
\prod_{\lambda\in\Lambda}^{\textnormal{pro}C^{*}}A_{\lambda}
\]
This is well-defined and a representation by \textbf{PC4}. Let $U$
denote the pro-$C^{*}$-algebra generated by the image of $f$ and let
$\iota:\mathcal{X}\rightarrow U$ be the corestriction of $f.$ The
inclusion of $U$ in the product we call $\eta,$ so 
$f=\eta\circ\iota.$ 

The proof that $\iota$ is universal for 
$\mathcal{R}$ is similar
to the argument given in the proof of 
Theorem~\ref{thm:repsIFFuniversalExists}.
\end{proof}

For notation, the universal pro-$C^{*}$ algebra will be
\[
\textnormal{pro}C^{*}
\left\langle 
\mathcal{X}\left|\mathcal{R}\right.
\right\rangle .
\]

If $A$ is a pro-$C^{*}$-algebra and $\mathcal{R}_{A}$ is defined
on the set $A$ with $\varphi:A\rightarrow B$ considered a representation
if and only if it is a continuous $*$-homomorphism, then $\mathcal{R}_{A}$
is a pro-$C^{*}$-relation and $A$ is isomorphic to 
$\textnormal{pro}C^{*}
\left\langle 
A\left|\mathcal{R}_{A}\right.
\right\rangle .
$
This can easily be made a bit more general.

\begin{lem}
\label{lem:proC*RelationsFromProCstarAlgebra} 
Suppose $f:\mathcal{X}\rightarrow A$
is a function whose image generates the 
pro-$C^{*}$-algebra $A.$
Let $\mathcal{R}_{f}$ be the full subcategory of
$\mathcal{F}_{\mathcal{X}}^{\textnormal{pro}C^{*}}$
for which
\[
\textnormal{rep}_{\mathcal{R}_{f}}(B) =
\left\{ \varphi\circ f\left|\varphi\in\hom(A,B)\right.\right\} .
\]
Then $\mathcal{R}$ is a pro-$C^{*}$-relation and 
\[
\textnormal{pro}C^{*}\left\langle
\mathcal{X}\left|\mathcal{R}_{A_{f}}\right.
\right\rangle 
\cong A,
\]
where the isomorphism sends $\iota(x)$ to $f(x).$
\end{lem}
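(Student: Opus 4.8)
The plan is to show directly that $f\colon\mathcal{X}\to A$ is itself a universal representation for $\mathcal{R}_{f}$; the asserted isomorphism $\textnormal{pro}C^{*}\langle\mathcal{X}\mid\mathcal{R}_{f}\rangle\cong A$ with $\iota(x)\mapsto f(x)$ then follows from the uniqueness of universal representations noted after the definition of \textbf{PU1}--\textbf{PU2}. So there are two tasks: verify that $\mathcal{R}_{f}$ satisfies \textbf{PC1}--\textbf{PC4}, and verify that $f$ satisfies \textbf{PU1}--\textbf{PU2}.

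First I would dispatch the easy axioms by diagram chases. For \textbf{PC1}, the zero $*$-homomorphism $A\to\{0\}$ composed with $f$ is the unique function $\mathcal{X}\to\{0\}$, which therefore lies in $\mathcal{R}_{f}$. For \textbf{PC3}, if $g=\psi\circ f$ with $\psi\in\hom(A,A')$ and $\varphi\colon A'\to B$ is a continuous $*$-homomorphism, then $\varphi\circ g=(\varphi\circ\psi)\circ f$ with $\varphi\circ\psi\in\hom(A,B)$, so $\varphi\circ g$ is a representation. For \textbf{PC4}, if $g_{\lambda}=\psi_{\lambda}\circ f$ for each $\lambda\in\Lambda$, then $\prod g_{\lambda}=\bigl(\prod\psi_{\lambda}\bigr)\circ f$, and the induced $*$-homomorphism $\prod\psi_{\lambda}\colon A\to\prod_{\lambda\in\Lambda}^{\textnormal{pro}C^{*}}A_{\lambda}$ is continuous because its composition with each of the cofinal seminorms $p\circ\rho_{\lambda}$ equals $p\circ\psi_{\lambda}$, which is continuous on $A$.

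The axiom that needs genuine care is \textbf{PC2}. Suppose $\varphi\colon A'\hookrightarrow B$ is the inclusion of a closed $*$-subalgebra, $g\colon\mathcal{X}\to A'$ is a function, and $\varphi\circ g=\psi\circ f$ for some $\psi\in\hom(A,B)$. I want to produce $\chi\in\hom(A,A')$ with $\chi\circ f=g$. The key observation is that $\psi(A)\subseteq A'$: indeed $\psi(f(x))=g(x)\in A'$ for every $x\in\mathcal{X}$, so $\psi$ carries the (algebraic) $*$-algebra generated by $f(\mathcal{X})$ into the $*$-algebra generated by $g(\mathcal{X})$, which lies in $A'$; since $f(\mathcal{X})$ generates $A$ this $*$-algebra is dense, $\psi$ is continuous, and $A'$ is closed, whence $\psi(A)\subseteq A'$. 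Corestricting $\psi$ then yields a continuous $*$-homomorphism $\chi\colon A\to A'$ (the topology on the closed $*$-subalgebra $A'$ being the subspace topology), and from $\varphi\circ\chi\circ f=\psi\circ f=\varphi\circ g$ together with the injectivity of $\varphi$ we get $\chi\circ f=g$. So $g$ is a representation and \textbf{PC2} holds.

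Finally I would check universality of $\iota:=f\colon\mathcal{X}\to A$. It is a representation since $f=\mathrm{id}_{A}\circ f$. \textbf{PU1} holds trivially: for any continuous $*$-homomorphism $\varphi\colon A\to B$, the composite $\varphi\circ f$ is a representation by the very definition of $\mathcal{R}_{f}$. For \textbf{PU2}, if $g\colon\mathcal{X}\to B$ is a representation then $g=\psi\circ f$ for some $\psi\in\hom(A,B)$, so $\varphi=\psi$ satisfies $g=\varphi\circ\iota$; and such $\varphi$ is unique because two continuous $*$-homomorphisms agreeing on $f(\mathcal{X})$ agree on the $*$-algebra it generates, hence on its closure, which is $A$. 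Therefore $f$ is a universal representation for $\mathcal{R}_{f}$, and by uniqueness $\textnormal{pro}C^{*}\langle\mathcal{X}\mid\mathcal{R}_{f}\rangle\cong A$ via $\iota(x)\mapsto f(x)$. The only step I expect to require real attention is the pushforward argument inside \textbf{PC2} (that $\psi$ maps $A$ into $A'$), since it uses the density of the $*$-algebra generated by $f(\mathcal{X})$ together with continuity and closedness; the remaining verifications are formal.
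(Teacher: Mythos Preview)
Your proof is correct and follows essentially the same approach as the paper's: verify \textbf{PC1}--\textbf{PC4} directly and then observe that $f$ itself is universal. Your treatment is in fact slightly more careful than the paper's in two places---you spell out the density argument behind $\psi(A)\subseteq A'$ in \textbf{PC2}, and you justify continuity of $\prod\psi_{\lambda}$ in \textbf{PC4}---where the paper simply asserts the conclusions; and for universality the paper says the bijection holds ``by definition,'' while you correctly note that injectivity uses the generating hypothesis on $f(\mathcal{X})$.
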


\begin{proof}
We know the zero function $A\rightarrow\{0\}$ is in 
$\hom(A,\{0\})$
and so the zero function $\mathcal{X}\rightarrow\{0\}$ 
is a representation.

Suppose $g:\mathcal{X}\rightarrow B$ is a function and 
$\psi:B\hookrightarrow C$
is an embedding of a closed $*$-subalgebra and
$\psi\circ g$ is
a representation. Then $\psi\circ g=\varphi\circ f$ for 
some $\varphi$
in $ \hom(A,C).$ Thus $\varphi(f(\mathcal{X}))\subseteq B$ 
and so
$\varphi(A)\subseteq B$ and $\varphi=\psi\circ\varphi_{0}$ 
for some $\varphi_{0}$ in $\hom(A,B)$ and 
\[
\psi\circ\varphi_{0}\circ f=\psi\circ g.
\]
Since $\psi$ is injective, $\varphi_{0}\circ f=g$ and $g$ is a
representation.

If $g:\mathcal{X}\rightarrow B$ is a representation 
and $\psi$ is
in $\hom(B,C)$ then $g=\varphi\circ f$ for some $\varphi$ in $\hom(A,B).$
Therefore $\psi\circ g=\psi\circ\varphi\circ f$ is a representation.

Suppose $g_{\lambda}:\mathcal{X}\rightarrow B_{\lambda}$ 
is a representation
for all $\lambda\in\Lambda.$ Then $g_{\lambda}=\varphi_{\lambda}\circ f$
for some $\varphi_{\lambda}$ in $\hom(A,B_{\lambda}).$ Then
\[
\prod g_{\lambda}=\left(\prod\varphi_{\lambda}\right)\circ f
\]
is a representation.

For the second statement, we need to show that 
$f:\mathcal{X}\rightarrow A$
is universal. But that says there is a bijection
\[
\hom(A,B) \rightarrow 
\mbox{rep}_{\mathcal{R}_{f}}(\mathcal{X},B)
\]
 defined by $\varphi\mapsto\varphi\circ f,$ and
 this is true by definition.
\end{proof}

\begin{lem}
\label{pro:LCstarRepsClosedUnderInvLimits}
Every pro-$C^{*}$-relation is closed under inverse limits.
\end{lem}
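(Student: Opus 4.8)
The plan is to realize an inverse limit as a closed $*$-subalgebra of a product and then read off the conclusion from \textbf{PC4} and \textbf{PC2}. Concretely, let $(A_{\lambda},\rho_{\lambda^{\prime},\lambda})$ be an inverse system of pro-$C^{*}$-algebras over a directed set $\Lambda,$ and suppose $f_{\lambda}:\mathcal{X}\rightarrow A_{\lambda}$ is a representation of $\mathcal{R}$ for each $\lambda,$ with $\rho_{\lambda^{\prime},\lambda}\circ f_{\lambda^{\prime}}=f_{\lambda}$ whenever $\lambda\preceq\lambda^{\prime}.$ This is exactly the data of an inverse system in $\mathcal{F}_{\mathcal{X}}^{\textnormal{pro}C^{*}}$ all of whose terms lie in $\mathcal{R},$ and its limit in $\mathcal{F}_{\mathcal{X}}^{\textnormal{pro}C^{*}}$ is the pro-$C^{*}$-algebra $A=\lim_{\longleftarrow}A_{\lambda}$ together with the function $f:\mathcal{X}\rightarrow A$ determined by $f(x)=\langle f_{\lambda}(x)\rangle_{\lambda},$ which is a well-defined coherent family precisely because of the compatibility condition. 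Since $\mathcal{R}$ is full, the assertion ``$\mathcal{R}$ is closed under inverse limits'' is the statement that this $f$ is a representation of $\mathcal{R}.$

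First I would apply \textbf{PC4} to conclude that $\prod_{\lambda}f_{\lambda}:\mathcal{X}\rightarrow\prod_{\lambda\in\Lambda}^{\textnormal{pro}C^{*}}A_{\lambda}$ is a representation of $\mathcal{R}.$ Next I would recall that $\lim_{\longleftarrow}A_{\lambda}$ is realized as the set of coherent families inside $\prod_{\lambda\in\Lambda}^{\textnormal{pro}C^{*}}A_{\lambda},$ carrying the subspace topology, and that this is a \emph{closed} $*$-subalgebra of the product: it is a $*$-subalgebra because the coherence conditions are linear and multiplicative and stable under the involution, and it is closed because it is the intersection, over all $\lambda\preceq\lambda^{\prime},$ of the zero sets of the continuous $*$-homomorphisms $a\mapsto\rho_{\lambda^{\prime},\lambda}(\rho_{\lambda^{\prime}}(a))-\rho_{\lambda}(a).$ Writing $\eta:A\hookrightarrow\prod_{\lambda\in\Lambda}^{\textnormal{pro}C^{*}}A_{\lambda}$ for this inclusion, one has $\eta\circ f=\prod_{\lambda}f_{\lambda}.$ Finally I would invoke \textbf{PC2} for the closed $*$-subalgebra inclusion $\eta$: since $\eta\circ f$ is a representation, so is $f,$ which is what was to be shown.

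There is no real obstacle here; the only point that needs care is the bookkeeping in the middle step, namely the identification of the limit of the diagram $(f_{\lambda},A_{\lambda})$ taken in $\mathcal{F}_{\mathcal{X}}^{\textnormal{pro}C^{*}}$ with the corestriction of the product representation $\prod_{\lambda}f_{\lambda}$ to the closed $*$-subalgebra $\lim_{\longleftarrow}A_{\lambda}\subseteq\prod_{\lambda\in\Lambda}^{\textnormal{pro}C^{*}}A_{\lambda}.$ Once that identification is in place, \textbf{PC4} and \textbf{PC2} each contribute a single line. I would also remark that the same device applies to equalizers—the equalizer of two continuous $*$-homomorphisms is a closed $*$-subalgebra, to which \textbf{PC2} applies—so in fact $\mathcal{R}$ is closed under all small limits in $\mathcal{F}_{\mathcal{X}}^{\textnormal{pro}C^{*}},$ not merely under inverse limits.
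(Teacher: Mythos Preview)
Your proof is correct and follows essentially the same approach as the paper: realize the inverse limit as a closed $*$-subalgebra of the pro-$C^{*}$-product, apply \textbf{PC4} to see that $\prod_{\lambda} f_{\lambda}$ is a representation, and then invoke \textbf{PC2} to corestrict. The paper's proof is terser (it simply asserts the conclusion ``by \textbf{PC2} and \textbf{PC4}'' after writing down the corestricted product), but the logic is identical; your additional remark about equalizers and small limits is a pleasant bonus.
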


\begin{proof}
Suppose $\mathcal{R}$ is a pro-$C^{*}$-relation on $\mathcal{X}.$
Suppose we have an inverse system. That is $A_{\lambda}$ 
is a pro-$C^{*}$-algebra
for each $\lambda$ in a directed set $\Lambda$ and there are bonding
maps $\rho_{\lambda,\mu}:A_{\lambda}\rightarrow A_{\mu}$ that are
continuous $*$-homomorphisms whenever $\mu\preceq\lambda.$ Then
the limit can be constructed as
\[
\lim_{\longleftarrow}A_{\lambda} =
\left\{ \left.\left\langle a_{\lambda}\right\rangle
\in\prod_{\lambda\in\Lambda}^{\textnormal{pro}C^{*}}A_{\lambda}
\right|\rho_{\lambda,\mu}(a_{\lambda}) = 
a_{\mu}\mbox{ if }\mu\preceq\lambda\right\} 
\]
and $\rho_{\lambda}:A\rightarrow A_{\lambda}$ defined by
$
\rho_{\lambda}\left(\left\langle a_{\alpha}\right\rangle \right)
=
a_{\lambda}.
$

Given $f_{\lambda}:\mathcal{X}\rightarrow A_{\lambda},$ 
representations
that are coherent in the sense that 
$\rho_{\lambda,\mu}\circ f_{\lambda}=f_{\mu}$
wherever $\mu\preceq\lambda,$ we have a function 
$f:\mathcal{X}\rightarrow A$
define by corestricting the product,
\[
f(x) =
\left\langle f_{\lambda}(x)\right\rangle
\in\lim_{\longleftarrow}A_{\lambda}
\]
and this is a representation by \textbf{PC2} and \textbf{PC4}.
\end{proof}

\begin{prop}
If $\mathcal{R}$ is a
pro-$C^{*}$-relation, then its restriction to $C^{*}$-algebras is
a $C^{*}$-relation. If two pro-$C^{*}$-relations on the same set
have the same restriction to $C^{*}$-algebras then they are equal.
\end{prop}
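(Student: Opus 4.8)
The plan is to prove the two assertions separately, handling the easy direction first and then the uniqueness (rigidity) statement. For the first assertion, suppose $\mathcal{R}$ is a pro-$C^*$-relation on $\mathcal{X}$, and let $\mathcal{R}'$ denote the full subcategory of $\mathcal{F}_{\mathcal{X}}$ whose objects are those representations $f\colon\mathcal{X}\to A$ of $\mathcal{R}$ for which $A$ happens to be a $C^*$-algebra. I would verify \textbf{C1}--\textbf{C4f} one at a time. \textbf{C1} is immediate since $\{0\}$ is a $C^*$-algebra and \textbf{PC1} gives the zero function. For \textbf{C2}, an injective $*$-homomorphism $\varphi\colon A\hookrightarrow B$ of $C^*$-algebras is in particular the inclusion of a closed $*$-subalgebra of a pro-$C^*$-algebra, so \textbf{PC2} applies verbatim. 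For \textbf{C3}, a $*$-homomorphism between $C^*$-algebras is automatically continuous (norm-decreasing), so \textbf{PC3} applies. For \textbf{C4f}, the finite $C^*$-algebraic product $\prod_{j=1}^{n}A_{j}$ agrees with the pro-$C^*$ product $\prod_{j=1}^{n}{}^{\mathrm{pro}C^*}A_{j}$ when all factors are $C^*$-algebras (a finite product of $C^*$-algebras is a $C^*$-algebra and the product topology is the norm topology), so \textbf{PC4} with $\Lambda=\{1,\ldots,n\}$ gives what we need. Hence $\mathcal{R}'$ is a $C^*$-relation.

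For the second assertion, suppose $\mathcal{R}$ and $\mathcal{S}$ are pro-$C^*$-relations on $\mathcal{X}$ whose restrictions to $C^*$-algebras coincide; I want to show $\mathcal{R}=\mathcal{S}$, i.e.\ they have the same objects. By symmetry it suffices to show every representation of $\mathcal{R}$ is a representation of $\mathcal{S}$. Let $f\colon\mathcal{X}\to A$ be a representation of $\mathcal{R}$, with $A$ a pro-$C^*$-algebra. Fix a cofinal set $S\subseteq S(A)$; for each $p\in S$ the map $\pi_p\colon A\to A_p$ is a continuous $*$-homomorphism onto a $C^*$-algebra, so by \textbf{PC3} (for $\mathcal{R}$) the composite $\pi_p\circ f\colon\mathcal{X}\to A_p$ is a representation of $\mathcal{R}$. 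Since $A_p$ is a $C^*$-algebra and $\mathcal{R},\mathcal{S}$ agree on $C^*$-algebras, $\pi_p\circ f$ is a representation of $\mathcal{S}$ as well, for every $p\in S$. Now $A=\lim_{\longleftarrow}A_p$ is the inverse limit of this cofinal system, and the coherence relations $\pi_{q,p}\circ\pi_q=\pi_p$ for $q\geq p$ hold; that is, $f$ itself is the corestriction of the product of the coherent family $\langle\pi_p\circ f\rangle_p$ into the inverse limit. By Lemma~\ref{pro:LCstarRepsClosedUnderInvLimits}, $\mathcal{S}$ is closed under inverse limits, so $f$ is a representation of $\mathcal{S}$. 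This proves $\mathcal{R}\subseteq\mathcal{S}$, and by symmetry equality.

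A small point to be careful about: the inverse-limit argument in the second paragraph needs the indexing set $S$ to be directed, which is automatic since $S(A)$ is closed under pairwise maxima and a cofinal subset can be taken directed; alternatively one may simply take $S=S(A)$, which is directed. One should also note that Lemma~\ref{pro:LCstarRepsClosedUnderInvLimits} as stated takes coherent families over a directed set and produces a representation into the inverse limit, which is exactly the shape of the data we have produced. The main obstacle is conceptual rather than technical: recognizing that a pro-$C^*$-algebra is reconstructed from its $C^*$-algebra quotients $A_p$ together with the bonding maps, so that knowing a relation on all $C^*$-algebras plus closure under inverse limits pins it down on all pro-$C^*$-algebras. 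Once that is seen, the verification is a routine application of \textbf{PC3} and Lemma~\ref{pro:LCstarRepsClosedUnderInvLimits}.
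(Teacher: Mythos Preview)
Your proof is correct and follows essentially the same approach as the paper: the first assertion reduces to the observation that a finite pro-$C^{*}$ product of $C^{*}$-algebras is the ordinary $C^{*}$-product, and the second assertion is exactly the inverse-limit argument via \textbf{PC3} and Lemma~\ref{pro:LCstarRepsClosedUnderInvLimits}. The paper's proof is terser, but your added detail (checking \textbf{C1}--\textbf{C4f} individually and noting that $S(A)$ is directed) is all accurate and does not deviate from the paper's strategy.
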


\begin{proof}
The first statement is clear, since the pro-$C^{*}$ product of a
finite number of $C^{*}$-algebras equals the $C^{*}$ product.

As to the second, every pro-$C^{*}$-algebra is the inverse limit
of $C^{*}$-algebras, so Lemma~\ref{pro:LCstarRepsClosedUnderInvLimits}
applies.
\end{proof}

\begin{prop}
\label{pro:ExtendingCstarRelationsToProCstar} Suppose $\mathcal{R}$
is a $C^{*}$-relation on $\mathcal{X}.$ If we define 
$\hat{\mathcal{R}}$
as the full subcategory of 
$\mathcal{F}_{\mathcal{X}}^{\textnormal{pro}C^{*}},$
where $f:\mathcal{X}\rightarrow A$ is an object if $\pi_{p}\circ f$
is a representation of $\mathcal{R}$ for all $p$ in $S(A),$ then
$\hat{\mathcal{R}}$ is a pro-$C^{*}$-relation extending $\mathcal{R}.$
\end{prop}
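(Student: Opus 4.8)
The plan is to verify the four axioms \textbf{PC1}--\textbf{PC4} for $\hat{\mathcal{R}}$ directly, and then separately check that $\hat{\mathcal{R}}$ restricts to $\mathcal{R}$ on $C^{*}$-algebras. The guiding principle is that a continuous $*$-homomorphism between pro-$C^{*}$-algebras, when passed to the quotients $A_{p}$, becomes a family of ordinary $*$-homomorphisms between $C^{*}$-algebras (Lemmas~\ref{lem:proMorphismEmbeddings}, \ref{lem:proMorphismPicture}), so each pro-$C^{*}$ axiom for $\hat{\mathcal{R}}$ should reduce to the corresponding $C^{*}$ axiom \textbf{C1}--\textbf{C4f} for $\mathcal{R}$ applied componentwise — except that \textbf{PC4} allows infinite products, which is where the genuine work lies.

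First, \textbf{PC1}: the zero pro-$C^{*}$-algebra $\{0\}$ has $S(\{0\})=\{0\}$ and $\{0\}_{0}=\{0\}$, so $\pi_{0}\circ f$ is the zero map into the zero $C^{*}$-algebra, a representation of $\mathcal{R}$ by \textbf{C1}. Next \textbf{PC3}: given a continuous $*$-homomorphism $\varphi:A\to B$ and $f$ an object of $\hat{\mathcal{R}}$, I would use Lemma~\ref{lem:proMorphismPicture} (with $S=S(A)$, $T=S(B)$) to obtain, for each $q\in S(B)$, a $*$-homomorphism $\varphi_{q}:A_{\theta(q)}\to B_{q}$ with $\pi_{q}\circ\varphi=\varphi_{q}\circ\pi_{\theta(q)}$; then $\pi_{q}\circ(\varphi\circ f)=\varphi_{q}\circ(\pi_{\theta(q)}\circ f)$ is a representation by \textbf{C3}, so $\varphi\circ f$ is an object. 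For \textbf{PC2}, when $\varphi:A\hookrightarrow B$ is the inclusion of a closed $*$-subalgebra, Lemma~\ref{lem:proMorphismEmbeddings} gives injective $\varphi_{q}:A_{\theta(q)}\hookrightarrow B_{q}$ with cofinal $\theta$; if $\varphi\circ f$ is an object then $\pi_{q}\circ(\varphi\circ f)=\varphi_{q}\circ(\pi_{\theta(q)}\circ f)$ is a representation for all $q$, and since the $\varphi_{q}$ are injective, \textbf{C2} gives that $\pi_{\theta(q)}\circ f$ is a representation for all $q\in T$; because $\theta(T)$ is cofinal in $S(A)$, the Lemma just before Proposition~\ref{pro:ExtendingCstarRelationsToProCstar} (representations of $\mathcal{R}$ in a pro-$C^{*}$-algebra on a cofinal set push forward to all of $S(A)$) shows $\pi_{p}\circ f$ is a representation for every $p\in S(A)$, i.e.\ $f$ is an object of $\hat{\mathcal{R}}$.

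The main obstacle is \textbf{PC4} with an infinite index set. Suppose $f_{\lambda}:\mathcal{X}\to A_{\lambda}$ are objects and $A=\prod^{\mathrm{pro}C^{*}}_{\lambda}A_{\lambda}$ with projections $\rho_{\lambda}$. By the lemma identifying cofinal seminorms on a pro-$C^{*}$ product, a cofinal family of seminorms on $A$ consists of the $\max(p_{1}\circ\rho_{\lambda_{1}},\dots,p_{n}\circ\rho_{\lambda_{n}})$ with $p_{j}\in S(A_{\lambda_{j}})$, so by the cofinal-set lemma it suffices to check that $\pi_{q}\circ(\prod f_{\lambda})$ is a representation for $q$ of this form. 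For such a $q$ one has $A_{q}\cong \prod_{j=1}^{n}(A_{\lambda_{j}})_{p_{j}}$ (a finite $C^{*}$-product), and under this identification $\pi_{q}\circ(\prod f_{\lambda})$ corresponds to $\prod_{j=1}^{n}(\pi_{p_{j}}\circ f_{\lambda_{j}})$; each factor $\pi_{p_{j}}\circ f_{\lambda_{j}}$ is a representation of $\mathcal{R}$ because $f_{\lambda_{j}}$ is an object of $\hat{\mathcal{R}}$, and then \textbf{C4f} (finite products) gives that the product is a representation. Hence $\prod f_{\lambda}$ is an object of $\hat{\mathcal{R}}$. I expect verifying the isomorphism $A_{q}\cong\prod_{j=1}^{n}(A_{\lambda_{j}})_{p_{j}}$ and that it intertwines the relevant maps to be the only slightly delicate point; it follows from the fact that $q$ factors through the finite subproduct and $\ker q$ is the preimage of $\bigoplus_{j}\ker p_{j}$.

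Finally, $\hat{\mathcal{R}}$ extends $\mathcal{R}$: if $A$ is a $C^{*}$-algebra then $\mathrm{id}_{A}\in S(A)$ up to the canonical seminorm $\|\cdot\|$, and $A_{\|\cdot\|}=A$; so $f$ is an object of $\hat{\mathcal{R}}$ implies $f=\pi_{\|\cdot\|}\circ f$ is a representation of $\mathcal{R}$, and conversely if $f$ is a representation of $\mathcal{R}$ then for any $p\in S(A)$ the map $\pi_{p}:A\to A_{p}$ is a $*$-homomorphism so $\pi_{p}\circ f$ is a representation by \textbf{C3}, whence $f$ is an object of $\hat{\mathcal{R}}$. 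This shows the two categories agree on $C^{*}$-algebras, completing the proof.
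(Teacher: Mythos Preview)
Your proposal is correct and follows essentially the same route as the paper: verify \textbf{PC1}--\textbf{PC4} by reducing to \textbf{C1}--\textbf{C4f} via Lemmas~\ref{lem:proMorphismEmbeddings} and~\ref{lem:proMorphismPicture}, with the key observation for \textbf{PC4} being that the cofinal seminorms on the pro-$C^{*}$ product involve only finitely many coordinates, so that $A_{q}\cong\bigoplus_{j=1}^{n}(A_{\lambda_{j}})_{p_{j}}$ and \textbf{C4f} suffices. The paper makes this last isomorphism explicit by writing down the map $\gamma=\pi_{p_{1}}\circ\rho_{\lambda_{1}}\oplus\cdots\oplus\pi_{p_{n}}\circ\rho_{\lambda_{n}}$ and checking it has kernel $\ker q$, which is exactly the ``slightly delicate point'' you flagged.
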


\begin{proof}
Since quotients take representations to representations, 
$\hat{\mathcal{R}}$
extends $\mathcal{R}.$ Notice also that
$f:\mathcal{X}\rightarrow A$
must be a representation for $\hat{\mathcal{R}}$ 
if $\pi_{p}\circ f$
is a representation of $\mathcal{R}$ for all $p$ in a cofinal set
in $S(A).$

Since $0:\mathcal{X}\rightarrow\{0\}$ is a representation 
in $\mathcal{R}$
it is also a representation in $\hat{\mathcal{R}}.$

Suppose $\varphi:A\hookrightarrow B$ is the inclusion of a closed
$*$-subalgebra of a pro-$C^{*}$-algebra $B$ and 
$f:\mathcal{X}\rightarrow A$
is a function for which $\varphi\circ f$ is a representation 
of $\hat{\mathcal{R}.}$  By Lemma~\ref{lem:proMorphismEmbeddings} 
there is a cofinal function 
$\theta:S(B)\rightarrow S(A)$ and injective
$*$-homomorphisms 
$\varphi_{p}:A_{\theta(p)}\hookrightarrow B_{p}$
so that $\pi_{p}\circ\varphi=\varphi_{p}\circ\pi_{\theta(p)}$ for
all $p$ in $S(B).$ We know that
\[
\pi_{p}\circ\varphi\circ f =
\varphi_{p}\circ\pi_{\theta(p)}\circ f
\]
is a representation of $\mathcal{R},$ and since $\varphi_{p}$ is
injective, also that $\pi_{\theta(p)}\circ f$ is a representation
of $\mathcal{R}.$ Since the image of $\theta$ is cofinal in $S(A),$
we conclude $f$ is a representation of $\mathcal{R}.$

Suppose $\varphi:A\rightarrow B$ is a continuous $*$-homomorphism
and $f:\mathcal{X}\rightarrow A$ is 
a representation of $\hat{\mathcal{R}}.$
By Lemma~\ref{lem:proMorphismPicture} there is a
function $\theta:S(B)\rightarrow S(A)$ and $*$-homomorphisms
$\varphi_{p}:A_{\theta(p)}\rightarrow B_{p}$ so 
that $\pi_{p}\circ\varphi=\varphi_{p}\circ\pi_{\theta(p)}$
for all $p$ in $S(B).$ Since $f$ is a 
representation of $\hat{\mathcal{R}},$
we know $\pi_{\theta(p)}\circ f$ is a 
representation of $\mathcal{R},$
and so 
\[
\pi_{p}\circ\varphi\circ f=\varphi_{p}\circ\pi_{\theta(p)}\circ f
\]
 is a representation of $\mathcal{R}.$ This 
 being true for all $p$
in $S(B),$ we conclude $\varphi\circ f$ is a 
representation of $\hat{\mathcal{R}}.$

Suppose $f_{\lambda}:\mathcal{X}\rightarrow A_{\lambda}$ 
is a representation
of $\hat{\mathcal{R}}$ for each $\lambda$ in a 
nonempty set $\Lambda.$
To show $f=\prod f_{\lambda}$ is a representation of 
$\hat{\mathcal{R}},$
it suffices to show $\pi_{q}\circ f$ is a representation for 
\[
q=\max\left(p_{1}\circ\rho_{\lambda_{1}},\ldots,p_{n} \circ 
\rho_{\lambda_{n}}\right).
\]
Let
\[
A=\prod_{\lambda\in\Lambda}^{\textnormal{pro}C^{*}}A_{\lambda}.
\]
Consider the continuous $*$-homomorphism
\[
\gamma : A \rightarrow
\left(A_{\lambda_{1}}\right)_{p_{n}} 
\oplus \cdots\oplus
\left(A_{\lambda_{n}}\right)_{p_{n}}
\]
 defined as 
\[
\gamma =
\pi_{p_{1}}\circ\rho_{\lambda_{1}}
\oplus\cdots\oplus
\pi_{p_{n}}\circ\rho_{\lambda_{n}}.
\]
This corresponds to the seminorm $q,$ as
\begin{align*}
\left\Vert 
\gamma\left(\left\langle a_{\lambda}\right\rangle _{\lambda}\right)
\right\Vert  
& = 
\left\Vert 
\pi_{p_{1}}\left(a_{\lambda_{1}}\right)\oplus
\cdots
\oplus\pi_{p_{n}}\left(a_{\lambda_{n}}\right)
\right\Vert \\
 & =  
 \max\left(
 \left\Vert \pi_{p_{1}}\left(a_{\lambda_{1}}\right)\right\Vert ,
 \ldots,\left\Vert \pi_{p_{1}}\left(a_{\lambda_{1}}\right)\right\Vert 
 \right)\\
 & =  \max\left(
 p_{1}\left(a_{\lambda_{1}}\right),\ldots,
 p_{n}\left(a_{\lambda_{1}}\right)
 \right)
 \end{align*}
and so we have a $*$-isomorphism
\[
\psi  : A_{q}\rightarrow\left(A_{\lambda_{1}}\right)_{p_{n}}
\oplus\cdots\oplus
\left(A_{\lambda_{n}}\right)_{p_{n}}
\]
 satisfying $\psi\circ\pi_{q}=\gamma.$ Finally 
 \begin{align*}
\pi_{q}\circ f 
& =  \psi^{-1}\circ\gamma\circ f\\
 & =  \psi^{-1}\circ\left(\pi_{p_{1}}\circ\rho_{\lambda_{1}}\circ f
 \oplus\cdots\oplus
 \pi_{p_{n}}\circ\rho_{\lambda_{n}}\circ f\right)\\
 & =  \psi^{-1}\circ\left(\pi_{p_{1}}\circ f_{\lambda_{1}}
 \oplus\cdots\oplus
 \pi_{p_{n}}\circ f_{\lambda_{n}}\right)
 \end{align*}
which means $\pi_{q}\circ f$ is a representation of $\mathcal{R}.$
\end{proof}

\section{Pushouts of Pro-$C^{*}$-algebras}

Recall that a diagram of pro-$C^{*}$-algebras and continuous $*$-homomorphisms
\[
\xymatrix{
C \ar[r] ^{\theta_2} \ar[d] ^{\theta_1} 	& 	B \ar[d] ^{\iota_2} \\
A \ar[r] ^{\iota_1} & D
}
\]
is a pushout (and $D$ an amalgamated free product)
if $\varphi\mapsto(\varphi\circ\iota_{1},\varphi\circ\iota_{2})$
determines a bijection
\[
\hom(D,E) \rightarrow
\left\{ \left.\left(\varphi_{1},\varphi_{2}\right)\in\hom(A,E)
\times
\hom(B,E)
\,\right|\,\strut
\varphi_{1}\circ\theta_{1} = 
\varphi_{2}\circ\theta_{2}\right\} .
\]

By the usual category theory result we know that pushouts must be
unique.

Lemma~\ref{lem:pushoutsExist} extends
\cite[Proposition 1.5.3(1)]{PhillipsProCstarAlg},
showing pushouts exist in full generality.

\begin{lem}
\label{lem:pushoutsExist}
Suppose $A,$ $B$ and $C$ 
are pro-$C^{*}$-algebras
and that 
$\theta_{1}:C\rightarrow A$
and $\theta_{2}:C\rightarrow B$ are continuous $*$-homomorphisms.
Assume $A$ and $B$ are disjoint.
Define $\mathcal{R}$ to have as representations each 
function $f:A\cup B\rightarrow E$
such that $f|_{A}:A\rightarrow E$ and 
$f|_{B}:B\rightarrow E$ are
continuous $*$-homomorphisms and 
$f\circ\theta_{1}=f\circ\theta_{2}.$
Then $\mathcal{R}$ is a pro-$C^{*}$-relation. The
diagram 
\[
\xymatrix{
C \ar[r] ^{\theta_2} \ar[d] ^{\theta_1} 	
	& 	B \ar[d] ^{\iota|_B}  \\
A \ar[r] ^(0.3){\iota|_A}	
		&	C_{\textnormal{pro}}^{*}
		\left\langle A\cup B\left|\mathcal{R}\right.\right\rangle 
}
\]is a pushout. 
\end{lem}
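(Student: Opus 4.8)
The plan is to verify that $\mathcal{R}$ satisfies the four axioms \textbf{PC1}--\textbf{PC4} and then to identify the universal representation provided by the previous theorem with the claimed pushout. First I would check the axioms. For \textbf{PC1}, the unique map $A\cup B\to\{0\}$ restricts to the zero $*$-homomorphisms on $A$ and on $B$, and trivially $0\circ\theta_1=0=0\circ\theta_2$, so it is a representation. For \textbf{PC3}, if $f:A\cup B\to E$ is a representation and $\varphi:E\to F$ is a continuous $*$-homomorphism, then $(\varphi\circ f)|_A=\varphi\circ(f|_A)$ is a composite of continuous $*$-homomorphisms, similarly on $B$, and $\varphi\circ f\circ\theta_1=\varphi\circ f\circ\theta_2$; so $\varphi\circ f$ is a representation. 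For \textbf{PC2}, suppose $\psi:E\hookrightarrow F$ is the inclusion of a closed $*$-subalgebra and $\psi\circ f$ is a representation: then $(\psi\circ f)|_A=\psi\circ(f|_A)$ is a continuous $*$-homomorphism into $F$ with image inside $E$; since $E$ carries the subspace topology and $\psi$ is a topological embedding, $f|_A$ is itself a continuous $*$-homomorphism into $E$, and likewise $f|_B$; the equation $f\circ\theta_1=f\circ\theta_2$ follows by applying the injection $\psi$. For \textbf{PC4}, given representations $f_\lambda:A\cup B\to E_\lambda$, the product map $\prod f_\lambda$ into $\prod^{\mathrm{pro}C^*}E_\lambda$ restricts on $A$ to $\prod(f_\lambda|_A)$, which is a continuous $*$-homomorphism (the projections are continuous $*$-homomorphisms and separate points), similarly on $B$, and the two composites with $\theta_1,\theta_2$ agree componentwise, hence agree.

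Once $\mathcal{R}$ is established as a pro-$C^*$-relation, the theorem on existence of universal representations gives a universal $\iota:A\cup B\to C^*_{\mathrm{pro}}\langle A\cup B\mid\mathcal{R}\rangle=:D$. Set $\iota_1=\iota|_A:A\to D$ and $\iota_2=\iota|_B:B\to D$; since $\iota$ is itself a representation, $\iota_1$ and $\iota_2$ are continuous $*$-homomorphisms and $\iota_1\circ\theta_1=\iota_2\circ\theta_2$, so the square commutes. It remains to show the square has the universal property of a pushout. Given a pro-$C^*$-algebra $E$, the universal property of $\iota$ says $\varphi\mapsto\varphi\circ\iota$ is a bijection from $\hom(D,E)$ to $\mathrm{rep}_\mathcal{R}(A\cup B,E)$. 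So I would exhibit a bijection between $\mathrm{rep}_\mathcal{R}(A\cup B,E)$ and the set of compatible pairs $\{(\varphi_1,\varphi_2)\in\hom(A,E)\times\hom(B,E)\mid\varphi_1\circ\theta_1=\varphi_2\circ\theta_2\}$. This bijection is simply $f\mapsto(f|_A,f|_B)$: a representation $f$ yields such a pair by definition of $\mathcal{R}$; conversely, since $A$ and $B$ are disjoint, a pair $(\varphi_1,\varphi_2)$ glues to a well-defined function $f:A\cup B\to E$ with $f|_A=\varphi_1$, $f|_B=\varphi_2$, and the compatibility condition makes $f$ a representation. Composing these two bijections, and checking that under $f=\varphi\circ\iota$ one has $f|_A=\varphi\circ\iota_1$ and $f|_B=\varphi\circ\iota_2$, shows $\varphi\mapsto(\varphi\circ\iota_1,\varphi\circ\iota_2)$ is a bijection onto the set of compatible pairs, which is exactly the pushout condition.

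The only point requiring genuine care is the verification of \textbf{PC2}: one must use that the inclusion $\psi:E\hookrightarrow F$ of a closed $*$-subalgebra is a \emph{topological} embedding (a homeomorphism onto its image with the subspace topology), so that continuity of $\psi\circ(f|_A)$ into $F$ together with $f(A)\subseteq E$ forces continuity of $f|_A$ into $E$; this is where the hypothesis that $B$ be \emph{closed} in the codomain, i.e. that $\mathcal{R}$ is insensitive only to closed subalgebras, is actually used. The disjointness hypothesis on $A$ and $B$ is used precisely to make the gluing $f\mapsto(f|_A,f|_B)$ a bijection rather than merely a well-defined map (without disjointness, a function on $A\cup B$ would need to agree on the overlap, which would not correspond to an unconstrained pair). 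Everything else is routine diagram-chasing, and no step presents a serious obstacle.
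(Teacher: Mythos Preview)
Your proposal is correct and is precisely the routine verification the paper has in mind; the paper's own proof consists solely of the sentence ``The proof is routine.'' Your checks of \textbf{PC1}--\textbf{PC4} and the identification of the universal representation with the pushout are exactly the details being omitted.
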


\begin{proof}
The proof is routine.
\end{proof}

\begin{lem}
\label{lem:altDescPushouts} Suppose 
\[
\xymatrix{
C \ar[r] ^{\theta_2} \ar[d] ^{\theta_1} 	& 	B \ar[d] ^{\iota_2} \\
A \ar[r] ^{\iota_1} & D
}
\]
a diagram
of pro-$C^{*}$-algebras and continuous $*$-homomorphisms. This is
a pushout if and only if $\iota_{1}(A)\cup\iota_{2}(B)$ generates
$D$ and for every pair
\[
(\varphi_{1},\varphi_{2})\in\hom(A,E)\times\hom(B,E)
\]
such that $\varphi_{1}\circ\theta_{1} 
=\varphi_{2}\circ\theta_{2}$
there exists $\varphi$ in $\hom(D,E)$ with 
$\varphi\circ\iota_{j}=\varphi_{j}.$ 
\end{lem}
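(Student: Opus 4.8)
The plan is to read the equivalence off the defining bijection of a pushout. Write $\Phi_E\colon\hom(D,E)\to P_E$ for the assignment $\varphi\mapsto(\varphi\circ\iota_1,\varphi\circ\iota_2)$, where $P_E$ denotes the set of pairs $(\varphi_1,\varphi_2)\in\hom(A,E)\times\hom(B,E)$ with $\varphi_1\circ\theta_1=\varphi_2\circ\theta_2$; the displayed square is a pushout precisely when $\Phi_E$ is a bijection for every pro-$C^{*}$-algebra $E$. Throughout I take the square to commute — it is drawn as a commutative diagram, and a pushout square commutes in any case — so that $\Phi_E$ really does take values in $P_E$, since $\varphi\circ\iota_1\circ\theta_1=\varphi\circ\iota_2\circ\theta_2$. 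The two conditions on the right of the statement then assert exactly that $\iota_1(A)\cup\iota_2(B)$ generates $D$ and that each $\Phi_E$ is surjective, so the content of the lemma is that, given generation, surjectivity of every $\Phi_E$ already forces bijectivity.

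For the forward implication, assume the square is a pushout. Surjectivity of each $\Phi_E$ is then immediate, so only the generation claim needs proof. Let $D_0$ be the closed $*$-subalgebra of $D$ generated by $\iota_1(A)\cup\iota_2(B)$; it is again a pro-$C^{*}$-algebra, and writing $\kappa\colon D_0\hookrightarrow D$ for the inclusion we corestrict $\iota_j$ to maps $\iota_j^{0}$ with $\iota_j=\kappa\circ\iota_j^{0}$. Since $\kappa$ is injective and $\kappa\circ\iota_1^{0}\circ\theta_1=\iota_1\circ\theta_1=\iota_2\circ\theta_2=\kappa\circ\iota_2^{0}\circ\theta_2$, the pair $(\iota_1^{0},\iota_2^{0})$ lies in $P_{D_0}$, so the pushout property yields $\psi\in\hom(D,D_0)$ with $\psi\circ\iota_j=\iota_j^{0}$. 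Now $\kappa\circ\psi$ and $\mathrm{id}_D$ both have image $(\iota_1,\iota_2)$ under $\Phi_D$, so injectivity of $\Phi_D$ forces $\kappa\circ\psi=\mathrm{id}_D$; hence $\kappa$ is onto and $D_0=D$.

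For the reverse implication, assume $\iota_1(A)\cup\iota_2(B)$ generates $D$ and that each $\Phi_E$ is surjective; it remains to prove each $\Phi_E$ injective. If $\varphi,\varphi'\in\hom(D,E)$ agree after composition with $\iota_1$ and with $\iota_2$, then $\{d\in D:\varphi(d)=\varphi'(d)\}$ — the preimage of the (closed) diagonal of $E\times E$ under the continuous map $d\mapsto(\varphi(d),\varphi'(d))$, and a $*$-subalgebra because $\varphi$ and $\varphi'$ are $*$-homomorphisms — is a closed $*$-subalgebra of $D$ containing $\iota_1(A)\cup\iota_2(B)$, hence equals $D$. Thus $\varphi=\varphi'$, so each $\Phi_E$ is a bijection and the square is a pushout.

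The argument is essentially routine; the one place to be attentive is the forward direction, which invokes the pushout property twice — once with $E=D_0$ to produce $\psi$, and once with $E=D$, using the \emph{uniqueness} half of the universal property, to conclude $\kappa\circ\psi=\mathrm{id}_D$ — and which relies on the standard fact that a closed $*$-subalgebra of a pro-$C^{*}$-algebra is itself a pro-$C^{*}$-algebra, so that $\hom(D,D_0)$ is the correct morphism set to feed into the universal property. One should also record at the outset that a pushout square commutes, so taking the displayed square commutative is no loss of generality in either direction.
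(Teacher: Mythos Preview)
Your proof is correct. The reverse implication matches the paper's argument essentially verbatim: uniqueness of $\varphi$ follows because any two morphisms agreeing on a generating set must coincide.

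The forward implication differs in approach. The paper appeals to Lemma~\ref{lem:pushoutsExist}, which constructs the pushout explicitly as a universal pro-$C^{*}$-algebra on the generating set $A\cup B$; since pushouts are unique up to isomorphism, $D$ is isomorphic to that construction and hence is generated by $\iota_{1}(A)\cup\iota_{2}(B)$. You instead give a self-contained categorical argument: corestrict to the generated closed $*$-subalgebra $D_{0}$, use the universal property once to produce a retraction $\psi:D\to D_{0}$, and use it again (uniqueness with $E=D$) to force $\kappa\circ\psi=\mathrm{id}_{D}$. Your route avoids invoking the explicit construction of Lemma~\ref{lem:pushoutsExist} and is the standard abstract-nonsense proof that the cocone of a colimit is jointly epic; the paper's route is shorter given that the previous lemma is already in hand. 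Both are sound.
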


\begin{proof}
Without loss of generality, $A$ and $B$ are disjoint.

Pushouts are unique. If the diagram is a pushout then up to isomorphism
$D$ is given by generators $A\cup B$ and the relations as in 
Lemma~\ref{lem:pushoutsExist}.
Therefore
\[
\iota(A\cup B)=\iota_{1}(A)\cup\iota_{2}(B)
\]
must generate.

For the converse, we are given the existence of $\varphi$ for compatible
$\varphi_{1}$ and $\varphi_{2}$ and need only show uniqueness. However,
if $\iota_{1}(A)\cup\iota_{2}(B)$ generates, then $\varphi$ is uniquely
determined by $\varphi(\iota_{1}(a))=\varphi_{1}(a)$ and 
$\varphi(\iota_{2}(b))=\varphi_{2}(b).$
\end{proof}

\begin{lem}
\label{lem:CstarAlgsDeterminePushouts} Suppose 
\[
\xymatrix{
C \ar[r] ^{\theta_2} \ar[d] ^{\theta_1} 	& 	B \ar[d] ^{\iota_2} \\
A \ar[r] ^{\iota_1} & D
}
\]
a diagram of pro-$C^{*}$-algebras and continuous $*$-homomorphisms.
The diagram is a pushout if for every $C^{*}$-algebra $E$ and every
pair
\[
(\varphi_{1},\varphi_{2})\in\hom(A,E)\times\hom(B,E)
\]
such that $\varphi_{1}\circ\theta_{1}
=
\varphi_{2}\circ\theta_{2}$
there exists a unique $\varphi$ in $\hom(D,E)$ 
with $\varphi\circ\iota_{j}=\varphi_{j}.$ 
\end{lem}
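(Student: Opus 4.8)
The plan is to check the defining bijection of a pushout directly for an arbitrary pro-$C^{*}$-algebra target $E$, by pushing everything down to the $C^{*}$-algebra quotients $E_{p}$, where the hypothesis applies verbatim. Write $E=\lim_{\longleftarrow}E_{p}$ over $p\in S(E)$, with canonical surjections $\pi_{p}:E\rightarrow E_{p}$ and bonding maps $\pi_{q,p}:E_{q}\rightarrow E_{p}$ for $q\geq p$. Since the square commutes, $\varphi\mapsto(\varphi\circ\iota_{1},\varphi\circ\iota_{2})$ is a well-defined map from $\hom(D,E)$ into the set of pairs $(\varphi_{1},\varphi_{2})\in\hom(A,E)\times\hom(B,E)$ with $\varphi_{1}\circ\theta_{1}=\varphi_{2}\circ\theta_{2}$, and I must show it is a bijection.

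For surjectivity, fix such a compatible pair $(\varphi_{1},\varphi_{2})$. For each $p$ the pair $(\pi_{p}\circ\varphi_{1},\pi_{p}\circ\varphi_{2})$ lands in the $C^{*}$-algebra $E_{p}$ and still satisfies $(\pi_{p}\circ\varphi_{1})\circ\theta_{1}=(\pi_{p}\circ\varphi_{2})\circ\theta_{2}$, so the hypothesis supplies a unique $\psi_{p}\in\hom(D,E_{p})$ with $\psi_{p}\circ\iota_{j}=\pi_{p}\circ\varphi_{j}$ for $j=1,2$. The uniqueness clause is exactly what makes these maps cohere: for $q\geq p$ the map $\pi_{q,p}\circ\psi_{q}$ also satisfies $(\pi_{q,p}\circ\psi_{q})\circ\iota_{j}=\pi_{p}\circ\varphi_{j}$, hence equals $\psi_{p}$. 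By the universal property of the inverse limit the family $(\psi_{p})_{p}$ assembles into a continuous $*$-homomorphism $\psi:D\rightarrow E$ with $\pi_{p}\circ\psi=\psi_{p}$ for all $p$; then $\pi_{p}\circ\psi\circ\iota_{j}=\pi_{p}\circ\varphi_{j}$ for every $p$ forces $\psi\circ\iota_{j}=\varphi_{j}$.

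For injectivity, suppose $\psi,\psi'\in\hom(D,E)$ agree after composing with $\iota_{1}$ and with $\iota_{2}$. For each $p$ the maps $\pi_{p}\circ\psi$ and $\pi_{p}\circ\psi'$ both solve the extension problem over the $C^{*}$-algebra $E_{p}$ for the compatible pair $(\pi_{p}\circ\psi\circ\iota_{1},\pi_{p}\circ\psi\circ\iota_{2})$, so the uniqueness in the hypothesis gives $\pi_{p}\circ\psi=\pi_{p}\circ\psi'$; as this holds for all $p$ and the $\pi_{p}$ separate points of $E$, we get $\psi=\psi'$. This establishes the bijection, so the diagram is a pushout. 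The one point worth stressing is that the mere existence half of the hypothesis would not be enough: it is the uniqueness over $C^{*}$-algebra targets that forces the quotient-level maps $\psi_{p}$ to form a coherent family and hence to glue into a map on $D$. (One could equally route the argument through Lemma~\ref{lem:altDescPushouts}: the surjectivity step produces the required extensions for any pro-$C^{*}$ target, and applying that consequence with $E$ the closed $*$-subalgebra of $D$ generated by $\iota_{1}(A)\cup\iota_{2}(B)$, together with the uniqueness just proved, shows that this subalgebra is all of $D$.)
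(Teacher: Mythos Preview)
Your proof is correct and is precisely the argument the paper's one-line proof (``This follows easily using the universal properties of pushouts and inverse limits'') is gesturing at: the universal property of the inverse limit $E=\lim_{\longleftarrow}E_{p}$ reduces the pushout bijection for a pro-$C^{*}$ target to the assumed bijection for $C^{*}$-algebra targets, with the uniqueness clause supplying the coherence needed to glue the $\psi_{p}$. Your parenthetical alternative via Lemma~\ref{lem:altDescPushouts} is also sound, though unnecessary once the bijection is established directly.
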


\begin{proof}
This follows easily using the universal properties of pushouts and
inverse limits.
\end{proof}

\section{Pushouts in two categories}

First a look at an easy example of a pushout diagram in the category
of $C^{*}$-algebras. Then a method to create pushout diagrams in
the pro-$C^{*}$ category out of a sequence of pushouts in the $C^{*}$
category.

\begin{lem}
\label{lem:pushoutOfCstarSurjections}
Consider the commutative diagram
of $C^{*}$-algebras and $*$-homomorphisms 
\[
\xymatrix{
C \ar[r] ^{\beta} \ar[d] ^{\alpha}
			& B  \ar[d] ^{\gamma}
			\\
A \ar[r] ^{\delta}
			& X 
}
\]If
$\alpha$ and $\beta$ are onto and the square is a pushout then:
\begin{enumerate}
\item 
$\gamma$ and $\delta$ are surjections;
\item 
$\alpha(\ker(\beta))=\ker(\delta);$ 
\item 
given $a$ in $A$ and $b$ in $B$ with $\delta(a)=\gamma(b),$ there
exists $c$ in $C$ with $\alpha(c)=a$ and $\beta(c)=b.$
\end{enumerate}
\end{lem}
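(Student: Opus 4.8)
The plan is to identify the pushout $X$ explicitly as the quotient $A/\alpha(\ker\beta)$, from which all three conclusions fall out. First observe that, because $\alpha$ and $\beta$ are onto and $\gamma\circ\beta=\delta\circ\alpha$, we have $\gamma(B)=\gamma(\beta(C))=\delta(\alpha(C))=\delta(A)$; this small remark is what lets one pass freely between $\gamma$ and $\delta$. Set $I=\alpha(\ker\beta)$. Since $\ker\beta$ is a closed ideal of $C$ and $\alpha$ is a surjective $*$-homomorphism, $I$ is a closed ideal of $A$, so $A/I$ is a $C^{*}$-algebra; write $q\colon A\to A/I$ for the quotient map.

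Next I would build a $*$-homomorphism $\gamma'\colon B\to A/I$ by declaring $\gamma'(\beta(c))=q(\alpha(c))$. This is well defined because $\beta(c)=\beta(c')$ forces $c-c'\in\ker\beta$, hence $\alpha(c)-\alpha(c')\in I$; and it is a $*$-homomorphism since $\alpha$ and $q$ are and $\beta$ is onto. By construction $q\circ\alpha=\gamma'\circ\beta$, so the square with corners $C,B,A,A/I$ and maps $\theta_{1}=\alpha$, $\theta_{2}=\beta$, $\iota_{1}=q$, $\iota_{2}=\gamma'$ commutes. I then claim it is a pushout: given a $C^{*}$-algebra $E$ and a pair $(\varphi_{1},\varphi_{2})\in\hom(A,E)\times\hom(B,E)$ with $\varphi_{1}\circ\alpha=\varphi_{2}\circ\beta$, the map $\varphi_{1}$ annihilates $I$ — for $c\in\ker\beta$ we get $\varphi_{1}(\alpha(c))=\varphi_{2}(\beta(c))=0$ — so $\varphi_{1}$ factors as $\varphi\circ q$ for a unique $\varphi\colon A/I\to E$, and then $\varphi\circ\gamma'=\varphi_{2}$ follows by evaluating on $\beta(C)=B$. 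Uniqueness of $\varphi$ is immediate from surjectivity of $q$.

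Since pushouts are unique, there is an isomorphism $\Theta\colon A/I\to X$ with $\Theta\circ q=\delta$ and $\Theta\circ\gamma'=\gamma$. Conclusion (1) is then immediate: $\delta=\Theta\circ q$ is onto, and $\gamma(B)=\delta(A)=X$ by the first paragraph, so $\gamma$ is onto. Conclusion (2) is equally immediate: $\ker\delta=\ker(\Theta\circ q)=\ker q=I=\alpha(\ker\beta)$. For (3), given $a\in A$ and $b\in B$ with $\delta(a)=\gamma(b)$, first choose $c_{0}\in C$ with $\beta(c_{0})=b$ (possible since $\beta$ is onto); then $\delta(\alpha(c_{0}))=\gamma(\beta(c_{0}))=\gamma(b)=\delta(a)$, so $a-\alpha(c_{0})\in\ker\delta=\alpha(\ker\beta)$, say $a-\alpha(c_{0})=\alpha(c_{1})$ with $c_{1}\in\ker\beta$; then $c=c_{0}+c_{1}$ satisfies $\alpha(c)=a$ and $\beta(c)=b$.

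The only genuinely non-formal input is the standard fact that the image of a closed ideal under a surjective $*$-homomorphism of $C^{*}$-algebras is again a closed ideal (so that $A/I$ is a $C^{*}$-algebra); everything else is a direct check of the pushout universal property together with the abstract uniqueness of pushouts. I expect the verification that $(A/I,q,\gamma')$ is a pushout — in particular noticing that compatibility of $(\varphi_{1},\varphi_{2})$ forces $\varphi_{1}$ to kill $\alpha(\ker\beta)$ — to be the crux; once that is in hand, (1), (2) and (3) are short.
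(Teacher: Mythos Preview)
Your proof is correct and follows essentially the same approach as the paper: both identify the pushout explicitly as a quotient and read off the three conclusions. The paper writes $A=C/K$, $B=C/J$ and takes $X=C/(J+K)$, while you take $X\cong A/\alpha(\ker\beta)$; these are the same object via the third isomorphism theorem, and the only cosmetic difference is that the paper derives (2) as a special case of (3), whereas you establish (2) first and use it to prove (3).
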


\begin{proof}
Without loss of generality, $B=C/J$ and $A=C/K$ for some ideals
$J$ and $K$ of $C.$ Since 
\[
\xymatrix{
C \ar[r] \ar[d] & C/J \ar[d] \\
C/K \ar[r] & C/(J+K)
}
\]is a pushout, and
pushouts are unique, we can also assume
\[
X=C/(J+K).\]
That shows (1).

Notice (2) is a special case of (3).

As to (3), we can assume we have $c$ and $c'$ in $C$ with $c-c'$
in $J+K.$ There are elements $j$ in $J$ and 
$k$ in $K$ with $c-k=c^{\prime}+j.$
Taking $c^{\prime\prime}=c-k$ we have $c^{\prime\prime}$ in $C$
with $c^{\prime\prime}+K=c+K$ and 
$c^{\prime\prime}+J=c^{\prime}+J.$
\end{proof}

\begin{thm}
\label{thm:inverseLimitsOfPushouts} 
Suppose 
\[
\xymatrix{
A_{n+1}  \ar[d] ^{\alpha_{n+1,n}} \ar[r] ^{\rho_{n+1}}
	& B_{n+1} \ar[d] ^{\beta_{n+1,n}}\\
A_n \ar[r] ^{\rho_{n}} 
	& B_n
}
\]
is a pushout in the category of $C^{*}$-algebras for all $n.$ Let
${\displaystyle A=\lim_{\longleftarrow}A_{n}}$ and
${\displaystyle B=\lim_{\longleftarrow}B_{n}}$
with associated maps $\alpha_{n}:A\rightarrow A_{n}$
and $\beta_{n}:B\rightarrow B_{n}.$
Define $\rho:A\rightarrow B$ by 
$\beta_{n}\circ\rho=\rho_{n}\circ\alpha_{n}.$
\begin{enumerate}
\item
If $\alpha_{n+1,n}$ and $\beta_{n+1,n}$ are surjective for all $n$
then the diagram 
\[
\xymatrix{
A  \ar[d] ^{\alpha_{n}} \ar[r] ^{\rho}
	& B \ar[d] ^{\beta_{n}}\\
A_n \ar[r] ^{\rho_{n}} 
	& B_n
}
\]
is a pushout in the category of pro-$C^{*}$-algebras.
\item 
If $\alpha_{n+1,n},$  $\beta_{n+1,n}$ and $\rho_{n}$ are
surjective for all $n$ then $\rho$ is a surjection.
\end{enumerate}
\end{thm}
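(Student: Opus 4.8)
The plan is to prove (1) using Lemma~\ref{lem:CstarAlgsDeterminePushouts}: since the candidate square of pro-$C^{*}$-algebras has $A = \lim A_n$ and $B = \lim B_n$, it suffices to check the universal property against an arbitrary $C^{*}$-algebra $E$. So suppose $\varphi_1 : A \to B_n$ and $\varphi_2 : A_n \to E$... wait, rather: suppose we are given continuous $*$-homomorphisms $\varphi_A : A \to E$ and $\varphi_B : B \to E$ — no; the square to test is $A \to B$ on top, $A \to A_n$ on the left. So I am given $\varphi_1 : B \to E$ and $\varphi_2 : A_n \to E$ with $\varphi_1 \circ \rho = \varphi_2 \circ \alpha_n$, and must produce a unique $\varphi : A_n$-object — no. Let me restate cleanly: the pushout square in (1) has corner $C = A$, going right to $B$ via $\rho$ and down to $A_n$ via $\alpha_n$, with $B$ and $A_n$ both mapping to $B_n$. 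To apply Lemma~\ref{lem:CstarAlgsDeterminePushouts} I fix a $C^{*}$-algebra $E$ and a compatible pair $\psi_1 : B \to E$, $\psi_2 : A_n \to E$ with $\psi_1 \circ \rho = \psi_2 \circ \alpha_n$, and must exhibit a unique $\varphi : B_n \to E$ with $\varphi \circ \rho_n = \psi_2$ and $\varphi \circ \beta_n = \psi_1$.

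The key step is to factor $\psi_1$ through one of the $B_m$. Since $E$ is a $C^{*}$-algebra and $B = \lim B_m$ with all bonding maps $\beta_{m+1,m}$ surjective, Lemma~\ref{lem:mapsProCstarToCstarFactor} applies (via Lemma~\ref{lem:inductiveSystemsReallySetsOfSeminorm}, the $\beta_m$ are themselves surjective), so $\psi_1 = \psi_1' \circ \beta_m$ for some $m$ and some $*$-homomorphism $\psi_1' : B_m \to E$. Replacing $m$ by $\max(m,n)$ and composing with $\beta_{m,n}$ where needed, I may assume $m \geq n$; then I push $\psi_2$ forward along $\alpha_{m,n}$ is the wrong direction — instead I note $\psi_2 \circ \alpha_{m,n} : A_m \to E$ and $\psi_1' \circ \rho_m : A_m \to E$ agree after precomposition with $\alpha_m$ — hmm, I need them to agree on the nose. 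Here I use Lemma~\ref{lem:pushoutOfCstarSurjections}(2): at level $m$ the square with $\alpha_{m,n}, \beta_{m,n}, \rho_m, \rho_n$ is a pushout of surjections, and the compatibility $\psi_1 \circ \rho = \psi_2 \circ \alpha_n$ upstairs forces $\psi_1' \circ \rho_m$ and $\psi_2 \circ \alpha_{m,n}$ to vanish on $\ker(\beta_{m,n})$; then by that pushout's universal property there is a unique $\varphi : B_n \to E$ with $\varphi \circ \rho_n = \psi_2$ and $\varphi \circ \beta_{m,n} \circ (\text{something})$... more carefully, $\varphi$ is obtained from the level-$m$ pushout applied to the descended pair on $A_n$ and $B_m$, and one checks $\varphi \circ \beta_n = \varphi \circ \beta_{m,n} \circ \beta_m = \psi_1' \circ \beta_m = \psi_1$. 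Uniqueness of $\varphi$ follows since $\rho_n(A_n) \cup \beta_n(B)$-images generate, or directly from surjectivity of $\rho_n$ and $\beta_{m,n}$.

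For (2): to show $\rho : A \to B$ is onto, take $b \in B$; I will build a coherent preimage. Since each $\rho_n$ is onto and $\beta_n(b) = b_n \in B_n$, pick $a_1 \in A_1$ with $\rho_1(a_1) = b_1$, and inductively, given $a_n$ with $\rho_n(a_n) = b_n$, I want $a_{n+1} \in A_{n+1}$ with $\alpha_{n+1,n}(a_{n+1}) = a_n$ and $\rho_{n+1}(a_{n+1}) = b_{n+1}$. Choose any $a_{n+1}' \in A_{n+1}$ with $\rho_{n+1}(a_{n+1}') = b_{n+1}$; then $\alpha_{n+1,n}(a_{n+1}')$ and $a_n$ both map under $\rho_n$ to $b_n$, so their difference lies in $\ker(\rho_n)$, which by Lemma~\ref{lem:pushoutOfCstarSurjections}(2) equals $\beta_{n+1,n}(\ker(\rho_{n+1}))$ — no wait, it equals $\alpha_{n+1,n}(\ker(\rho_{n+1}))$. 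Hence there is $k \in \ker(\rho_{n+1})$ with $\alpha_{n+1,n}(k) = \alpha_{n+1,n}(a_{n+1}') - a_n$, and $a_{n+1} := a_{n+1}' - k$ works. The coherent family $\langle a_n \rangle$ defines $a \in A$ with $\rho(a) = b$.

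The main obstacle is getting the levels to line up in part (1): one must factor the pro-morphism $\psi_1$ through a finite stage $B_m$ with $m \geq n$ and then correctly descend the compatibility condition from the inverse-limit level to stage $m$, which is exactly where Lemma~\ref{lem:pushoutOfCstarSurjections}(2) (the kernel identity $\alpha(\ker\beta) = \ker\delta$) does the real work. Part (2) is then a routine diagram chase using the same kernel identity.
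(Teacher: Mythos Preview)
Your approach is essentially the paper's. For Part~(2), your argument via Lemma~\ref{lem:pushoutOfCstarSurjections}(2) is correct and is just an unpacking of the paper's one-line appeal to that lemma (the paper tacitly uses part~(3), which is equivalent here).

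For Part~(1), the strategy matches: factor $\psi_1: B \to E$ through some $\beta_m$, then use pushout properties at finite levels. The paper reduces to $n=1$ and descends from level $m$ to level $1$ one step at a time, applying the given single-step pushout squares inductively; you instead use the composite square with maps $\alpha_{m,n}$, $\beta_{m,n}$, $\rho_m$, $\rho_n$ in one shot. That is fine once you observe (pasting lemma) that a vertical stack of pushout squares is again a pushout, which you assert but do not justify.

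There is one muddled spot. After factoring $\psi_1 = \psi_1' \circ \beta_m$, you need $\psi_1' \circ \rho_m = \psi_2 \circ \alpha_{m,n}$ as maps $A_m \to E$. You correctly note these agree after precomposition with $\alpha_m: A \to A_m$, but then your appeal to Lemma~\ref{lem:pushoutOfCstarSurjections}(2) and the phrase ``vanish on $\ker(\beta_{m,n})$'' are misplaced: the two maps in question have domain $A_m$, not $B_m$, so that kernel is irrelevant. The correct one-line fix is exactly what the paper uses: $\alpha_m$ is surjective (you already noted the limit projections are onto), so agreement after precomposition with $\alpha_m$ gives agreement outright. Lemma~\ref{lem:pushoutOfCstarSurjections} plays no role in Part~(1).
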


\begin{proof}
(1)
It suffices to show that 
\[
\xymatrix{
A  \ar[d] ^{\alpha_{1}} \ar[r] ^{\rho}
	& B \ar[d] ^{\beta_{1}}\\
A_1 \ar[r] ^{\rho_{1}} 
	& B_1
}
\]is a pushout. By Lemma~\ref{lem:CstarAlgsDeterminePushouts} 
we need
only consider a $C^{*}$-algebra $E$ and 
$\varphi:A_{1}\rightarrow E$
and $\psi:B\rightarrow E$ such that 
$\varphi\circ\alpha_{1}=\psi\circ\rho.$
By Lemma~\ref{lem:mapsProCstarToCstarFactor} there is some $n$
and a map $\psi_{n}:B_{n}\rightarrow E$ so that 
$\psi=\psi_{n}\circ\beta_{n}.$
We have
\begin{align*}
\varphi\circ\alpha_{n-1,1}\circ\alpha_{n,n-1}\circ\alpha_{n} 
&=  \varphi\circ\alpha_{1}
 =  \psi\circ\rho
 =  \psi_{n}\circ\beta_{n}\circ\rho\\
 =  \psi_{n}\circ\rho_{n}\circ\alpha_{n},
\end{align*}
and since $\alpha_{n}$ is onto,
\[
\varphi\circ\alpha_{n-1,1}\circ\alpha_{n,n-1} =
\psi_{n}\circ\rho_{n}.
\]
The pushout property of the square involving $\rho_{n}$ 
and $\alpha_{n,n-1}$
tells us there is a $\psi_{n-1}:B_{n-1}\rightarrow E$ 
so that $\psi_{n}=\psi_{n-1}\circ\beta_{n,n-1}.$
Thus $\psi=\psi_{n-1}\circ\beta_{n-1}$ and we are where 
we were before,
but with $n$ decreased by one.

By induction, there is a continuous $*$-homomorphism 
$\psi_{1}:B_{1}\rightarrow E$
with $\psi=\psi_{1}\circ\beta_{1}.$ Also
\[
\varphi\circ\alpha_{1} 
 =  \psi\circ\rho
 =  \psi_{1}\circ\beta_{1}\circ\rho
 =  \psi_{1}\circ\rho_{1}\circ\alpha_{1}
\]
and $\alpha_{1}$ is onto so $\varphi=\psi_{1}\circ\rho_{1}.$ That
takes care of existence.

As to uniqueness, notice that $\rho_{1}(A_{1})$ equals $B_{1}$ so
the equation $\varphi=\psi_{1}\circ\rho_{1}$ makes $\varphi$ unique.

(2)
Given a coherent sequence $b_1, b_2, \ldots$ in $B_1, B_2, \ldots,$
we choose any $a_1$ with $\rho_1(a_1) = b_1.$  Now we repeatedly apply
Lemma~\ref{lem:pushoutOfCstarSurjections}
to find a coherent sequence $a_1, a_2, \ldots$ that is mapped to
$b_1, b_2, \ldots,$  proving the surjectivity of $\rho.$
\end{proof}

\section{Closed Relations}

\begin{defn}
For a set $\mathcal{X},$ 
and given functions 
$f_{\lambda}:\mathcal{X}\rightarrow A_{\lambda}$
into $C^{*}$-algebras $A_{\lambda}$ for each $\lambda$ is a nonempty
set $\Lambda,$ if 
\[
\sup_{\lambda}\left\Vert f_{\lambda}(x)\right\Vert <\infty
\]
for all $x$ then we call $\langle f_{\lambda}\rangle$ 
a \emph{bounded}
family of functions and define
\[
\prod f_{\lambda} :
\mathcal{X}\rightarrow\prod_{\lambda\in\Lambda}^{C^{*}}A_{\lambda}
\]
by
\[
\prod f_{\lambda}(x) =
\left\langle f_{\lambda}(x)\right\rangle _{\lambda\in\Lambda}.
\]

\end{defn}

\begin{defn}
A \emph{$C^{*}$}-relation on \emph{$\mathcal{X}$} is called \emph{closed}
if
\begin{description}
\item [{C4b}] 
if $\Lambda$ is a nonempty set, and if 
$f_{\lambda}:\mathcal{X}\rightarrow A_{\lambda}$
form a bounded family of objects, then
\[
\prod f_{\lambda} :
\mathcal{X}\rightarrow\prod_{\lambda\in\Lambda}A_{\lambda}
\]
is an object.
\end{description}
\end{defn}

Of course, compact implies closed. The intersection 
of a closed $C^{*}$-relation
with a compact $C^{*}$-relation is compact. An arbitrary intersection
of closed $C^{*}$-relations is closed.

Next we offer a sweepingly general functional calculus, as considered
in \cite[\S 4]{Hadwin-Kaonga-Mathes}.

\begin{defn}
\label{def:NCfunction} If $g$ is an 
element of $\mathbb{F}\langle x_{1},\ldots,x_{n}\rangle$
then we can define $g(a_{1},\ldots,a_{n})$ for 
$a_{j}\in A,$ where
$A$ is a pro-$C^{*}$-algebra, by 
\[
g(a_{1},\ldots,a_{n})=\varphi(g)
\]
where 
\[
\varphi:\mathbb{F}\langle x_{1},\ldots,x_{n}\rangle\rightarrow A
\]
is the unique continuous $*$-homomorphism 
defined by $\varphi(x_{j})=a_{j}.$
For example, if
\[
g=\sqrt{\iota(x_{1})^{*}\iota(x_{1})}+\iota(x_{2})
\]
then
\[
g(a_{1},a_{2})=\sqrt{a_{1}^{*}a_{1}}+a_{2}.
\]
This is clearly natural.
\end{defn}

\begin{thm}
\label{thm:freeElementsMakeClosedRelations} 
If $g$ is an element
of $\mathbb{F}\langle x_{1},\ldots,x_{n}\rangle$ 
then 
\[
g(x_{1},\ldots,x_{n})=
0\]
is a  closed $C^{*}$-relation.
\end{thm}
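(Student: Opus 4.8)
The plan is to verify the four axioms \textbf{C1}, \textbf{C2}, \textbf{C3} and the bounded-product axiom \textbf{C4b} directly, using the naturality of the functional calculus from Definition~\ref{def:NCfunction}. The central observation is that for a $C^{*}$-algebra $A$ and a function $f:\{x_{1},\dots,x_{n}\}\rightarrow A$, if $\varphi_{f}:\mathbb{F}\langle x_{1},\dots,x_{n}\rangle\rightarrow A$ is the unique continuous $*$-homomorphism with $\varphi_{f}(\iota(x_{j}))=f(x_{j})$, then $f$ is a representation of $g(x_{1},\dots,x_{n})=0$ precisely when $\varphi_{f}(g)=0$. Most of the axioms then follow by chasing $\varphi_{f}$ through the relevant maps.

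First I would dispose of \textbf{C1}: the zero map $\{x_{1},\dots,x_{n}\}\rightarrow\{0\}$ induces the zero homomorphism, which kills $g$, so it is a representation. For \textbf{C3}, given a $*$-homomorphism $\psi:A\rightarrow B$ and a representation $f$, the composite $\psi\circ\varphi_{f}$ is a continuous $*$-homomorphism sending $\iota(x_{j})$ to $\psi(f(x_{j}))$, hence equals $\varphi_{\psi\circ f}$ by uniqueness; then $\varphi_{\psi\circ f}(g)=\psi(\varphi_{f}(g))=\psi(0)=0$, so $\psi\circ f$ is a representation. For \textbf{C2}, if $\psi:A\hookrightarrow B$ is injective and $\psi\circ f$ is a representation, the same identity $\varphi_{\psi\circ f}=\psi\circ\varphi_{f}$ gives $\psi(\varphi_{f}(g))=0$, and injectivity of $\psi$ forces $\varphi_{f}(g)=0$.

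The one axiom requiring a little care is \textbf{C4b}. Suppose $f_{\lambda}:\{x_{1},\dots,x_{n}\}\rightarrow A_{\lambda}$ is a bounded family of representations and let $A=\prod_{\lambda}^{C^{*}}A_{\lambda}$ with coordinate projections $p_{\lambda}$. Boundedness ensures $\prod f_{\lambda}$ actually lands in $A$, so $\varphi_{\prod f_{\lambda}}:\mathbb{F}\langle x_{1},\dots,x_{n}\rangle\rightarrow A$ is defined. For each $\lambda$, $p_{\lambda}\circ\varphi_{\prod f_{\lambda}}$ is a continuous $*$-homomorphism sending $\iota(x_{j})$ to $f_{\lambda}(x_{j})$, hence equals $\varphi_{f_{\lambda}}$; therefore $p_{\lambda}(\varphi_{\prod f_{\lambda}}(g))=\varphi_{f_{\lambda}}(g)=0$ for every $\lambda$, and since an element of $A$ that is zero in every coordinate is zero, $\varphi_{\prod f_{\lambda}}(g)=0$. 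Thus $\prod f_{\lambda}$ is a representation. I expect the only genuine subtlety here to be confirming that $\varphi_{\prod f_{\lambda}}$ is well-defined as a \emph{continuous} homomorphism into the $C^{*}$-product: this is where boundedness of the family is essential and where one invokes the freeness of $\mathbb{F}\langle x_{1},\dots,x_{n}\rangle$ (the preceding theorem) together with the fact that the norm on $A$ is the supremum of the coordinate norms. Everything else is bookkeeping with the uniqueness clause in the universal property.
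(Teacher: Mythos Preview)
Your proposal is correct and follows essentially the same approach as the paper: both proofs verify \textbf{C1}, \textbf{C2}, \textbf{C3}, and \textbf{C4b} directly by exploiting the naturality and uniqueness of the induced continuous $*$-homomorphism from $\mathbb{F}\langle x_{1},\dots,x_{n}\rangle$. The only minor remark is that the ``subtlety'' you flag in \textbf{C4b} is milder than you suggest: boundedness is needed solely so that $\prod f_{\lambda}$ lands in the $C^{*}$-product $A$, after which the existence and continuity of $\varphi_{\prod f_{\lambda}}$ are immediate from the freeness of $\mathbb{F}\langle x_{1},\dots,x_{n}\rangle$ applied to the $C^{*}$-algebra $A$.
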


\begin{proof}
The only $*$-homomorphism from 
$\mathbb{F}\langle x_{1},\ldots,x_{n}\rangle$
to $\{0\}$ is the zero map $\zeta$, and so $\zeta(g)=0$ and so
the zero map from $\{ x_{1},\ldots,x_{2}\}$ to $\{0\}$ 
is a representation.

If $\varphi:A\rightarrow B$ is an injective $*$-homomorphism, and
if $f:\mathcal{X}\rightarrow A$ is a 
function so that $\varphi\circ f$
is a representation, then
\[
\varphi(g(f(x_{1}),\ldots,f(x_{n}))) = 
g(\varphi(f(x_{1})),\ldots,\varphi(f(x_{n})))
= 0
\]
so 
\[
g(f(x_{1}),\ldots,f(x_{n}))=0
\]
and $f$ is also a representation.

If $\varphi:A\rightarrow B$ is a 
$*$-homomorphism, and
if $f:\mathcal{X}\rightarrow A$ is a representation, then
\[
g(\varphi(f(x_{1})),\ldots,\varphi(f(x_{n}))) =
\varphi(g(f(x_{1}),\ldots,f(x_{n})))=0
\]
and so $\varphi\circ f$ is a representation.

Suppose $\Lambda$ is a nonempty set and that 
$f_{\lambda}:\mathcal{X}\rightarrow A_{\lambda}$
form a bounded family of relations. Let 
\[
f=\prod f_{\lambda} :
\mathcal{X}\rightarrow\prod_{\lambda\in\Lambda}^{C^{*}}A_{\lambda}.
\]
Let
\[
\varphi_{\lambda} :
\mathbb{F}\langle x_{1},\ldots,x_{n}\rangle\rightarrow A_{\lambda}
\]
and 
\[
\Phi :
\mathbb{F}\langle x_{1},\ldots,x_{n}\rangle \rightarrow
\prod_{\lambda\in\Lambda}^{C^{*}}A_{\lambda}
\]
be the associated continuous $*$-homomorphisms. Let 
$\rho_{\lambda}$
be the coordinate morphism, so that 
$\rho_{\lambda}\circ\Phi=\varphi_{\lambda}.$
In particular,
\[
\rho_{\lambda}\circ\Phi(g)=\varphi_{\lambda}(g)=0
\]
and so 
\[
g(f(x_{1}),\ldots,f(x_{n}))=\Phi(g)=0.
\]
 Therefore $f$ is a representation.
\end{proof}

Not all closed relations are best described by 
setting an element of $\mathbb{F}\langle x_{1},\ldots,x_{n}\rangle$ 
to zero. 

\begin{example}
If $p$ is a noncommutative $*$-polynomial in $n$ variables with
zero constant term and $C$ is a positive constant then
\[
\left\| p(x_1,\ldots,x_n) \right\| \leq C
\]
is a closed $C^*$-relation.
\end{example}

\begin{example}
The  inequality
\[
\| x \|< 1
\]
is a $C^*$-relation that is not closed.
\end{example}

\begin{example}
Let $\mathcal{X}$ denote a copy of $[0,1],$
\[
\mathcal{X} = \left \{ x_t \left |\, t \in [0,1] \right.  \right \}
\]
The statement
\[
  t \mapsto x_t \mbox{ is continuous }
\]
is a $C^*$-relation that is not closed.  This example and variations
are discussed in \cite[\S 1.3]{PhillipsProCstarAlg}.
\end{example}

We want something like a universal representation, but technically
not a representation since the function $\iota$ might not take $\mathcal{X}$
into a $C^{*}$-algebra.

\begin{defn}
If $\mathcal{X}$ is a set and $\mathcal{R}$ is a full subcategory
of $\mathcal{F}_{X},$ then a function $\iota:\mathcal{X}\rightarrow U$
from $\mathcal{X}$ to a pro-$C^{*}$-algebra $U$ is \emph{ubiquitous
for $\mathcal{R}$} if:
\begin{description}
\item [{UB1}] 
given a $C^{*}$-algebra $A,$ if $\varphi:U\rightarrow A$
is a continuous $*$-homomorphism then
$\varphi\circ\iota:\mathcal{X}\rightarrow A$
is a representation in $\mathcal{R};$ 
\item [{UB2}] 
given a $C^{*}$-algebra $A,$ if a
function $f:\mathcal{X}\rightarrow A$
is a representation in $\mathcal{R}$ then there is a unique continuous
$*$-homomorphism $\varphi:U\rightarrow A$ so that $f=\varphi\circ\iota.$
\end{description}
\end{defn}

\begin{lem}
\label{lem:ubiquitousIFFuniversal} Every $C^{*}$-relation
$\mathcal{R}$
has an ubiquitous function, namely the universal representation of
the extension $\hat{\mathcal{R}}$ of $\mathcal{R}$ 
to a pro-$C^{*}$-relation.
\end{lem}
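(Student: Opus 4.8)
The plan is to produce the ubiquitous function for $\mathcal{R}$ by first passing to the associated pro-$C^{*}$-relation $\hat{\mathcal{R}}$ of Proposition~\ref{pro:ExtendingCstarRelationsToProCstar}, then invoking the existence theorem for universal representations of pro-$C^{*}$-relations. Concretely, let $\iota:\mathcal{X}\rightarrow U=\textnormal{pro}C^{*}\left\langle \mathcal{X}\left|\hat{\mathcal{R}}\right.\right\rangle$ be the universal representation of $\hat{\mathcal{R}}$, which exists by the theorem asserting that every pro-$C^{*}$-relation has a universal representation. I claim this $\iota$ is ubiquitous for $\mathcal{R}$.

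First I would verify \textbf{UB1}. Suppose $A$ is a $C^{*}$-algebra and $\varphi:U\rightarrow A$ is a continuous $*$-homomorphism. Since $A$ is in particular a pro-$C^{*}$-algebra, property \textbf{PU1} for the universal representation of $\hat{\mathcal{R}}$ gives that $\varphi\circ\iota$ is a representation of $\hat{\mathcal{R}}$. But $\hat{\mathcal{R}}$ restricted to $C^{*}$-algebras is exactly $\mathcal{R}$: by Proposition~\ref{pro:ExtendingCstarRelationsToProCstar} the relation $\hat{\mathcal{R}}$ extends $\mathcal{R}$, and for a $C^{*}$-algebra $A$ the only continuous $C^{*}$-seminorm on $A$ relevant up to cofinality is the norm itself (with $\pi_{\|\cdot\|}=\mathrm{id}_{A}$), so $f$ is a representation of $\hat{\mathcal{R}}$ in $A$ precisely when $f$ is a representation of $\mathcal{R}$ in $A$. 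Hence $\varphi\circ\iota$ is a representation of $\mathcal{R}$, as required.

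Next I would verify \textbf{UB2}. Suppose $A$ is a $C^{*}$-algebra and $f:\mathcal{X}\rightarrow A$ is a representation of $\mathcal{R}$. By the identification just made, $f$ is a representation of $\hat{\mathcal{R}}$ in the pro-$C^{*}$-algebra $A$, so property \textbf{PU2} of the universal representation of $\hat{\mathcal{R}}$ produces a unique continuous $*$-homomorphism $\varphi:U\rightarrow A$ with $f=\varphi\circ\iota$. This is exactly the existence-and-uniqueness clause of \textbf{UB2}. Combining the two verifications, $\iota:\mathcal{X}\rightarrow U$ is ubiquitous for $\mathcal{R}$.

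The only subtle point — and the step I would treat most carefully — is the claim that a function into a $C^{*}$-algebra is a representation of $\hat{\mathcal{R}}$ if and only if it is a representation of $\mathcal{R}$; everything else is a mechanical translation through the universal property already established for pro-$C^{*}$-relations. For the ``only if'' direction this is immediate from $\pi_{\|\cdot\|}=\mathrm{id}$, and for the ``if'' direction it is the statement (noted in the proof of Proposition~\ref{pro:ExtendingCstarRelationsToProCstar}) that $\hat{\mathcal{R}}$ extends $\mathcal{R}$, i.e. that quotients carry representations of $\mathcal{R}$ to representations of $\mathcal{R}$ so that $\pi_{p}\circ f$ is a representation of $\mathcal{R}$ for every $p\in S(A)$ whenever $f$ is. Once that equivalence is in hand, the lemma follows.
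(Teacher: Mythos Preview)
Your proof is correct and follows essentially the same route as the paper: take the universal representation of $\hat{\mathcal{R}}$ and check \textbf{UB1} and \textbf{UB2} by specializing \textbf{PU1} and \textbf{PU2} to $C^{*}$-algebras, using that $\hat{\mathcal{R}}$ restricted to $C^{*}$-algebras is $\mathcal{R}$. Your treatment is more explicit than the paper's about why the restriction of $\hat{\mathcal{R}}$ to $C^{*}$-algebras coincides with $\mathcal{R}$, but the argument is otherwise identical.
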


\begin{proof}
Proposition~\ref{pro:ExtendingCstarRelationsToProCstar} assures
us that $\hat{\mathcal{R}}$ exists. 
Consider the universal representation
$\iota:\mathcal{X}\rightarrow U$ of 
$\hat{\mathcal{R}}.$ Suppose
$A$ is a $C^{*}$-algebra. If $\varphi:U\rightarrow A$ 
is a continuous
$*$-homomorphism then $\varphi\circ\iota$ is in $\hat{\mathcal{R}}$
and so in $\mathcal{R}.$ If a function $f:\mathcal{X}\rightarrow A$
is a representation in $\mathcal{R}$ then it is a representation
in $\hat{\mathcal{R}},$ so there is a unique continuous 
$*$-homomorphism
$\varphi:U\rightarrow A$ so that $f=\varphi\circ\iota.$
\end{proof}

\begin{lem}
The ubiquitous function for a $C^{*}$-relation is unique.
\end{lem}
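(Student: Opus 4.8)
The plan is to show that any two ubiquitous functions for a fixed $C^{*}$-relation $\mathcal{R}$ are isomorphic as objects over $\mathcal{X}$, in exactly the same way one proves uniqueness of a universal representation. Suppose $\iota_{1}:\mathcal{X}\rightarrow U_{1}$ and $\iota_{2}:\mathcal{X}\rightarrow U_{2}$ are both ubiquitous for $\mathcal{R}$. The obvious idea is to feed each one into the defining property of the other, but there is an immediate obstacle: \textbf{UB1} and \textbf{UB2} only speak about continuous $*$-homomorphisms into $C^{*}$-algebras $A$, whereas $U_{1}$ and $U_{2}$ are pro-$C^{*}$-algebras, so we cannot directly apply \textbf{UB2} to the function $\iota_{2}:\mathcal{X}\rightarrow U_{2}$ because $\iota_{2}$ need not be a representation (its codomain is not a $C^{*}$-algebra).

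To get around this, I would invoke Lemma~\ref{lem:ubiquitousIFFuniversal} and Proposition~\ref{pro:ExtendingCstarRelationsToProCstar}: every ubiquitous function is (up to the usual identifications) a universal representation of the pro-$C^{*}$-relation $\hat{\mathcal{R}}$. More precisely, I would first argue that if $\iota:\mathcal{X}\rightarrow U$ is ubiquitous for $\mathcal{R}$, then it is universal for $\hat{\mathcal{R}}$. For \textbf{PU1}, given a continuous $*$-homomorphism $\varphi:U\rightarrow A$ with $A$ a pro-$C^{*}$-algebra, I need $\varphi\circ\iota$ to be a representation of $\hat{\mathcal{R}}$; by the definition of $\hat{\mathcal{R}}$ it suffices to check that $\pi_{p}\circ\varphi\circ\iota$ is a representation of $\mathcal{R}$ for each $p\in S(A)$, and this follows from \textbf{UB1} applied to the continuous $*$-homomorphism $\pi_{p}\circ\varphi:U\rightarrow A_{p}$ into the $C^{*}$-algebra $A_{p}$. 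For \textbf{PU2}, given a representation $f:\mathcal{X}\rightarrow A$ of $\hat{\mathcal{R}}$ with $A$ a pro-$C^{*}$-algebra, I use that for each $p\in S(A)$ the function $\pi_{p}\circ f:\mathcal{X}\rightarrow A_{p}$ is a representation of $\mathcal{R}$, so \textbf{UB2} yields a unique continuous $\varphi_{p}:U\rightarrow A_{p}$ with $\pi_{p}\circ f=\varphi_{p}\circ\iota$; the uniqueness forces the $\varphi_{p}$ to be compatible with the bonding maps $\pi_{q,p}$ (since $\pi_{q,p}\circ\varphi_{q}\circ\iota=\pi_{q,p}\circ\pi_{q}\circ f=\pi_{p}\circ f=\varphi_{p}\circ\iota$), hence they assemble into a continuous $*$-homomorphism $\varphi:U\rightarrow A=\varprojlim A_{p}$ with $\pi_{p}\circ\varphi=\varphi_{p}$, and one checks $\varphi\circ\iota=f$ on the nose by comparing images under each $\pi_{p}$; uniqueness of $\varphi$ follows because $\iota(\mathcal{X})$ generates $U$.

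Once both $\iota_{1}$ and $\iota_{2}$ are recognized as universal representations of the single pro-$C^{*}$-relation $\hat{\mathcal{R}}$, the conclusion is the standard category-theoretic uniqueness of initial objects: \textbf{PU2} applied to $\iota_{2}$ (now legitimately, since its codomain $U_{2}$ is a pro-$C^{*}$-algebra) gives a unique continuous $*$-homomorphism $\alpha:U_{1}\rightarrow U_{2}$ with $\iota_{2}=\alpha\circ\iota_{1}$, and symmetrically a unique $\beta:U_{2}\rightarrow U_{1}$ with $\iota_{1}=\beta\circ\iota_{2}$; then $\beta\circ\alpha$ and the identity both satisfy the defining property of \textbf{PU2} for $\iota_{1}$ relative to itself, so $\beta\circ\alpha=\mathrm{id}_{U_{1}}$, and likewise $\alpha\circ\beta=\mathrm{id}_{U_{2}}$, so $\alpha$ is an isomorphism intertwining $\iota_{1}$ and $\iota_{2}$. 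The main obstacle is really the one addressed in the second paragraph — bridging from the $C^{*}$-only conditions \textbf{UB1}/\textbf{UB2} to a statement about maps into pro-$C^{*}$-algebras — and the cleanest route is exactly to route everything through $\hat{\mathcal{R}}$ and its universal representation, after which uniqueness is formal.
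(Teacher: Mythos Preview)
Your proposal is correct and follows essentially the same route as the paper: show that any ubiquitous function for $\mathcal{R}$ is in fact a universal representation of the pro-$C^{*}$-relation $\hat{\mathcal{R}}$, after which uniqueness is the standard initial-object argument. Your treatment is arguably more complete, since you also verify \textbf{PU1}, which the paper's proof leaves implicit.

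One small remark: your justification for uniqueness of $\varphi$ in \textbf{PU2} appeals to the claim that $\iota(\mathcal{X})$ generates $U$, but this is not given a priori for an \emph{arbitrary} ubiquitous $\iota$ (it is only known for the particular one constructed in Lemma~\ref{lem:ubiquitousIFFuniversal}). The paper avoids this by arguing directly from \textbf{UB2}: if $\varphi'\circ\iota=f$ then $\pi_{p}\circ\varphi'\circ\iota=\pi_{p}\circ f$ for each $p$, so the uniqueness clause of \textbf{UB2} forces $\pi_{p}\circ\varphi'=\varphi_{p}=\pi_{p}\circ\varphi$, and hence $\varphi'=\varphi$. You can patch your argument the same way.
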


\begin{proof}
We will show that a function $f:\mathcal{X}\rightarrow U$ that is
ubiquitous for $\mathcal{R}$ is universal for $\hat{\mathcal{R}}.$

Suppose $f:\mathcal{X}\rightarrow A$ is a representation 
of $\hat{\mathcal{R}}.$
Then for all $p$ in $S(A),$ the composition $\pi_{p}\circ f$ is
a representation of $\mathcal{R}.$ For each $p$ there is a unique
continuous $*$-homomorphism $\varphi_{p}:U\rightarrow A_{p}$ so that 
$\varphi_{p}\circ\iota=\pi_{p}\circ f.$
If $p^{\prime}\geq p$ then
\[
\pi_{p^{\prime},p}\circ\varphi_{p^{\prime}}\circ\iota = 
\pi_{p^{\prime}}\circ f
\]
and so, by uniqueness, 
$\pi_{p^{\prime},p}\circ\varphi_{p^{\prime}}=\pi_{p}.$
There is, therefore, a unique continuous $*$-homomorphism 
$\varphi:U\rightarrow A$
such that $\pi_{p}\circ\varphi=\varphi_{p}.$ 
Therefore $\pi_{p}\circ\varphi\circ\iota=\pi_{p}\circ f$
for all $p,$ and so $\pi_{p}\circ\varphi=f.$

If $\varphi^{\prime}\circ\iota=f$ 
then $\pi_{p}\circ\varphi^{\prime}\circ\iota=\pi_{p}\circ f,$
and so by the uniqueness of the $\varphi_{p}$ we 
have $\pi_{p}\circ\varphi^{\prime}=\varphi_{p}.$
Therefore $\pi_{p}\circ\varphi^{\prime}=\pi_{p}\circ\varphi$ for
all $p,$ and so $\varphi^{\prime}=\varphi.$
\end{proof}

\begin{thm}
Suppose $\mathcal{X}$ is finite. If $\mathcal{R}$ is a closed $C^{*}$-relation
on $\mathcal{X}$ then there exists a function $\iota:\mathcal{X}\rightarrow U$
such that:
\begin{enumerate}
\item 
$\iota$ is ubiquitous for $\mathcal{R}$ 
and $U$ is a $\sigma$-$C^{*}$-algebra;
\item 
the induced continuous $*$-homomorphism 
$\bar{\iota}:\mathbb{F}\langle\mathcal{X}\rangle\rightarrow U$
is onto and induces an 
isomorphism $U\cong\mathbb{F}\langle\mathcal{X}\rangle/I$
for $I=\ker(\bar{\iota});$
\item 
there is a single element $g$ of $\mathbb{F}\langle\mathcal{X}\rangle$
so that
\[
U \cong 
\textnormal{\textnormal{pro}}C^{*}\left\langle
\mathcal{X}\left|g(x_{1},\ldots,x_{n})=0\right.
\right\rangle .
\]

\end{enumerate}
\end{thm}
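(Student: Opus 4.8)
The plan is to realize $U$ as the sequential inverse limit of the compact ``truncations'' of $\mathcal{R}$, and then to push the resulting presentation down onto $\mathbb{F}\langle\mathcal{X}\rangle$. Throughout, write $\mathcal{X}=\{x_{1},\dots,x_{n}\}$ and $F_{k}\langle\mathcal{X}\rangle=C^{*}\langle\mathcal{X}\mid\|x_{i}\|\leq k\rangle$, so that, $\mathcal{X}$ being finite, $\mathbb{F}\langle\mathcal{X}\rangle=\lim_{\longleftarrow}F_{k}\langle\mathcal{X}\rangle$ with surjective structure maps.

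For (1), I would fix, for each positive integer $k$, the intersection $\mathcal{R}_{k}$ of $\mathcal{R}$ with the relation $\|x_{i}\|\leq k$ $(1\leq i\leq n)$. Axioms \textbf{C1}--\textbf{C3} for $\mathcal{R}_{k}$ are routine, since injective $*$-homomorphisms are isometric and arbitrary ones are norm-decreasing; and \textbf{C4} holds \emph{precisely} because any family of representations of $\mathcal{R}_{k}$ is automatically bounded, so \textbf{C4b} for the closed relation $\mathcal{R}$ gives that the product is a representation of $\mathcal{R}$, still with norms $\leq k$. Thus $\mathcal{R}_{k}$ is compact, and by Theorem~\ref{thm:repsIFFuniversalExists} it has a universal representation $\iota_{k}\colon\mathcal{X}\to W_{k}$, $W_{k}=C^{*}\langle\mathcal{X}\mid\mathcal{R}_{k}\rangle$. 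A representation of $\mathcal{R}_{k}$ is one of $\mathcal{R}_{k+1}$, so there are canonical (necessarily surjective, as their ranges generate) $*$-homomorphisms $W_{k+1}\to W_{k}$ intertwining the $\iota_{k}$; set $U=\lim_{\longleftarrow}W_{k}$, a $\sigma$-$C^{*}$-algebra with surjective structure maps $\rho_{k}\colon U\to W_{k}$, and let $\iota\colon\mathcal{X}\to U$ satisfy $\rho_{k}\circ\iota=\iota_{k}$, so that $\iota(\mathcal{X})$ generates $U$. That $\iota$ is ubiquitous for $\mathcal{R}$ follows from two facts: any continuous $*$-homomorphism from $U$ to a $C^{*}$-algebra factors through some $W_{k}$ by Lemma~\ref{lem:mapsProCstarToCstarFactor}, giving \textbf{UB1}; and any representation $f$ of $\mathcal{R}$ in a $C^{*}$-algebra has $\|f(x_{i})\|\leq k$ for some $k$, so the universal property of $W_{k}$ supplies the map of \textbf{UB2}, which is unique because $\iota(\mathcal{X})$ generates $U$.

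For (2), adjoining the relation $\mathcal{R}$ to $F_{k}\langle\mathcal{X}\rangle$ gives surjections $q_{k}\colon F_{k}\langle\mathcal{X}\rangle\to W_{k}$ commuting with all bonding maps, and $\bar\iota=\lim_{\longleftarrow}q_{k}$. The key point is that each square with horizontal arrows $q_{k+1}$ and $q_{k}$ and vertical arrows the bonding maps is a pushout: comparing universal properties (as in Lemma~\ref{lem:pushoutsExist}), a compatible pair of $*$-homomorphisms out of $W_{k+1}$ and out of $F_{k}\langle\mathcal{X}\rangle$ is exactly a representation of $\mathcal{R}$ with norms both $\leq k+1$ and $\leq k$, i.e.\ a representation of $\mathcal{R}_{k}$, i.e.\ a $*$-homomorphism out of $W_{k}$. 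Since all the $q_{k}$ and all the bonding maps are surjective, Theorem~\ref{thm:inverseLimitsOfPushouts}(2) shows $\bar\iota\colon\mathbb{F}\langle\mathcal{X}\rangle\to U$ is onto; a surjective continuous $*$-homomorphism of pro-$C^{*}$-algebras is open, so $\bar\iota$ descends to an isomorphism $\mathbb{F}\langle\mathcal{X}\rangle/I\cong U$ with $I=\ker\bar\iota$.

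Part (3) reduces to showing $I$ is the closed ideal generated by a single element $g$: then a continuous $*$-homomorphism out of $\mathbb{F}\langle\mathcal{X}\rangle$ annihilates $g$ exactly when it annihilates $I$, so (using that $g(x_{1},\dots,x_{n})=0$ is a closed $C^{*}$-relation by Theorem~\ref{thm:freeElementsMakeClosedRelations}, together with the naturality of Definition~\ref{def:NCfunction}) the universal pro-$C^{*}$-algebra for $g(x_{1},\dots,x_{n})=0$ is $\mathbb{F}\langle\mathcal{X}\rangle/I\cong U$. Let $I_{k}$ be the image of $I$ in $F_{k}\langle\mathcal{X}\rangle$, a closed ideal of the separable $C^{*}$-algebra $F_{k}\langle\mathcal{X}\rangle$ (images of closed ideals under surjective $*$-homomorphisms of $C^{*}$-algebras are closed); one checks $I=\lim_{\longleftarrow}I_{k}$ with surjective bonding maps, so $I$ is a $\sigma$-$C^{*}$-algebra assembled from separable pieces. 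I would then build a full positive $g\in I$ inductively: take $g_{1}\in I_{1}$ a strictly positive contraction, and given coherent strictly positive contractions $g_{1},\dots,g_{k}$, lift $g_{k}$ to a positive contraction $h\in I_{k+1}$ and put $g_{k+1}=h+(1-h)^{1/2}\ell(1-h)^{1/2}$ with $\ell$ strictly positive in the kernel of $I_{k+1}\to I_{k}$; one verifies $g_{k+1}$ is again a strictly positive contraction mapping to $g_{k}$ by testing against states that do, and do not, annihilate that kernel (using $1-\sqrt{1-t}\leq t$ on $[0,1]$). As $g=\langle g_{k}\rangle$ then has each image full in $I_{k}$, the closed ideal of $\mathbb{F}\langle\mathcal{X}\rangle$ it generates has image $I_{k}$ in every $F_{k}\langle\mathcal{X}\rangle$, hence equals $I$.

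The hardest part is that last construction: the generator $g$ must be full in every quotient $I_{k}$ while staying coherent under the bonding maps, which is exactly what forces the $\sigma$-unitality-type induction above. Everything else is bookkeeping around Theorems~\ref{thm:repsIFFuniversalExists}, \ref{thm:inverseLimitsOfPushouts} and \ref{thm:freeElementsMakeClosedRelations}.
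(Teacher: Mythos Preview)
Your arguments for (1) and (2) match the paper's essentially line for line: the same truncation by the norm bounds $\|x_{i}\|\leq k$, the same pushout squares, and the same appeal to Theorem~\ref{thm:inverseLimitsOfPushouts} for surjectivity of $\bar\iota$. Where you invoke the open mapping theorem for Fr\'echet spaces, the paper cites \cite[Corollary 5.4]{PhillipsInverseLimits}; these amount to the same thing in the $\sigma$-$C^{*}$ setting.

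For (3) your route diverges. The paper argues far more directly: since $\mathbb{F}\langle\mathcal{X}\rangle$ is separable (countable inverse limit of separable $C^{*}$-algebras), one replaces $I$ by a dense sequence $g_{k}$, passes to $g_{k}^{*}g_{k}$ to make them positive, chooses scalars $\alpha_{k}>0$ small enough that $\sum_{k}\alpha_{k}g_{k}$ converges in every defining seminorm, and sets $g=\sum_{k}\alpha_{k}g_{k}$. Then $g$ generates the same closed ideal as the $g_{k}$ because each $g_{k}\leq\alpha_{k}^{-1}g$. This is a two-line trick borrowed from \cite{EilersLoringContingenciesStableRelations} and \cite{HadwinContinuousFunctions}, and it avoids any level-by-level construction. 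Your inductive $\sigma$-unitality argument---lifting $g_{k}$ to $h$ and correcting by $(1-h)^{1/2}\ell(1-h)^{1/2}$ with $\ell$ strictly positive in the kernel---is correct (the state analysis you sketch goes through, using $0\leq 1-(1-h)^{1/2}\leq h$ to show that $\sigma(h)=0$ forces $\sigma((1-h)^{1/2}\ell(1-h)^{1/2})=\sigma(\ell)$), and it has the virtue of producing a strictly positive $g\in I$ rather than merely a full one; but it is considerably more work, and the verification that the closed ideal generated by $g$ equals $I$ still passes through the same inverse-limit bookkeeping you set up. The paper's weighted-sum argument is the cleaner way to finish.
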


\begin{proof}
Let $\mathcal{S}_{n}$ denote the $C^{*}$-relations
\[
\| x\|\leq n\quad(\forall x\in\mathcal{X}).
\]
Then $\mathcal{S}_{n}$ and $\mathcal{S}_{n}\cap\mathcal{R}$ are
compact. We get a commutative diagram 
\[
\xymatrix{
	C^* \left \langle 
	\mathcal {X}
	\left |
	\mathcal {S} _{n+1}
\right. \right \rangle 
	\ar[r] \ar[d]
	&
	C^* \left \langle 
	\mathcal {X}
	\left |
	\mathcal {S} _{n+1} \cap \mathcal {R} 
	\right. \right \rangle 
	\ar[d]
\\
	C^* \left \langle 
	\mathcal {X}
	\left |
	\mathcal {S} _{n}
	\right. \right \rangle 
	\ar[r]
	&
	C^* \left \langle 
	\mathcal {X}
	\left |
	\mathcal {S} _{n} \cap \mathcal {R} 
	\right. \right \rangle 
}
\]
where
all the maps are induced by the identity on the generators. This is
clearly a pushout with surjective $*$-homomorphisms. Let $\mathcal{U}$
be the $\sigma$-$C^{*}$-algebra
\[
U = 
\lim_{\longleftarrow}
C^{*}\left\langle
\mathcal{X}\left|\mathcal{S}_{n}\cap\mathcal{R}\right.
\right\rangle 
\]
and let $\iota:\mathcal{X}\rightarrow U$ denote the limit of the
$\iota_{n}=\iota_{\mathcal{S}_{n}\cap\mathcal{R}}.$ 
Theorem~\ref{thm:inverseLimitsOfPushouts}
applies, telling us that 
$\bar{\iota} :
\mathbb{F}\langle\mathcal{X}\rangle\rightarrow U$
is onto.

Suppose $A$ is a $C^{*}$-algebra and $\varphi:U\rightarrow A$ is
a continuous $*$-homomorphism. By 
Lemma~\ref{lem:mapsProCstarToCstarFactor},
for some $n$ there is a $*$-homomorphism 
\[
\bar{\varphi} :
C^{*}\left\langle \mathcal{X}\left|\mathcal{S}_{n}\cap R\right.\right\rangle 
\rightarrow A
\]
so that $\varphi=\bar{\varphi}\circ\rho_{n}.$ 
This means that $\varphi\circ\iota=\bar{\varphi}\circ\iota_{n}$
is a representation of $\mathcal{R}.$ 

Given a $C^{*}$-algebra $A$ and a representation 
$f:\mathcal{X}\rightarrow A,$
for some $n$ we have $\| f(x)\|\leq n$ for all $x$ in $\mathcal{X}$
and so have a $*$-homomorphism
\[
\varphi_{n} :
C^{*}\left\langle 
\mathcal{X}\left|\mathcal{S}_{n}\cap R\right.
\right\rangle
\rightarrow A
\]
for which 
$f=\varphi_{n}\circ\iota_{\mathcal{S}_{n}\cap R}.$ Therefore
$f=\left(\varphi_{n}\circ\rho_{n}\right)\circ\iota.$
Uniqueness follows
since $\iota(\mathcal{X})$ generates $U.$ 

By \cite[Corollary 5.4]{PhillipsInverseLimits} 
we have an isomorphism
$U\cong\mathbb{F}\langle\mathcal{X}\rangle/I$ for 
$I=\ker(\bar{\iota}).$

To prove (3) we modify a technique from 
\cite[Theorem 2.1]{EilersLoringContingenciesStableRelations}
and \cite[Proposition 41]{HadwinContinuousFunctions}. Certainly
\[
U \cong
\textnormal{pro}C^{*}\left\langle
\mathcal{X}\left|\ g(x_{1},\ldots,x_{n})=0\ (\forall g\in I)\right.
\right\rangle .
\]
By the separability of 
$\mathcal{F}\langle\mathcal{X}\rangle$ we
may replace all the elements of $I$ with a sequence so that
\[
U \cong 
\textnormal{pro}C^{*}\left\langle 
\mathcal{X}\left|\ g_{k}(x_{1},\ldots,x_{n})=0\ (\forall k\in\mathbb{N})\right.
\right\rangle .
\]
The fact that $y^{*}y=0$ in a $C^{*}$-algebra if and only if $y=0$
allows us to replace the $g_{k}$ as needed to ensure the $g_{k}$
are positive elements in $I.$ Let $p_{n}$ be a sequence 
of $C^{*}$-seminorms
defining the topology on $I.$ Taking a sequence of positive scalars
$\alpha_{k}$ so that $\alpha_{k}\leq\left(2^{k}p_{r}(g_{k})\right)^{-1}$
for $1\leq r\leq k$ we can ensure that $g=\sum\alpha_{k}g_{k}$ exists,
and then 
\[
U \cong
\textnormal{pro}C^{*}
\left\langle \mathcal{X}\left|g(x_{1},\ldots,x_{n})=0\right.
\right\rangle .
\]

\end{proof}

\section{Matrices having $C^{*}$-Relations}

In studying the boundary maps in $K$-theory 
(\cite{LoringProjectiveKtheory,LoringProjCstarAndBoundary})
we proved the projectivity of the $C^{*}$-algebras

\begin{equation}
C^{*}\left\langle h,k,x
\,\left|\,
P^{2}=P^{*}=P\mbox{ for }P=\left[\begin{array}{cc}
\mathbb{1}-h & x^{*}\\
x & k\end{array}\right]\right.\right\rangle \label{eq:matrixProjection}
\end{equation}
and
\begin{equation}
C^{*}\left\langle
h,k,x
\,\left|\,
hk=0\mbox{ and }0\leq P\leq1\mbox{ for }P=\left[\begin{array}{cc}
\mathbb{1}-h & x^{*}\\
x & k\end{array}\right]\right.
\right\rangle 
\label{eq:matrixPositiveFancy}
\end{equation}
and, implicitly at least, also
\begin{equation}
C^{*}\left\langle
h,k,x
\,\left|\,
0\leq P\leq1\mbox{ for }P=\left[\begin{array}{cc}
\mathbb{1}-h & x^{*}\\
x & k\end{array}\right]\right.
\right\rangle .
\label{eq:MatrixPositivePlain}
\end{equation}
 We use $\tilde {A}$ to denote
the unitization of a $C^*$-algebra $A,$ where the unit $\mathbb{1}$
is adjoined even
when $A$ is unital. It may not be obvious
these $C^{*}$-algebras exist. They do, and
there is a general method to reinterpret $C^{*}$-relations
in $\mathbf{M}_{n}(\tilde{B})$
as $C^{*}$-relations in $B.$

We are adding a chapter to an old story whose beginnings 
include \cite{Brown-free_product}
by Bergman and \cite{BergmanCoproducts} by Larry Brown. In the nonunital
case, we cannot use a trick with free products and relative commutants.
We must face the universal nonsense. 

In this section $n$ is a positive integer. 

\begin{notation}
Let $\overline{n}=\{1,2,\ldots,n\}.$ 
\end{notation}
\begin{defn}
\label{def:ShiftedComatrixRelations} Suppose $\mathcal{R}$ is a
$C^{*}$-relation on $\mathcal{X}$ and that 
$\alpha:\mathcal{X}\rightarrow\mathbf{M}_{n}(\mathbb{C})$
is a representation of $\mathcal{R}.$ Define $\mathcal{R}_{\alpha}$
as the full subcategory of
$\mathcal{F}_{\mathcal{X}\times\overline{n}\times\overline{n}}$
whose objects are the functions
\[
f:\mathcal{X}\times\overline{n}\times\overline{n}\rightarrow B
\]
for which 
$f_{\alpha} :\mathcal{X}\rightarrow\mathbf{M}_{n}(\widetilde{B})$
is a representation of $\mathcal{R},$ where
\[
f_{\alpha}(x) =
\sum_{i,j}\left(\alpha_{ij}\mathbb{1}+f(x,i,j)\right)\otimes e_{ij}.
\]

\end{defn}
For example, in (\ref{eq:matrixProjection}) $\mathcal{R}$ is the
relation $p^{2}=p^{*}=p$ and 
\[
\alpha(p)=\left[\begin{array}{cc}
1 & 0\\
0 & 0\end{array}\right].
\]
The generator $(p,1,2)$ is redundant, 
$h= -(p,1,1),$ $k= (p,2,2)$ and
$x= (p,2,1).$

\begin{lem}
With $\mathcal{R}$ and $\alpha$ as 
in Definition~\ref{def:ShiftedComatrixRelations},
$\mathcal{R}_{\alpha}$ is a $C^{*}$-relation 
on $\mathcal{X}\times\overline{n}\times\overline{n}.$
It is compact when $\mathcal{R}$ is compact. 
It is closed when $\mathcal{R}$
is closed. 
\end{lem}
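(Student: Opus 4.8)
The plan is to verify the four axioms \textbf{C1}--\textbf{C4f} for $\mathcal{R}_\alpha$ directly, exploiting the fact that the passage $f \mapsto f_\alpha$ is built out of an amplification to matrices and a unitization, both of which are functorial and behave well with respect to injections, quotients, and products. First I would set up notation: for a $C^*$-algebra $B$, write $u_B\colon B \hookrightarrow \widetilde B$ for the canonical inclusion and, for any $*$-homomorphism $\psi\colon B \to C$, let $\widetilde\psi\colon \widetilde B \to \widetilde C$ be its unital extension and $\mathbf{M}_n(\widetilde\psi)\colon \mathbf{M}_n(\widetilde B) \to \mathbf{M}_n(\widetilde C)$ the induced map. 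The single key identity I would isolate at the outset is the naturality relation $\mathbf{M}_n(\widetilde\psi)\circ f_\alpha = (\psi\circ f)_\alpha$ for any object $f$ of $\mathcal{R}_\alpha$ and any $*$-homomorphism $\psi\colon B \to C$; this follows immediately from the formula $f_\alpha(x)=\sum_{i,j}(\alpha_{ij}\mathbb{1}+f(x,i,j))\otimes e_{ij}$ since $\widetilde\psi$ fixes the adjoined unit and sends $f(x,i,j)$ to $\psi(f(x,i,j))$.

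With that identity in hand the axioms fall out. For \textbf{C1}: the zero function $\mathcal{X}\times\overline n\times\overline n \to \{0\}$ has $f_\alpha(x)=\sum_{i,j}\alpha_{ij}\mathbb{1}\otimes e_{ij}=\alpha(x)$ viewed in $\mathbf{M}_n(\widetilde{\{0\}})=\mathbf{M}_n(\mathbb{C})$, which is a representation of $\mathcal{R}$ by hypothesis on $\alpha$. For \textbf{C3}: if $f$ is an object and $\psi\colon B\to C$ is a $*$-homomorphism, then $(\psi\circ f)_\alpha = \mathbf{M}_n(\widetilde\psi)\circ f_\alpha$; since $f_\alpha$ is a representation of $\mathcal{R}$ and $\mathbf{M}_n(\widetilde\psi)$ is a $*$-homomorphism, \textbf{C3} for $\mathcal{R}$ gives that $(\psi\circ f)_\alpha$ is a representation, so $\psi\circ f$ is an object of $\mathcal{R}_\alpha$. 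For \textbf{C2}: if $\psi\colon B\hookrightarrow C$ is injective then $\widetilde\psi$ is injective (the unitization of an injection is an injection) and hence $\mathbf{M}_n(\widetilde\psi)$ is injective; so if $(\psi\circ f)_\alpha=\mathbf{M}_n(\widetilde\psi)\circ f_\alpha$ is a representation of $\mathcal{R}$, then \textbf{C2} for $\mathcal{R}$ shows $f_\alpha$ is a representation, i.e.\ $f$ is an object of $\mathcal{R}_\alpha$. For \textbf{C4f} (and, with the evident change, \textbf{C4} and \textbf{C4b}): given objects $f_j\colon \mathcal{X}\times\overline n\times\overline n \to B_j$, one uses the natural embedding $\mathbf{M}_n\bigl(\widetilde{\bigl(\prod_j B_j\bigr)}\bigr) \hookrightarrow \prod_j \mathbf{M}_n(\widetilde{B_j})$ under which $\bigl(\prod_j f_j\bigr)_\alpha$ corresponds to the family $\langle (f_j)_\alpha\rangle_j$; since each $(f_j)_\alpha$ is a representation of $\mathcal{R}$, axiom \textbf{C4f} (resp.\ \textbf{C4}, \textbf{C4b}) for $\mathcal{R}$ produces a representation of $\mathcal{R}$ on the product, and then \textbf{C2} for $\mathcal{R}$ (applied to the embedding just mentioned) pulls this back to show $\bigl(\prod_j f_j\bigr)_\alpha$ is a representation.

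For the last two sentences of the statement, the compact and closed cases, the argument is the same as the \textbf{C4f} case but with $\Lambda$ an arbitrary nonempty set (compact) or an arbitrary bounded family (closed); in the bounded case one notes that if $\sup_\lambda \|f_\lambda(x,i,j)\| < \infty$ for all $(x,i,j)$ then $\langle (f_\lambda)_\alpha\rangle_\lambda$ is again a bounded family of functions into the $\mathbf{M}_n(\widetilde{B_\lambda})$, because the entries of $(f_\lambda)_\alpha(x)$ are the fixed scalars $\alpha_{ij}$ plus the $f_\lambda(x,i,j)$, so the norms of $(f_\lambda)_\alpha(x)$ are uniformly bounded. Then one closes with the same embedding-plus-\textbf{C2} step. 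The main obstacle, and the only place requiring genuine care, is verifying that the unitization functor $B\mapsto\widetilde B$ carries injections to injections and interacts correctly with finite and arbitrary products — in particular pinning down the canonical map $\widetilde{\prod_j B_j}\to\prod_j\widetilde{B_j}$ and checking it is an (isometric) embedding, so that the amalgamated unit in $\widetilde{\prod_j B_j}$ is identified with the diagonal family of units; once this bookkeeping about unitizations and amplifications is settled, every axiom is a one-line consequence of the corresponding axiom for $\mathcal{R}$ together with the naturality identity.
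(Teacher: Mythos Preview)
Your proposal is correct and follows essentially the same approach as the paper: both hinge on the naturality identity $(\psi\circ f)_\alpha = \mathbf{M}_n(\widetilde\psi)\circ f_\alpha$ for \textbf{C2} and \textbf{C3}, and on the injective $*$-homomorphism $\mathbf{M}_n\bigl(\widetilde{\prod_\lambda B_\lambda}\bigr)\hookrightarrow \prod_\lambda \mathbf{M}_n(\widetilde{B_\lambda})$ together with \textbf{C2} for $\mathcal{R}$ to handle the product axioms. Your write-up is slightly more explicit about invoking \textbf{C2} in the product step and about the bookkeeping for unitizations, but the substance is identical.
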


\begin{proof}
Suppose first that $\mathcal{R}$ is any $C^{*}$-relation 
on $\mathcal{X}.$

If $f$ is the zero map 
\[
f :
\mathcal{X}\times\overline{n}\times\overline{n}\rightarrow\{0\}
\]
then $f_{\alpha}=\alpha$ is a representation of $\mathcal{R}$ so
$f$ is a representation of $\mathcal{R}_{\alpha}.$

Suppose $\varphi:A\rightarrow B$ is an injective $*$-homomorphism
and 
\[
\varphi\circ f: 
\mathcal{X}\times\overline{n}\times\overline{n}\rightarrow B
\]
is a representation in $\mathcal{R}_{\alpha}.$ 
Then 
$\mathbf{M}_{n}\left(\widetilde{\varphi}\right) =
\widetilde{\varphi}\otimes\mbox{id}$
is also an injective $*$-homomorphism,
\[
\mathbf{M}_{n}\left(\widetilde{\varphi}\right) :
\mathbf{M}_{n}
\left(\widetilde{A}\right)\rightarrow\mathbf{M}_{n}\left(\widetilde{B}\right)
\]
and
\begin{align*}
(\varphi\circ f)_{\alpha}(x) 
& =  
\sum_{i,j}\left(\alpha_{ij}\mathbb{1}+\varphi(f(x,i,j))\right)\otimes e_{ij}\\
 & =  
 \mathbf{M}_{n}
 \left(\widetilde{\varphi}\right)
 \left(\sum_{i,j}\left(\alpha_{ij}\mathbb{1}+f(x,i,j)\right)
 \otimes e_{ij}\right)\\
 & =  
 \left(\mathbf{M}_{n}
 \left(\widetilde{\varphi}\right)\circ f_{\alpha}\right)(x).
\end{align*}
Since $\varphi\circ f$ is a representation of $\mathcal{R}$, we
know $\mathbf{M}_{n}\left(\widetilde{\varphi}\right)\circ f_{\alpha}$
is a representation of $\mathcal{R}_{\alpha}.$ Therefore $f_{\alpha}$
is a representation of $\mathcal{R}_{\alpha}$ and so $f$ is a representation
of $\mathcal{R}.$

Suppose $\varphi:A\rightarrow B$ is a $*$-homomorphism and 
\[
f:\mathcal{X}\times\overline{n}\times\overline{n}\rightarrow A
\]
is a representation in $\mathcal{R}_{\alpha}.$ Then we still have
that $\mathbf{M}_{n}\left(\widetilde{\varphi}\right)$ is a 
$*$-homomorphism
and
\[
(\varphi\circ f)_{\alpha} =
\mathbf{M}_{n}\left(\widetilde{\varphi}\right)\circ f_{\alpha}.
\]
Since $f$ is a representation, so is $f_{\alpha}$. Therefore 
$(\varphi\circ f)_{\alpha}$
is a representation, and so $\varphi\circ f$ is a representation.

Now suppose 
\[
f_{\lambda} : 
\mathcal{X}\times\overline{n}\times\overline{n}\rightarrow A_{\lambda}
\]
is a representation for each $\lambda$ in a nonempty, finite set
$\Lambda.$ Each $\left(f_{\lambda}\right)_{\alpha}$ is a representation.
Let 
\[
\Phi :
\mathbf{M}_{n}
\left(\left(\prod_{\lambda}A_{\lambda}\right)^{\sim}\right)
\rightarrow
\prod_{\lambda}\mathbf{M}_{n}\left(\widetilde{A_{\lambda}}\right)
\]
be the injective $*$-homomorphism defined by
\[
\Phi\left(
\left(\beta\mathbb{1}+\left\langle a_{\lambda}\right\rangle _{\lambda}\right)
\otimes e_{ij}
\right)
=
\left\langle \left(\beta\mathbb{1}+a_{\lambda}\right)
\otimes e_{ij}\right\rangle _{\lambda}.
\]
Since
\[
\Phi\circ\left(\prod_{\lambda}f_{\lambda}\right)_{\alpha}
=\prod_{\lambda}\left(f_{\lambda}\right)_{\alpha}
\]
we know that
\[
\left(\prod_{\lambda}f_{\lambda}\right)_{\alpha}
\]
is a representation. This means $\prod_{\lambda}f_{\lambda}$ is a
representation.

If $\mathcal{R}$ is compact, then the above argument works for infinite
sets $\Lambda.$ If $\mathcal{R}$ is only closed, we need to add
the assumptions
\[
\sup_{\lambda}\left\Vert f_{\lambda}(x,i,j)\right\Vert <\infty
\]
for each $x$ and each $i$ and $j.$ This forces, for each $x,$
\[
\sup_{\lambda}\left\Vert \left(f_{\lambda}\right)_{\alpha}(x)\right\Vert 
=
\sup_{\lambda}
\left\Vert \sum_{i,j}\left(\alpha_{ij}\mathbb{1}
+
f_{\lambda}(x,i,j)\right)\otimes e_{ij}\right\Vert 
<\infty
\]
and the above argument is still fine.
\end{proof}

This is helpful even when $n$ is $1.$ For example there is 
\[
C_{0}(0,1)=C^{*}\left\langle x\left|\strut(\mathbb{1}+x)^{*} =
(\mathbb{1}+x)^{-1}\right.\right\rangle .
\]
For an example that does not produce a $C^{*}$-algebra, there is
\[
\textnormal{pro}C^{*}\left\langle a,b,c,d
\,\left|\,
P^{2} =
P\mbox{ for }P=\left[\begin{array}{cc}
\mathbb{1}+a & b\\
c & d\end{array}\right]\right.\right\rangle .
\]
In these two examples it is easy rewrite the relations as $*$-polynomials
not involving matrices. Such a reduction is not always practical,
as illustrated by
\[
C^{*}\left\langle
a,b,c,d
\,\left|\,
0\leq P\leq1\mbox{ for }P=\left[\begin{array}{cc}
a & b\\
c & d\end{array}\right]\right.
\right\rangle .
\]

Define 
$\lambda :
\widetilde{A}\rightarrow\mathbb{C}$ by $\lambda(\alpha\mathbb{1}+a)=\alpha.$

\begin{defn}
If $A$ is a $C^{*}$-algebras and 
$\alpha:A\rightarrow\mathbf{M}_{n}(\mathbb{C})$
is a $*$-homomorphism, define $\mathbf{W}_{\alpha}(A)$ as 
\[
C^{*}\left\langle 
A\times\overline{n}\times\overline{n}
\,\left|\,\strut
a\mapsto\left[\strut\alpha_{ij}\mathbb{1}+(a,i,j)\right]
\right.
\mbox{ is a $*$-homomorphism}
\right\rangle .
\]
\end{defn}

That is, $\mathbf{W}_{\alpha}(A)$ has a $*$-homomorphism
$\iota:A\rightarrow\mathbf{M}_{n}(\widetilde{A})$
so that $\mathbf{M}_{n}(\lambda)\circ\iota=\alpha$ that is universal
for all $*$-homomorphism $\iota:A\rightarrow\mathbf{M}_{n}(\widetilde{B})$
such that $\mathbf{M}_{n}(\lambda)\circ\iota=\alpha.$ If $\alpha=0$
then
\[
\hom(\mathbf{W}_{\alpha}(A),B)\cong\hom(A,\mathbf{M}_{n}(B))
\]
 and $\mathbf{W}_{\alpha}=\mathbf{W}_{n}$ is the left-adjoint to
the functor $\mathbf{M}_{n}$ and was investigated by Phillips in
\cite{PhillipsClassifyingAlgebras}.

\begin{thm}
\label{thm:coMatrixStaysProjective}If $A$ is projective and 
$\alpha:A\rightarrow\mathbf{M}_{n}(\mathbb{C})$
is a representation then $\mathbf{W}_{\alpha}(A)$ is projective.
\end{thm}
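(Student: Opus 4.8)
The plan is to use the universal property of $\mathbf{W}_{\alpha}(A)$ to recast its projectivity as a lifting problem for $A$, and then to observe that the ``scalar part'' side condition is automatically preserved. Recall that $\mathbf{W}_{\alpha}(A)$ comes with a $*$-homomorphism $\iota:A\rightarrow\mathbf{M}_{n}(\widetilde{\mathbf{W}_{\alpha}(A)})$ satisfying $\mathbf{M}_{n}(\lambda)\circ\iota=\alpha,$ and that $g\mapsto\mathbf{M}_{n}(\widetilde{g})\circ\iota$ is a bijection from $\hom(\mathbf{W}_{\alpha}(A),B)$ onto the set of $*$-homomorphisms $\kappa:A\rightarrow\mathbf{M}_{n}(\widetilde{B})$ with $\mathbf{M}_{n}(\lambda)\circ\kappa=\alpha.$ Since $B\mapsto\mathbf{M}_{n}(\widetilde{B})$ is a functor, this correspondence is natural in $B;$ moreover $\lambda\circ\widetilde{\varphi}=\lambda$ for every $*$-homomorphism $\varphi,$ so this functor intertwines the scalar-evaluation maps.

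Now suppose $\pi:B\rightarrow C$ is a surjective $*$-homomorphism and $\Phi:\mathbf{W}_{\alpha}(A)\rightarrow C$ is a $*$-homomorphism. Set $\beta=\mathbf{M}_{n}(\widetilde{\Phi})\circ\iota:A\rightarrow\mathbf{M}_{n}(\widetilde{C}),$ so that $\mathbf{M}_{n}(\lambda)\circ\beta=\alpha.$ Since $\pi$ is onto, so is $\widetilde{\pi}:\widetilde{B}\rightarrow\widetilde{C},$ hence so is $\mathbf{M}_{n}(\widetilde{\pi}):\mathbf{M}_{n}(\widetilde{B})\rightarrow\mathbf{M}_{n}(\widetilde{C}).$ Because $A$ is projective, there is a $*$-homomorphism $\gamma:A\rightarrow\mathbf{M}_{n}(\widetilde{B})$ with $\mathbf{M}_{n}(\widetilde{\pi})\circ\gamma=\beta.$ The crucial observation is that $\gamma$ automatically has the correct scalar part:
\[
\mathbf{M}_{n}(\lambda)\circ\gamma
=\mathbf{M}_{n}(\lambda)\circ\mathbf{M}_{n}(\widetilde{\pi})\circ\gamma
=\mathbf{M}_{n}(\lambda)\circ\beta=\alpha .
\]
Hence $\gamma$ corresponds to a $*$-homomorphism $\Psi:\mathbf{W}_{\alpha}(A)\rightarrow B,$ that is, $\mathbf{M}_{n}(\widetilde{\Psi})\circ\iota=\gamma.$ Then $\mathbf{M}_{n}(\widetilde{\pi\circ\Psi})\circ\iota=\mathbf{M}_{n}(\widetilde{\pi})\circ\gamma=\beta=\mathbf{M}_{n}(\widetilde{\Phi})\circ\iota,$ and by the uniqueness half of the universal property $\pi\circ\Psi=\Phi.$ Thus $\mathbf{W}_{\alpha}(A)$ is projective.

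I do not expect a real obstacle: the heart of the matter is the one-line remark that $\mathbf{M}_{n}(\widetilde{\pi})\circ\gamma=\beta$ forces $\mathbf{M}_{n}(\lambda)\circ\gamma=\alpha,$ because $\widetilde{\pi}$ fixes the adjoined unit. The only care needed is routine bookkeeping with the functor $B\mapsto\mathbf{M}_{n}(\widetilde{B})$, namely that it carries surjections to surjections and commutes with the maps $\lambda$, together with the fact that $\mathbf{W}_{\alpha}(A)$ exists as a $C^{*}$-algebra, which holds since its defining relation is compact. If projectivity is understood in the separable setting, one adds the trivial remarks that $\mathbf{W}_{\alpha}(A)$ is separable whenever $A$ is, and that $\mathbf{M}_{n}(\widetilde{B})$ is separable whenever $B$ is.
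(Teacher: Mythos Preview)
Your proof is correct and follows essentially the same strategy as the paper: translate the lifting problem for $\mathbf{W}_{\alpha}(A)$ into a lifting problem for $A$ along the surjection $\mathbf{M}_{n}(\widetilde{\pi})$, apply projectivity of $A$, and note that the scalar-part condition $\mathbf{M}_{n}(\lambda)\circ\gamma=\alpha$ comes for free since $\lambda\circ\widetilde{\pi}=\lambda$. The paper presents this as two commutative diagrams and the phrase ``easily solved''; you have simply written out the details that the paper leaves implicit.
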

\begin{proof}
Suppose we have a diagram 
\[
\xymatrix{
	&	B \ar@{->>}[d] ^{\rho} \\
\mathbf {W}_{\alpha}(A) \ar[r] ^(0.6){\varphi} \ar@{-->}[ru] ^{\bar {\varphi}}
	&  D 
}
\]
in which $\rho$ is
surjective, $\varphi$ is given and we want to find $\bar{\varphi}$
making the diagram commute. This translates to the lifting problem
\[
\xymatrix{
	&	\mathbf {M}_{n}(\widetilde {B}) 
		\ar@{->>}[d] \ar@/^0.5cm/[rdd]^{ \mathbf {M}_{n}(\lambda)} \\
A \ar[r] \ar@/_0.5cm/[drr] ^{\alpha} \ar@{-->}[ru] 
	&  \mathbf {M}_{n}(\widetilde {D})  
		\ar[rd]^(0.4){ \mathbf {M}_{n}(\lambda)} \\
		&& \mathbf {M}_{n}(\mathbb{C})
}
\]
which is easily solved.
\end{proof}

For example, Theorem~\ref{thm:coMatrixStaysProjective} tells us
that
\[
C^{*}\left\langle a,b,c,d
\,\left|\,
\left\Vert \left[\begin{array}{cc}
a & \mathbb{1}+b\\
c & d\end{array}\right]\right\Vert \leq1\right.\right\rangle 
\]
is projective.

\providecommand{\bysame}{\leavevmode\hbox to3em{\hrulefill}\thinspace}

\end{document}